\theoremstyle{plain}
\newtheorem{theorem}[subsection]{Theorem}
\newtheorem{proposition}[subsection]{Proposition}
\newtheorem{lemma}[subsection]{Lemma}
\theoremstyle{definition}
\newtheorem{definition}[subsection]{Definition}
\newtheorem{example}[subsection]{Example}
\newtheorem{examples}[subsection]{Examples}
\newtheorem{non-examples}[subsection]{Non-examples}
\newtheorem{remark}[subsection]{Remark}
\DeclareMathOperator{\colim}{colim}
\DeclareMathOperator{\Hom}{Hom}
\DeclareMathOperator{\coker}{coker}
\begin{document}

\title{Faithfully flat descent for projectivity of modules}
\author{Alexander Perry}
\maketitle

\section{Introduction}
\noindent
In this document we prove, following Raynaud and Gruson \cite{RG}, that the projectivity of modules descends along faithfully flat ring maps.  The idea of the proof is to use d\'evissage \`{a} la Kaplansky \cite{K} to reduce to the case of countably generated modules.  Given a well-behaved filtration of a module $M$, d\'evissage allows us to express $M$ as a direct sum of successive quotients of the filtering submodules (see Section \ref{section-devissage}).  Using this technique, we prove that a projective module is a direct sum of countably generated modules (Theorem \ref{theorem-projective-direct-sum}).  To prove descent of projectivity for countably generated modules, we introduce a ``Mittag-Leffler'' condition on modules, prove that a countably generated module is projective if and only if it is flat and Mittag-Leffler (Theorem \ref{theorem-projectivity-characterization}), and then show that the property of being a flat Mittag-Leffler module descends (Lemma \ref{lemma-ffdescent-flat-ML}).  Finally, given an arbitrary module $M$ whose base change by a faithfully flat ring map is projective, we filter $M$ by submodules whose successive quotients are countably generated projective modules, and then by d\'evissage conclude $M$ is a direct sum of projectives, hence projective itself (Theorem \ref{theorem-ffdescent-projectivity}).

One reason for the existence of this document is that there is an error in the proof of faithfully flat descent of projectivity in \cite{RG}.  There, descent of projectivity along faithfully flat ring maps is deduced from descent of projectivity along a more general type of ring map (\cite[Example 3.1.4(1) of Part II]{RG}).  However, the proof of descent along this more general type of map is incorrect, as explained in \cite{G}.  To patch this hole in the proof of faithfully flat descent of projectivity comes down to proving that the property of being a flat Mittag-Leffler module descends along faithfully flat ring maps.  We do this in Lemma \ref{lemma-ffdescent-flat-ML}.

\section{Notation and conventions}
\noindent
A ring is commutative with $1$.  We use $R$ and $S$ to denote rings, and $M$ and $N$ to denote modules.  

\section{Flat modules and universally injective module maps}

\noindent
In this section we first discuss criteria for flatness.  The main result in relation to this paper is Lazard's theorem (Theorem \ref{theorem-lazard} below), which says that a flat module is the colimit of a directed system of free finite modules.  Next we discuss universally injective module maps, which are in a sense complementary to flat modules (see Lemma \ref{lemma-flat-universally-injective}).  We follow Lazard's thesis \cite{Laz}; also see \cite{Lam}.

\subsection{Criteria for flatness}
\begin{definition}
Let $M$ be an $R$-module.  A relation $\sum_i a_i x_i = 0$ in $M$ where $x_i \in M, a_i \in R$ $(i =1, \dots n)$ is called \emph{trivial} if there exists $y_j \in M$ $(j = 1, \dots, m)$ and $b_{ij} \in R$ $(i = 1, \dots, n, j = 1, \dots, m)$ such that for all $i$
\[  x_i = \sum_{j} b_{ij} y_j \]
and for all $j$
\[  0   = \sum_{i} a_i b_{ij} .\]
Informally, a trivial relation in $M$ ``comes from'' relations in $R$.
\end{definition}

\begin{lemma}[Equational criterion for flatness]
\label{lemma-flat-eq}
Let $M$ be an $R$-module.  Then $M$ is flat if and only if every relation in $M$ is trivial. 
\end{lemma}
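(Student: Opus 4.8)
The plan is to turn each direction into a short computation with a single tensor product, relying on the standard characterization that $M$ is flat if and only if $I \otimes_R M \to M$ is injective for every finitely generated ideal $I \subseteq R$ (equivalently $\operatorname{Tor}_1^R(R/I, M) = 0$ for such $I$). That criterion is really the only input beyond bookkeeping.

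For the ``only if'' direction, suppose $M$ is flat and let $\sum_{i=1}^n a_i x_i = 0$ be a relation in $M$. I would form the $R$-linear map $\varphi \colon R^n \to R$ with $e_i \mapsto a_i$, set $K = \ker \varphi$, and note that $0 \to K \to R^n \xrightarrow{\varphi} R$ is exact. Tensoring with the flat module $M$ and identifying $R^n \otimes_R M$ with $M^n$ gives an exact sequence $0 \to K \otimes_R M \to M^n \to M$ whose last arrow sends $(x_1,\dots,x_n)$ to $\sum_i a_i x_i$. Since that sum vanishes, $(x_1,\dots,x_n)$ is the image of some $\sum_j k_j \otimes y_j \in K \otimes_R M$ with $k_j = (b_{1j},\dots,b_{nj}) \in K$ and $y_j \in M$. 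Reading off coordinates through the isomorphism $R^n \otimes_R M \cong M^n$ gives $x_i = \sum_j b_{ij} y_j$, while the membership $k_j \in \ker\varphi$ gives $\sum_i a_i b_{ij} = 0$ for each $j$ — precisely the data exhibiting the relation as trivial.

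For the ``if'' direction, assume every relation in $M$ is trivial and let $I = (a_1,\dots,a_n)$ be a finitely generated ideal; I want $I \otimes_R M \to M$ injective. Using the chosen generators, any element of $I \otimes_R M$ can be written $\xi = \sum_{i=1}^n a_i \otimes x_i$. If $\xi \mapsto 0$ in $M$, then $\sum_i a_i x_i = 0$, so by hypothesis there are $y_j \in M$ and $b_{ij} \in R$ with $x_i = \sum_j b_{ij} y_j$ and $\sum_i a_i b_{ij} = 0$. Substituting and using bilinearity inside $I \otimes_R M$ (note each $a_i b_{ij}$ lies in $I$),
\[ \xi = \sum_i a_i \otimes \Bigl(\sum_j b_{ij} y_j\Bigr) = \sum_j \Bigl(\sum_i a_i b_{ij}\Bigr) \otimes y_j = 0 . \]
Hence $I \otimes_R M \to M$ is injective for all finitely generated $I$, so $M$ is flat.

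I do not anticipate a serious obstacle: the heart of the argument is these two short diagram chases. The one place to be careful is the ``only if'' direction — correctly recognizing $K = \ker\varphi$ as the module of relations among the $a_i$, and tracking how an element $\sum_j k_j \otimes y_j$ of $K \otimes_R M$ lands in $M^n$ coordinate by coordinate. One should also confirm that the flatness criterion invoked (injectivity of $I \otimes_R M \to M$ for finitely generated ideals $I$) is available at this point in the development; if only the bare definition of flatness is at hand, one first reduces to it in the usual way, via $\operatorname{Tor}$ long exact sequences and the fact that filtered colimits of ideals/modules are exact.
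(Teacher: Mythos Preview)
Your argument is correct and is the standard proof of the equational criterion for flatness. The paper does not actually prove this lemma: it simply cites the Stacks Project (``This is \cite[Lemma 7.28.8]{stacks-project}''). What you have written is essentially the proof one finds upon following that reference, so there is no meaningful difference in approach to discuss---you have filled in what the paper outsourced. Your caveat about needing the ideal-theoretic criterion for flatness (injectivity of $I \otimes_R M \to M$ for finitely generated ideals $I$) is apt; that reduction is itself a standard lemma and is indeed available in the same source.
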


\begin{proof}
This is \cite[Lemma 7.28.8]{stacks-project}.
\end{proof}

\begin{lemma}
\label{lemma-flat-factors-free}
Let $M$ be an $R$-module.  The following are equivalent:
\begin{enumerate}
\item $M$ is flat.
\item If $f: R^n \rightarrow M$ is a module map and $x \in \ker(f)$, then there are module maps $h: R^n \rightarrow R^m$ and $g: R^m \rightarrow M$ such that $f = g \circ h$ and $x \in \ker(h)$.
\item Suppose $f: R^n \rightarrow M$ is a module map, $N \subset \ker(f)$ any submodule, and $h: R^n \rightarrow R^{m}$ a map such that $N \subset \ker(h)$ and $f$ factors through $h$.  Then given any $x \in \ker(f)$ we can find a map $h': R^n \rightarrow R^{m'}$ such that $N + Rx \subset \ker(h')$ and $f$ factors through $h'$. 
\item If $f: R^n \rightarrow M$ is a module map and $N \subset \ker(f)$ is a finitely generated submodule, then there are module maps $h: R^n \rightarrow R^m$ and $g: R^m \rightarrow M$ such that $f = g \circ h$ and $N \subset \ker(h)$.
\end{enumerate}
\end{lemma}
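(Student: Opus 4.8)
The plan is to establish the four-way equivalence by the cycle $(1) \Leftrightarrow (2)$, then $(2) \Rightarrow (3) \Rightarrow (4) \Rightarrow (2)$, using the equational criterion for flatness (Lemma \ref{lemma-flat-eq}) as the only external ingredient.

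For $(1) \Leftrightarrow (2)$ I would simply dictionary-translate between the two phrasings. Given a map $f \colon R^n \to M$, write $x_i = f(e_i)$ for the images of the standard basis vectors, so that an element $x = \sum_i a_i e_i$ lies in $\ker(f)$ precisely when $\sum_i a_i x_i = 0$ is a relation in $M$. A factorization $f = g \circ h$ through $R^m$ with $x \in \ker(h)$ is exactly the data of elements $y_j = g(e'_j) \in M$ and scalars $b_{ij} \in R$ (the coordinates of $h(e_i)$) with $x_i = \sum_j b_{ij} y_j$ for all $i$ and $\sum_i a_i b_{ij} = 0$ for all $j$; that is, it witnesses triviality of the relation $\sum_i a_i x_i = 0$. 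Hence $(2)$ applied to all such $f$ and $x$ is equivalent to every relation in $M$ being trivial, which by Lemma \ref{lemma-flat-eq} is $(1)$.

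For $(2) \Rightarrow (3)$ the key move is to feed the map $g$, not $f$, into hypothesis $(2)$. Given the data $f = g \circ h$ with $N \subset \ker(h)$ and an element $x \in \ker(f)$, note that $g(h(x)) = f(x) = 0$, so $h(x) \in \ker(g)$; applying $(2)$ to $g$ and $h(x)$ produces maps $h'' \colon R^m \to R^{m'}$ and $g' \colon R^{m'} \to M$ with $g = g' \circ h''$ and $h(x) \in \ker(h'')$. Then $h' := h'' \circ h$ works: $f = g' \circ h'$ exhibits the factorization, $h'(x) = h''(h(x)) = 0$, and $h'(N) = h''(h(N)) = \{0\}$ since $N \subset \ker(h)$, so $N + Rx \subset \ker(h')$. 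Next, $(3) \Rightarrow (4)$ is an induction on the number of generators of $N$: starting from $h = \mathrm{id}_{R^n}$ and $g = f$ (which handles $N = 0$), each application of $(3)$ with the current factorization and one further generator of $N$ enlarges the submodule annihilated by $h$, and after finitely many steps all of $N$ is killed. Finally $(4) \Rightarrow (2)$ is immediate upon taking $N = Rx$.

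I expect the only genuine idea to be the observation in $(2) \Rightarrow (3)$ that one applies the hypothesis to $g$ rather than to $f$; the $(1) \Leftrightarrow (2)$ step is pure index bookkeeping against Lemma \ref{lemma-flat-eq}, and $(3) \Rightarrow (4)$ is a routine finite induction. The one place to be slightly careful is the base of that induction, where $N$ may be the zero module, so that $(3)$ is invoked with the running submodule and a single new generator at each stage.
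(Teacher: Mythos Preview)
Your proposal is correct and follows essentially the same route as the paper: the paper also asserts that $(1)\Leftrightarrow(2)$ is a restatement of the equational criterion, proves $(2)\Rightarrow(3)$ by applying $(2)$ to $g$ and $h(x)$ and setting $h'=h''\circ h$, obtains $(3)\Rightarrow(4)$ by induction on the number of generators of $N$, and notes $(4)\Rightarrow(2)$ is clear. Your write-up simply supplies more detail at each step (the explicit dictionary for $(1)\Leftrightarrow(2)$ and the base case of the induction), but there is no substantive difference in approach.
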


\begin{proof}
That (1) is equivalent to (2) is just a reformulation of the equational criterion for flatness.   To show (2) implies (3), let $g: R^m \rightarrow M$ be the map such that $f$ factors as $f = g \circ h$.  By (2) find $h'': R^m \rightarrow R^{m'}$ such that $h''$ kills $h(x)$ and $g: R^m \rightarrow M$ factors through $h''$.  Then taking $h' = h'' \circ h$ works.  (3) implies (4) by induction on the number of generators of $N \subset \ker(f)$ in (4).  Clearly (4) implies (2).
\end{proof}

\begin{lemma}
\label{lemma-flat-factors-fp}
Let $M$ be an $R$-module.  Then $M$ is flat if and only if the following condition holds: if $P$ is a finitely presented $R$-module and $f: P \rightarrow M$ a module map, then there is a free finite $R$-module $F$ and module maps $h: P \rightarrow F$ and $g: F \rightarrow M$ such that $f = g \circ h$.
\end{lemma}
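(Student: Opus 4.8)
The plan is to deduce this from Lemma \ref{lemma-flat-factors-free}, which characterizes flatness via the ability to factor maps out of $R^n$ through a finite free module while killing a prescribed finitely generated submodule of the kernel. The forward direction is the substantive one: assume $M$ is flat, let $P$ be finitely presented, and let $f : P \to M$ be given. Since $P$ is finitely presented, choose a presentation $R^s \xrightarrow{\ \phi\ } R^n \xrightarrow{\ \pi\ } P \to 0$ with $\mathrm{im}(\phi)$ a finitely generated submodule of $R^n$. Composing, $f \circ \pi : R^n \to M$ is a module map whose kernel contains the finitely generated submodule $N := \mathrm{im}(\phi)$. By Lemma \ref{lemma-flat-factors-free}(4) there are maps $h_0 : R^n \to R^m$ and $g : R^m \to M$ with $f\circ\pi = g \circ h_0$ and $N \subset \ker(h_0)$. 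Since $h_0$ kills $N = \mathrm{im}(\phi) = \ker(\pi)$, it factors as $h_0 = h \circ \pi$ for a unique $h : P \to R^m$ with $F := R^m$ finite free, and then $g \circ h \circ \pi = f \circ \pi$; because $\pi$ is surjective this gives $g \circ h = f$, as desired.

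For the converse, suppose the factorization property holds; I will verify Lemma \ref{lemma-flat-factors-free}(4), which implies flatness. Given $f : R^n \to M$ and a finitely generated submodule $N \subset \ker(f)$, form $P := R^n/N$, which is finitely presented (quotient of a finite free module by a finitely generated submodule), and let $\bar f : P \to M$ be the induced map. By hypothesis $\bar f$ factors as $g \circ h_1$ with $h_1 : P \to F$, $F$ finite free, $g : F \to M$. Setting $h := h_1 \circ q$, where $q : R^n \to P$ is the projection, we get $f = g \circ h$ and $N \subset \ker(q) \subset \ker(h)$, which is exactly condition (4).

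I expect the main (though still mild) obstacle to be bookkeeping around finite presentation: one must make sure that in the forward direction the chosen presentation of $P$ has a \emph{finitely generated} relation submodule so that Lemma \ref{lemma-flat-factors-free}(4) applies, and in the converse direction that $R^n/N$ is genuinely finitely presented — both follow immediately from the definition of finitely presented once a finite set of generators of $N$ (resp. of the relations of $P$) is fixed. No delicate estimates or colimit arguments are needed; the whole content is the universal property of quotients together with the previously established equivalence.
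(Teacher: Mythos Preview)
Your proof is correct and is exactly the reformulation the paper has in mind: the paper's own proof simply says ``this is just a reformulation of condition (4) from the previous lemma,'' and your argument spells out precisely that translation via a finite presentation of $P$ in one direction and the quotient $R^n/N$ in the other.
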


\begin{proof}
This is just a reformulation of condition (4) from the previous lemma.
\end{proof}

\begin{lemma}
\label{lemma-flat-surjective-hom}
Let $M$ be an $R$-module.  Then $M$ is flat if and only if the following condition holds: for every finitely presented $R$-module $P$, if $N \rightarrow M$ is a surjective $R$-module map, then the induced map $\Hom_R(P,N) \rightarrow \Hom_R(P,M)$ is surjective.
\end{lemma}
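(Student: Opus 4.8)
The plan is to shuttle between this condition and the factorization criterion of Lemma \ref{lemma-flat-factors-fp}, using only that free finite modules are projective.

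For the forward implication, assume $M$ is flat. Given a finitely presented $P$, a surjection $\pi : N \to M$, and an arbitrary $f : P \to M$, apply Lemma \ref{lemma-flat-factors-fp} to write $f = g \circ h$ with $h : P \to F$, $g : F \to M$, and $F$ free and finite. Since $F$ is projective and $\pi$ is surjective, $g$ lifts to $\tilde g : F \to N$ with $\pi \circ \tilde g = g$; then $\tilde g \circ h : P \to N$ maps to $f$ under $\Hom_R(P,N) \to \Hom_R(P,M)$, so this map is surjective.

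For the converse, assume the stated condition and verify the criterion of Lemma \ref{lemma-flat-factors-fp}. Let $P$ be finitely presented and $f : P \to M$ any map. Choose a surjection $\pi : F' \to M$ with $F' = \bigoplus_{i \in I} R$ free. By hypothesis $f$ lifts to $\tilde f : P \to F'$, and since $P$ is finitely generated, $\tilde f(P)$ lies in the free finite submodule $F = \bigoplus_{i \in J} R$ for some finite $J \subseteq I$. Letting $h : P \to F$ be the corestriction of $\tilde f$ and $g : F \to M$ the composite $F \hookrightarrow F' \xrightarrow{\pi} M$, we get $f = g \circ h$ with $F$ free and finite. Hence $M$ is flat by Lemma \ref{lemma-flat-factors-fp}.

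I expect no serious obstacle here: the only points needing (minor) care are the lifting of $g$ along $\pi$ via projectivity of $F$ in the first direction, and the fact that the image of a finitely generated module in a free module lies in a finite-rank free direct summand in the second. One could instead run the converse with $N$ a finite free module surjecting onto a finite-presentation quotient, but the free-cover argument above is the cleanest route.
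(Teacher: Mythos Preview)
Your proof is correct and follows essentially the same approach as the paper: both directions are shuttled through Lemma~\ref{lemma-flat-factors-fp}, using projectivity of free finite modules for the forward implication and the fact that a finitely generated image in a free module lands in a finite-rank free summand for the converse.
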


\begin{proof}
First suppose $M$ is flat.  We must show that if $P$ is finitely presented, then given a map $f: P \rightarrow M$, it factors through the map $N \rightarrow M$.  By the previous lemma $f$ factors through a map $F \rightarrow M$ where $F$ is free and finite.  Since $F$ is free, this map factors through $N \rightarrow M$.  Thus $f$ factors through $N \rightarrow M$.

Conversely, suppose the condition of the lemma holds.  Let $f: P \rightarrow M$ be a map from a finitely presented module $P$.  Choose a free module $N$ with a surjection $N \rightarrow M$ onto $M$.  Then $f$ factors through $N \rightarrow M$, and since $P$ is finitely generated, $f$ factors through a free finite submodule of $N$.  Thus $M$ satisfies the condition of Lemma \ref{lemma-flat-factors-fp}, hence is flat.
\end{proof}

\begin{theorem}[Lazard's theorem]
\label{theorem-lazard}
Let $M$ be an $R$-module.  Then $M$ is flat if and only if it is the colimit of a directed system of free finite $R$-modules.
\end{theorem}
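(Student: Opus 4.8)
The plan is to prove the nontrivial direction: if $M$ is flat, then $M$ is a directed colimit of free finite $R$-modules. (The converse is immediate, since each $R^n$ is flat and a filtered colimit of flat modules is flat — this follows because tensor product and filtered colimits commute and filtered colimits are exact.) The standard approach is to build an explicit directed system indexed by a category whose objects are pairs $(F, u)$ with $F$ a free finite module and $u \colon F \to M$ a module map, with morphisms the obvious commuting triangles, and to show that the canonical map $\colim_{(F,u)} F \to M$ is an isomorphism. The real content is (a) checking this index category is filtered, and (b) checking surjectivity and injectivity of the canonical map.

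First I would set up the category $\mathcal{I}$: objects are pairs $(F,u)$, $F = R^n$, $u \in \Hom_R(R^n, M)$, and a morphism $(F,u) \to (F',u')$ is a map $\varphi \colon F \to F'$ with $u' \circ \varphi = u$. Surjectivity of $\colim F \to M$ is trivial: any $x \in M$ lies in the image of $(R, u_x)$ where $u_x(1) = x$. For filteredness I must show (i) any two objects map to a common one — take $(F \oplus F', u \oplus u')$ — and (ii) any two parallel morphisms $\varphi, \psi \colon (F,u) \to (F',u')$ are coequalized by some morphism out of $(F',u')$. This is exactly where flatness enters, via Lemma~\ref{lemma-flat-factors-free}(4): the map $\varphi - \psi \colon F \to F'$ has image landing in a finitely generated submodule $N \subseteq \ker(u')$ (it is finitely generated since $F$ is), so by flatness there is $h \colon F' \to R^m$ and $g \colon R^m \to M$ with $u' = g \circ h$ and $N \subseteq \ker(h)$; then $h$ is a morphism $(F',u') \to (R^m, g)$ with $h \circ \varphi = h \circ \psi$.

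For injectivity of $\colim_{(F,u)} F \to M$, suppose an element of the colimit maps to $0$; represent it by some $x \in F$ with $(F,u)$ an object and $u(x) = 0$. Then $Rx \subseteq \ker(u)$ is finitely generated, so again by Lemma~\ref{lemma-flat-factors-free}(4) there is a morphism $h \colon (F,u) \to (R^m, g)$ with $x \in \ker(h)$, i.e. $h(x) = 0$; hence $x$ already dies in the colimit. This shows the canonical map is injective, completing the proof that it is an isomorphism, so $M \cong \colim_{(F,u)} F$ is a directed colimit of free finite modules.

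The main obstacle is the filteredness verification — specifically condition (ii), coequalizing parallel arrows — since that is the step that genuinely uses flatness and requires the somewhat delicate factorization provided by Lemma~\ref{lemma-flat-factors-free}; the rest is bookkeeping with colimits. I would also take care to note at the outset that $\mathcal{I}$ is nonempty (the zero object $(0,0)$, or $(R,u)$ for any chosen element) and, if one wants an honest directed set rather than a filtered category, remark that a filtered colimit can always be rewritten as a colimit over a directed poset — though for the applications in this paper the filtered-category formulation suffices.
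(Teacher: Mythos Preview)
Your proof is correct but takes a different route from the paper. You build the filtered category $\mathcal{I}$ of all pairs $(F,u)$ with $F$ free finite and $u\colon F\to M$, and use flatness (via Lemma~\ref{lemma-flat-factors-free}(4)) to verify the coequalizing condition for filteredness and the injectivity of the colimit map. The paper instead first writes $M$ as a colimit over an explicit directed \emph{poset} $E$ of finitely presented modules $M_e = R^J/N$ (a construction valid for any module), and then uses flatness, via Lemma~\ref{lemma-flat-factors-fp}, to show that the $e$ with $M_e$ free are cofinal in $E$. Your approach is cleaner and more conceptual, and makes the role of flatness very transparent; its only cost is that the index is a filtered category rather than a directed poset, which (as you note) can be remedied by the standard cofinality argument if a poset is really wanted. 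The paper's approach has the minor advantage of producing a directed poset directly and of exhibiting along the way the general fact that every module is a directed colimit of finitely presented modules, but at the price of a more ad hoc indexing set and a somewhat fiddly cofinality step (the business with choosing $i_1,\dots,i_n \in I$ outside $J$).
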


\begin{proof}
A colimit of a directed system of flat modules is flat, as taking directed colimits is exact and commutes with tensor product. Hence if $M$ is the colimit of a directed system of free finite modules then $M$ is flat.

For the converse, first recall that any module $M$ can be written as the colimit of a directed system of finitely presented modules, in the following way.  Choose a surjection $f: R^I \rightarrow M$ for some set $I$, and let $K$ be the kernel.  Let $E$ be the set of ordered pairs $(J,N)$ where $J$ is a finite subset of $I$ and $N$ is a finitely generated submodule of $R^J \cap K$.  Then $E$ is made into a directed partially ordered set by defining $(J,N) \leq (J',N')$ if and only if $J \subset J'$ and $N \subset N'$.  Define $M_{e} = R^J/N$ for $e = (J,N)$, and define $f_{ee'}: M_{e} \rightarrow M_{e'}$ to be the natural map for $e \leq e'$.  Then $(M_e,f_{ee'})$ is a directed system and the natural maps $f_e: M_{e} \rightarrow M$ induce an isomorphism $\colim_{e \in E} M_{e} \xrightarrow{\cong} M$.

Now suppose $M$ is flat.  Let $I = M \times \mathbf{Z}$, write $(x_{i})$ for the canonical basis of $R^{I}$, and take in the above discussion $f: R^I \rightarrow M$ to be the map sending $x_i$ to the projection of $i$ onto $M$.  To prove the theorem it suffices to show that the $e \in E$ such that $M_{e}$ is free form a cofinal subset of $E$.  So let $e = (J,N) \in E$ be arbitrary.  By Lemma \ref{lemma-flat-factors-fp} there is a free finite module $F$ and maps $h: R^J/N \rightarrow F$ and $g: F \rightarrow M$ such that the natural map $f_{e}: R^J/N \rightarrow M$ factors as $R^J/N \xrightarrow{h} F \xrightarrow{g} M$.  We are going to realize $F$ as $M_{e'}$ for some $e' \geq e$.    

Let $\{ b_1, \dots, b_n \}$ be a finite basis of $F$.  Choose $n$ distinct elements $i_1, \dots, i_n \in I$ such that $i_{\ell} \notin J$ for all $\ell$, and such that the image of $x_{i_{\ell}}$ under $f: R^I \rightarrow M$ equals the image of $b_{\ell}$ under $g: F \rightarrow M$.  This is possible by our choice of $I$.  Now let $J' = J \cup \{i_1, \dots , i_n \}$, and define $R^{J'} \rightarrow F$ by $x_i \mapsto h(x_i)$ for $i \in J$ and $x_{i_{\ell}} \mapsto b_{\ell}$ for $\ell = 1, \dots, n$.  Let $N' = \ker(R^{J'} \rightarrow F)$.  Observe:
\begin{enumerate}
\item $R^{J'} \rightarrow F$ factors $f: R^I \rightarrow M$, hence $N' \subset K = \ker(f)$;
\item $R^{J'} \rightarrow F$ is a surjection onto a free finite module, hence it splits and so $N'$ is finitely generated;
\item $J \subset J'$ and $N \subset N'$.
\end{enumerate}
By (1) and (2) $e' = (J',N')$ is in $E$, by (3) $e' \geq e$, and by construction $M_{e'} = R^{J'}/N' \cong F$ is free.
\end{proof}

\subsection{Universally injective module maps}
\begin{definition}
Let $f: M \rightarrow N$ be a map of $R$-modules.  Then $f$ is called \emph{universally injective} if for every $R$-module $Q$, the map $f \otimes_{R} \textnormal{id}_{Q}: M \otimes_{R} Q \rightarrow N \otimes_{R} Q$ is injective.  A sequence $0 \rightarrow M_1 \rightarrow M_2 \rightarrow M_3 \rightarrow 0$ of $R$-modules is called \emph{universally exact} if it is exact and $M_1 \rightarrow M_2$ is universally injective.  
\end{definition}

\begin{examples}
\label{examples-universally-exact}
\begin{enumerate} 
\item A split short exact sequence is universally exact since tensoring commutes with taking direct sums.
\item The colimit of a directed system of universally exact sequences is universally exact.  This follows from the fact that taking directed colimits is exact and that tensoring commutes with taking colimits.  In particular the colimit of a directed system of split exact sequences is universally exact.  We will see below that, conversely, any universally exact sequence arises in this way.
\end{enumerate}
\end{examples}

\noindent
Next we give a list of criteria for a short exact sequence to be universally exact.  They are analogues of criteria for flatness given above.  Parts (3)-(6) below correspond, respectively, to the criteria for flatness given in Lemma \ref{lemma-flat-eq}, Lemma \ref{lemma-flat-factors-free}, Lemma \ref{lemma-flat-surjective-hom}, and Theorem \ref{theorem-lazard}.
\begin{theorem}
\label{theorem-universally-exact-criteria}
Let 
\[ 0 \rightarrow M_1 \xrightarrow{f_1} M_2 \xrightarrow{f_2} M_3 \rightarrow 0 \]
be an exact sequence of $R$-modules.  The following are equivalent: 
\begin{enumerate}
\item The sequence $0 \rightarrow M_1 \rightarrow M_2 \rightarrow M_3 \rightarrow 0$ is universally exact.
\item For every finitely presented $R$-module $Q$, the sequence 
\[ 0 \rightarrow M_1 \otimes_{R} Q \rightarrow M_2 \otimes_{R} Q \rightarrow M_3 \otimes_{R} Q \rightarrow 0 \]
is exact.
\item Given elements $x_i \in M_1$ $(i = 1, \dots, n)$, $y_j \in M_2$ $(j = 1, \dots, m)$, and $a_{ij} \in R$ $(i = 1, \dots, n, j = 1, \dots, m)$ such that for all $i$
\[ f_1(x_i) = \sum_j a_{ij} y_j,\]
there exists $z_j \in M_1$ $(j =1, \dots, m)$ such that for all $i$,
\[ x_i = \sum_j a_{ij} z_j . \]
\item Given a commutative diagram of $R$-module maps
\[
\xymatrix{
R^n \ar[r] \ar[d] &  R^m \ar[d] \\
M_1 \ar[r]^{f_1}        &  M_2 
}
\]
where $m$ and $n$ are integers, there exists a map $R^m \rightarrow M_1$ making the top triangle commute.
\item For every finitely presented $R$-module $P$, the map $\Hom_R(P,M_2) \rightarrow \Hom_R(P,M_3)$ is surjective.
\item The sequence $0 \rightarrow M_1 \rightarrow M_2 \rightarrow M_3 \rightarrow 0$ is the colimit of a directed system of split exact sequences of the form
\[ 0 \rightarrow M_{1} \rightarrow M_{2,i} \rightarrow M_{3,i} \rightarrow 0  \]
where the $M_{3,i}$ are finitely presented.
\end{enumerate}
\end{theorem}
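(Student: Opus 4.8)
The plan is to prove the chain of implications $(1) \Rightarrow (2) \Rightarrow (3) \Leftrightarrow (4) \Rightarrow (5) \Rightarrow (6) \Rightarrow (1)$, in close analogy with the treatment of the flatness criteria above. Three of these are essentially formal. The implication $(1) \Rightarrow (2)$ is immediate, a finitely presented module being in particular a module. The equivalence $(3) \Leftrightarrow (4)$ is a reformulation: in the diagram of (4) one lets $x_i, y_j$ be the images of the standard basis vectors under $R^n \to M_1$ and $R^m \to M_2$ and $(a_{ij})$ be the matrix of $R^n \to R^m$, so that commutativity of the square is exactly the system $f_1(x_i) = \sum_j a_{ij} y_j$, while a filler $R^m \to M_1$ of the top triangle is the same datum as elements $z_j \in M_1$ with $x_i = \sum_j a_{ij} z_j$; one passes back the same way. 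Finally $(6) \Rightarrow (1)$ is immediate from Examples \ref{examples-universally-exact}: a split exact sequence is universally exact, and a directed colimit of universally exact sequences is universally exact.

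For $(2) \Rightarrow (3)$ I would imitate the proof of the equational criterion for flatness by packaging the given relations into a single finitely presented test module. Given $x_i, y_j, a_{ij}$ as in (3), put $Q = \coker(R^m \to R^n)$, where the map $R^m \to R^n$ has matrix $(a_{ij})$; this $Q$ is finitely presented. Using the canonical identification $M_k \otimes_R Q = \coker(M_k^m \to M_k^n)$ for the three terms of the sequence, one checks that the class of $(x_1, \dots, x_n) \in M_1^n$ in $M_1 \otimes_R Q$ maps, under $f_1 \otimes \mathrm{id}$, to the class of $(f_1(x_1), \dots, f_1(x_n))$ in $M_2 \otimes_R Q$; but $(f_1(x_i))_i = (\sum_j a_{ij} y_j)_i$ lies in the image of $M_2^m \to M_2^n$, so this class is zero. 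By (2) the map $M_1 \otimes_R Q \to M_2 \otimes_R Q$ is injective, so the class of $(x_i)_i$ in $M_1 \otimes_R Q$ already vanishes, which says precisely that $x_i = \sum_j a_{ij} z_j$ for some $z_j \in M_1$.

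For $(4) \Rightarrow (5)$, given a finitely presented $P$, pick a presentation $R^n \xrightarrow{B} R^m \to P \to 0$ and a map $\varphi \colon P \to M_3$. Since $R^m$ is free and $M_2 \to M_3$ is onto, lift $R^m \to P \xrightarrow{\varphi} M_3$ to $\psi \colon R^m \to M_2$; then $\psi \circ B$ factors through $M_1 = \ker f_2$, producing a commutative square of the shape required by (4). A filler $\sigma \colon R^m \to M_1$ then has the property that $\psi - f_1 \circ \sigma$ still lifts $R^m \to M_3$ but kills the image of $B$, hence descends to the desired map $P \to M_2$ lifting $\varphi$. For $(5) \Rightarrow (6)$, write $M_3 = \colim_i M_{3,i}$ as a directed colimit of finitely presented modules, as recalled in the proof of Lazard's theorem, and set $M_{2,i} = M_2 \times_{M_3} M_{3,i}$, so that $0 \to M_1 \to M_{2,i} \to M_{3,i} \to 0$ is exact with the evident maps to the original sequence. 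By (5) the composite $M_{3,i} \to M_3$ lifts to $M_{3,i} \to M_2$, and such a lift is the same as a section of $M_{2,i} \to M_{3,i}$; thus each of these sequences is split exact with finitely presented cokernel. Since directed colimits are exact they commute with the fiber product, giving $\colim_i M_{2,i} = M_2 \times_{M_3} M_3 = M_2$, so the original sequence is the colimit of this directed system.

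I expect the crux to be $(2) \Rightarrow (3)$: cooking up the correct finitely presented module and keeping the tensor-product bookkeeping straight (in particular which of $(a_{ij})$ or its transpose presents $Q$, and how the class of $(x_i)_i$ transforms) is the one genuinely delicate point, just as it is for the equational criterion for flatness. The only other place requiring a moment's care is $(5) \Rightarrow (6)$, namely that directed colimits commute with the fiber product $M_2 \times_{M_3} M_{3,i}$; but this is just exactness of filtered colimits, and everything else in that step is routine.
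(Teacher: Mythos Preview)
Your proof is correct and follows essentially the same route as the paper: the same chain of implications $(1)\Rightarrow(2)\Rightarrow(3)\Leftrightarrow(4)\Rightarrow(5)\Rightarrow(6)\Rightarrow(1)$, with the same finitely presented test module $Q=\coker(R^m\to R^n)$ in $(2)\Rightarrow(3)$, the same subtract-off-the-filler argument in $(4)\Rightarrow(5)$, and the same fiber-product construction $M_{2,i}=M_2\times_{M_3}M_{3,i}$ in $(5)\Rightarrow(6)$. The only cosmetic difference is that in $(5)\Rightarrow(6)$ you invoke the general fact that filtered colimits commute with finite limits to identify $\colim_i M_{2,i}$ with $M_2$, whereas the paper argues this directly via a five-lemma style diagram chase; both are fine.
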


\begin{proof}
Obviously (1) implies (2).

Next we show (2) implies (3).  Let $f_1(x_i) = \sum_j a_{ij} y_j$ be relations as in (3).  Let $(f_j)$ be a basis for $R^m$, $(e_i)$ a basis for $R^n$, and $R^m \rightarrow R^n$ the map given by $f_{j} \mapsto \sum_{i} a_{ij} e_i$.  Let $Q$ be the cokernel of $R^m \rightarrow R^n$.  Then tensoring $R^m \rightarrow R^n \rightarrow Q \rightarrow 0$ by the map $f_1: M_1 \rightarrow M_2$, we get a commutative diagram
\[
\xymatrix{
M_1^{\oplus m} \ar[r] \ar[d] & M_1^{\oplus n} \ar[r] \ar[d] & M_1 \otimes_R Q \ar[r] \ar[d] & 0 \\
M_2^{\oplus m} \ar[r] & M_2^{\oplus n} \ar[r] & M_2 \otimes_R Q \ar[r] & 0
}
\]
where $M_1^{\oplus m} \rightarrow M_1^{\oplus n}$ is given by 
\[ \textstyle (z_1, \dots, z_m) \mapsto (\sum_{j} a_{1j} z_j, \dots, \sum_{j} a_{nj} z_j),\]
and $M_2^{\oplus m} \rightarrow M_2^{\oplus n}$ is given similarly.  We want to show $x = (x_1, \dots, x_n) \in M_1^{\oplus n}$ is in the image of $M_1^{\oplus m} \rightarrow M_1^{\oplus n}$.  By (2) the map $M_1 \otimes Q \rightarrow M_2 \otimes Q$ is injective, hence by exactness of the top row it is enough to show $x$ maps to $0$ in $M_2 \otimes Q$, and so by exactness of the bottom row it is enough to show the image of $x$ in $M_2^{\oplus n}$ is in the image of $M_2^{\oplus m} \rightarrow M_2^{\oplus n}$.  This is true by assumption.

Condition (4) is just a translation of (3) into diagram form.

Next we show (4) implies (5). Let $\varphi: P \rightarrow M_3$ be a map from a finitely presented $R$-module $P$.  We must show that $\varphi$ lifts to a map $P \rightarrow M_2$.  Choose a presentation of $P$, 
\[ R^n \xrightarrow{g_1} R^m \xrightarrow{g_2} P \rightarrow 0 . \]
Using freeness of $R^n$ and $R^m$, we can construct $h_2: R^m \rightarrow M_2$ and then $h_1: R^n \rightarrow M_1$ such that the following diagram commutes
\[
\xymatrix{
         & R^n \ar[r]^{g_1} \ar[d]^{h_1} & R^m \ar[r]^{g_2} \ar[d]^{h_2} & P \ar[r] \ar[d]^{\varphi} & 0 \\
0 \ar[r] & M_1 \ar[r]^{f_1} & M_2 \ar[r]^{f_2} & M_3 \ar[r] & 0 .
}
\]
By (4) there is a map $k_1: R^m \rightarrow M_1$ such that $k_1 \circ g_1 = h_1$.  Now define $h'_2: R^m \rightarrow M_2$ by $h_2' = h_2 - f_1 \circ k_1$.  Then
\[ h'_2 \circ g_1 = h_2 \circ g_1 - f_1 \circ k_1 \circ g_1 = h_2 \circ g_1 - f_1 \circ h_1 = 0 .
\]
Hence by passing to the quotient $h'_2$ defines a map $\varphi': P \rightarrow M_2$ such that $\varphi' \circ g_2 = h_2'$.  In a diagram, we have
\[
\xymatrix{
R^m \ar[r]^{g_2} \ar[d]_{h'_2} & P \ar[d]^{\varphi} \ar[dl]_{\varphi'} \\
M_2 \ar[r]^{f_2} & M_3.
}
\]
where the top triangle commutes.  We claim that $\varphi'$ is the desired lift, i.e.\ that $f_2 \circ \varphi' = \varphi$.  From the definitions we have
\[ f_2 \circ \varphi' \circ g_2 = f_2 \circ h'_2 = f_2 \circ h_2 - f_2 \circ f_1 \circ k_1 = f_2 \circ h_2 = \varphi \circ g_2 .\]
Since $g_2$ is surjective, this finishes the proof.

Now we show (5) implies (6).  Write $M_{3}$ as the colimit of a directed system of finitely presented modules $M_{3,i}$.  Let $M_{2,i}$ be the fiber product of $M_{3,i}$ and $M_{2}$ over $M_{3}$---by definition this is the submodule of $M_2 \times M_{3,i}$ consisting of elements whose two projections onto $M_3$ are equal.  Let $M_{1,i}$ be the kernel of the projection $M_{2,i} \rightarrow M_{3,i}$.  Then we have a directed system of exact sequences
\[ 0 \rightarrow M_{1,i} \rightarrow M_{2,i} \rightarrow M_{3,i} \rightarrow 0, \]
and for each $i$ a map of exact sequences
\[
\xymatrix{
0  \ar[r] & M_{1,i} \ar[d] \ar[r]  &  M_{2,i}  \ar[r] \ar[d] & M_{3,i} \ar[d] \ar[r] & 0 \\
0  \ar[r] & M_{1} \ar[r]  &  M_{2}  \ar[r] & M_{3} \ar[r] & 0
}
\] 
compatible with the directed system.  From the definition of the fiber product $M_{2,i}$, it follows that the map $M_{1,i} \rightarrow M_1$ is an isomorphism.  By (5) there is a map $M_{3,i} \rightarrow M_{2}$ lifting $M_{3,i} \rightarrow M_3$, and by the universal property of the fiber product this gives rise to a section of $M_{2,i} \rightarrow M_{3,i}$.  Hence the sequences
\[ 0 \rightarrow M_{1,i} \rightarrow M_{2,i} \rightarrow M_{3,i} \rightarrow 0 \] split.  Passing to the colimit, we have a commutative diagram
\[
\xymatrix{
0  \ar[r] & \colim M_{1,i} \ar[d]^{\cong} \ar[r]  &  \colim M_{2,i}  \ar[r] \ar[d] & \colim M_{3,i} \ar[d]^{\cong} \ar[r] & 0 \\
0  \ar[r] & M_{1} \ar[r]  &  M_{2}  \ar[r] & M_{3} \ar[r] & 0
}
\]
with exact rows and outer vertical maps isomorphisms.  Hence $\colim M_{2,i} \rightarrow M_2$ is also an isomorphism and (6) holds.

Condition (6) implies (1) by Examples \ref{examples-universally-exact} (2).
\end{proof}

\noindent
The previous theorem shows that a universally exact sequence is always a colimit of split short exact sequences.  If the cokernel of a universally injective map is finitely presented, then in fact the map itself splits:
\begin{lemma}
\label{lemma-universally-exact-split}
Let
\[ 0 \rightarrow M_1 \rightarrow M_2 \rightarrow M_3 \rightarrow 0 \]
be an exact sequence of $R$-modules.  Suppose $M_3$ is of finite presentation.  Then
\[ 0 \rightarrow M_1 \rightarrow M_2 \rightarrow M_3 \rightarrow 0 \]
is universally exact if and only if it is split.
\end{lemma}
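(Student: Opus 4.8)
The implication ``split $\Rightarrow$ universally exact'' is immediate: a split short exact sequence is universally exact because tensoring commutes with direct sums (Examples \ref{examples-universally-exact}(1)), and this direction uses nothing about $M_3$ being finitely presented.

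For the converse, the idea is to feed the hypothesis into the $\Hom$-criterion already established. Assume the sequence is universally exact. Then by Theorem \ref{theorem-universally-exact-criteria}, condition (5) holds: for every finitely presented $R$-module $P$, the map $\Hom_R(P, M_2) \to \Hom_R(P, M_3)$ induced by $f_2$ is surjective. The key move is simply to apply this with $P = M_3$, which is legitimate precisely because $M_3$ is assumed to be of finite presentation. Surjectivity of $\Hom_R(M_3, M_2) \to \Hom_R(M_3, M_3)$ means that the identity map $\mathrm{id}_{M_3}$ lifts to some $s \colon M_3 \to M_2$ with $f_2 \circ s = \mathrm{id}_{M_3}$; that is, $s$ is a section of $f_2$, so the sequence splits.

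There is no real obstacle here beyond recognizing the right specialization: the content is entirely in Theorem \ref{theorem-universally-exact-criteria}, and once criterion (5) is in hand the lemma is two lines. The only point to state carefully is that the finite presentation hypothesis on $M_3$ is exactly what allows us to take $P = M_3$ in that criterion.
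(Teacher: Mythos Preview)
Your proposal is correct and follows exactly the same approach as the paper: the forward direction is immediate, and for the converse you apply criterion (5) of Theorem~\ref{theorem-universally-exact-criteria} with $P = M_3$ to obtain a section of $M_2 \to M_3$. The paper's proof is the same two-line argument, just stated more tersely.
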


\begin{proof}
A split sequence is always universally exact.  Conversely, if the sequence is universally exact, then by Theorem \ref{theorem-universally-exact-criteria} (5) applied to $P = M_3$, the map $M_{2} \rightarrow M_{3}$ admits a section.
\end{proof}

\noindent
The following lemma shows how universally injective maps are complementary to flat modules. 
\begin{lemma}
\label{lemma-flat-universally-injective}
Let $M$ be an $R$-module.  Then $M$ is flat if and only if any exact sequence of $R$-modules
\[ 0 \rightarrow M_1 \rightarrow M_2 \rightarrow M \rightarrow 0 \]
is universally exact.
\end{lemma}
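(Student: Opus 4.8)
The plan is to deduce both implications from Lemma~\ref{lemma-flat-surjective-hom} together with the equivalence (1)$\Leftrightarrow$(5) in Theorem~\ref{theorem-universally-exact-criteria}. These two results already package exactly the $\Hom$-lifting condition we need to manipulate, so no genuinely new argument is required; the point is just to match up the quantifiers correctly.

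For the forward direction, suppose $M$ is flat and let $0 \to M_1 \to M_2 \to M \to 0$ be an exact sequence. I would apply Lemma~\ref{lemma-flat-surjective-hom} with $N = M_2$ and the given surjection $M_2 \to M$: flatness of $M$ then says that for every finitely presented $R$-module $P$ the induced map $\Hom_R(P, M_2) \to \Hom_R(P, M)$ is surjective. This is precisely condition (5) of Theorem~\ref{theorem-universally-exact-criteria} for this sequence, so by that theorem the sequence is universally exact.

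For the converse, assume every exact sequence $0 \to M_1 \to M_2 \to M \to 0$ is universally exact. To prove $M$ is flat I would verify the criterion of Lemma~\ref{lemma-flat-surjective-hom}: given a finitely presented $P$ and a surjection $N \to M$, set $M_1 = \ker(N \to M)$, so that $0 \to M_1 \to N \to M \to 0$ is exact and hence universally exact by hypothesis. Condition (5) of Theorem~\ref{theorem-universally-exact-criteria} applied to this sequence gives that $\Hom_R(P, N) \to \Hom_R(P, M)$ is surjective, which is exactly what Lemma~\ref{lemma-flat-surjective-hom} demands; therefore $M$ is flat.

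There is essentially no hard step here — the only thing to watch is that the hypothesis of the converse is a statement about \emph{all} short exact sequences ending in $M$, which is what allows us to feed an arbitrary surjection $N \to M$ into the argument. One could alternatively run the $\Rightarrow$ direction more directly by choosing a presentation $0 \to K \to F \to M \to 0$ with $F$ free and using the vanishing of $\mathrm{Tor}_1(F, Q)$ in the long exact sequence, but routing through the already-established Lemma~\ref{lemma-flat-surjective-hom} keeps the proof within the elementary framework developed above.
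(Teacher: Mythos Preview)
Your proof is correct and follows exactly the same approach as the paper, which simply says the result follows from Lemma~\ref{lemma-flat-surjective-hom} and Theorem~\ref{theorem-universally-exact-criteria}~(5). You have merely spelled out the matching of quantifiers that the paper leaves implicit.
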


\begin{proof}
This follows from Lemma \ref{lemma-flat-surjective-hom} and Theorem \ref{theorem-universally-exact-criteria} (5).
\end{proof}

\begin{examples}
\begin{enumerate}
\item In spite of Lemma \ref{lemma-universally-exact-split}, it is possible to have a short exact sequence of $R$-modules
\[ 0 \rightarrow M_1 \rightarrow M_2 \rightarrow M_3 \rightarrow 0 \]
that is universally exact but non-split.  For instance, take $R = \mathbf{Z}$, let $M_1 = \bigoplus_{n=1}^{\infty} \mathbf{Z}$, let $M_{2} = \prod_{n = 1}^{\infty} \mathbf{Z}$, and let $M_{3}$ be the cokernel of the inclusion $M_1 \rightarrow M_2$.  Then $M_1,M_2,M_3$ are all flat since they are torsion-free, so by Lemma \ref{lemma-flat-universally-injective},
\[ 0 \rightarrow M_1 \rightarrow M_2 \rightarrow M_3 \rightarrow 0 \]
is universally exact.  However there can be no section $s: M_3 \rightarrow M_2$.  In fact, if $x$ is the image of $(2,2^2,2^3, \dots) \in M_2$ in $M_3$, then any module map $s: M_3 \rightarrow M_2$ must kill $x$.  This is because $x \in 2^n M_3$ for any $n \geq 1$, hence $s(x)$ is divisible by $2^n$ for all $n \geq 1$ and so must be $0$. 

\item In spite of Lemma \ref{lemma-flat-universally-injective}, it is possible to have a short exact sequence of $R$-modules
\[ 0 \rightarrow M_1 \rightarrow M_2 \rightarrow M_3 \rightarrow 0 \]
that is universally exact but with $M_1,M_2,M_3$ all non-flat.  In fact if $M$ is any non-flat module, just take the split exact sequence 
\[ 0 \rightarrow M \rightarrow M \oplus M \rightarrow M \rightarrow 0. \]
For instance over $R = \mathbf{Z}$, take $M$ to be any torsion module.

\item Taking the direct sum of an exact sequence as in (1) with one as in (2), we get a short exact sequence of $R$-modules
\[ 0 \rightarrow M_1 \rightarrow M_2 \rightarrow M_3 \rightarrow 0 \]
that is universally exact, non-split, and such that $M_1,M_2,M_3$ are all non-flat.
\end{enumerate}
\end{examples}

\noindent
We end this section with a simple observation.
\begin{lemma}
\label{lemma-ui-flat-domain}
Let $f: M_1 \rightarrow M_2$ be a universally injective map of $R$-modules, and suppose $M_2$ is flat.  Then $M_1$ is flat.
\end{lemma}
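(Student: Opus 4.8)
The plan is to verify flatness of $M_1$ via the equational criterion (Lemma \ref{lemma-flat-eq}): I will show that every relation in $M_1$ is trivial. The essential point is that a relation in $M_1$ pushes forward along $f$ to a relation in $M_2$, which is trivial because $M_2$ is flat, and then universal injectivity of $f$ allows me to pull the module elements witnessing triviality back into $M_1$.

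First I would fix a relation $\sum_{i=1}^{n} a_i x_i = 0$ with $x_i \in M_1$ and $a_i \in R$. Applying the module map $f$ gives $\sum_i a_i f(x_i) = 0$ in $M_2$. Since $M_2$ is flat, Lemma \ref{lemma-flat-eq} produces elements $w_1, \dots, w_m \in M_2$ and scalars $b_{ij} \in R$ with $f(x_i) = \sum_j b_{ij} w_j$ for all $i$ and $\sum_i a_i b_{ij} = 0$ for all $j$.

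Next, consider the exact sequence $0 \rightarrow M_1 \xrightarrow{f} M_2 \rightarrow M_3 \rightarrow 0$ with $M_3 = \coker(f)$; since $f$ is universally injective this sequence is universally exact. I would then apply criterion (3) of Theorem \ref{theorem-universally-exact-criteria} to this sequence with the data $x_i \in M_1$, $w_j \in M_2$, and $a_{ij} := b_{ij}$: the equations $f(x_i) = \sum_j b_{ij} w_j$ are exactly the hypothesis of that criterion, so we obtain $z_1, \dots, z_m \in M_1$ with $x_i = \sum_j b_{ij} z_j$ for all $i$. Combined with the relations $\sum_i a_i b_{ij} = 0$, this exhibits $\sum_i a_i x_i = 0$ as a trivial relation in $M_1$, and hence $M_1$ is flat by Lemma \ref{lemma-flat-eq}.

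I do not expect a genuine obstacle here: the only ``idea'' is recognizing that the equational criterion for flatness and criterion (3) for universal exactness dovetail precisely — flatness of $M_2$ supplies the scalars $b_{ij}$ with $\sum_i a_i b_{ij} = 0$, and universal injectivity of $f$ is exactly what is needed to move the accompanying module elements from $M_2$ back into $M_1$. The main care required is simply bookkeeping with the two index sets $i = 1, \dots, n$ and $j = 1, \dots, m$.
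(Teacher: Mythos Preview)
Your proof is correct, but the paper takes a more direct route. The paper simply verifies the definition of flatness: given an injection $N \rightarrow N'$, it considers the commutative square
\[
\xymatrix{
M_1 \otimes_{R} N \ar[r] \ar[d] & M_1 \otimes_{R} N' \ar[d] \\
M_2 \otimes_{R} N  \ar[r]       & M_2 \otimes_{R} N',
}
\]
observes that the vertical arrows are injective (universal injectivity of $f$) and the bottom arrow is injective (flatness of $M_2$), and concludes the top arrow is injective. Your argument instead routes through the equational criterion for flatness and criterion (3) of Theorem \ref{theorem-universally-exact-criteria}; this is a bit longer, but it has the virtue of making concrete the analogy the paper points out just before that theorem --- namely that criterion (3) for universal exactness is the counterpart of Lemma \ref{lemma-flat-eq} for flatness. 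In your proof the two criteria literally dovetail: flatness of $M_2$ supplies the scalars $b_{ij}$, and universal injectivity of $f$ supplies the module elements $z_j$.
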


\begin{proof}
Let $N \rightarrow N'$ be an injection and consider the commutative diagram
\[
\xymatrix{
M_1 \otimes_{R} N \ar[r] \ar[d] & M_1 \otimes_{R} N' \ar[d] \\
M_2 \otimes_{R} N  \ar[r]       & M_2 \otimes_{R} N'.
}
\]
From the assumptions, the vertical arrows and the bottom horizontal arrow are all injective, hence so is the top arrow.  We conclude that $M_1$ is flat.
\end{proof}

\section{The case of finite projective modules}
\noindent
In this section we give an elementary proof of the fact that the property of being a \emph{finite} projective module descends along faithfully flat ring maps.  The proof does not apply when we drop the finiteness condition, and the result will not be used in the rest of the paper.  However, the method is indicative of the one we shall use to prove descent for the property of being a \emph{countably generated} projective module---see the comments at the end of this section.

\begin{lemma}
\label{lemma-finite-projective}
Let $M$ be an $R$-module.  Then $M$ is finite projective if and only if $M$ is finitely presented and flat.
\end{lemma}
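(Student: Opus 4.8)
The plan is to prove the two implications separately; the substance of the converse is a one-line application of the Hom-surjectivity criterion, Lemma \ref{lemma-flat-surjective-hom}.

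For the forward direction, I would use the standard fact that a finite projective module is a direct summand of a finite free module: write $M \oplus N \cong R^n$. Then $N$ is a quotient of $R^n$, hence finitely generated, so choosing a surjection $R^m \to N$ and composing with the inclusion $N \hookrightarrow R^n$ exhibits $M \cong R^n/N$ as $\coker(R^m \to R^n)$; thus $M$ is finitely presented. For flatness, the inclusion $M \hookrightarrow R^n$ is split, hence universally injective, and $R^n$ is flat, so $M$ is flat by Lemma \ref{lemma-ui-flat-domain} (alternatively, tensor the split inclusion with an arbitrary injection and observe it stays injective).

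For the converse, suppose $M$ is finitely presented and flat. Since $M$ is finitely generated, pick a surjection $\pi \colon R^n \to M$. Now apply Lemma \ref{lemma-flat-surjective-hom} with the finitely presented module $P = M$ and the surjection $\pi$: it yields that $\Hom_R(M, R^n) \to \Hom_R(M, M)$ is surjective. Lifting $\mathrm{id}_M$ along this map gives a section $s \colon M \to R^n$ of $\pi$, so $M$ is a direct summand of $R^n$ and therefore finite projective.

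There is no real obstacle here; the proof is just an assembly of the criteria already established. The only point worth noting is that the finite-presentation hypothesis is used in exactly one essential way — it is what permits feeding $P = M$ into Lemma \ref{lemma-flat-surjective-hom}. Flatness on its own gives only factorizations of maps $P \to M$ through finite free modules (Lemma \ref{lemma-flat-factors-fp}), not an honest splitting of a chosen surjection onto $M$.
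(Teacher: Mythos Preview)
Your proof is correct and follows essentially the same approach as the paper: both directions match, with the converse being verbatim the same application of Lemma \ref{lemma-flat-surjective-hom} to $P=M$. The only cosmetic differences are in the forward direction, where the paper simply asserts ``any projective module is flat'' rather than invoking Lemma \ref{lemma-ui-flat-domain}, and obtains finite presentation by splitting a chosen surjection $R^n \to M$ (your complement $N$ is exactly the kernel $K$ in that argument).
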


\begin{proof}
First suppose $M$ is finite projective. Any projective module is flat, so we just need to show that $M$ is also finitely presented.  Take a surjection $R^n \rightarrow M$ and let $K$ be the kernel.  Since $M$ is projective, $0 \rightarrow K \rightarrow R^n \rightarrow M \rightarrow 0$ splits.  So $K$ is a direct summand of $R^n$ and thus finitely generated.  This shows $M$ is finitely presented.   

Conversely, if $M$ is finitely presented and flat, then take a surjection $R^n \rightarrow M$.  By Lemma \ref{lemma-flat-surjective-hom} applied to $P = M$, the map $R^n \rightarrow M$ admits a section.  So $M$ is a direct summand of a free module and hence projective.
\end{proof}

\begin{lemma}
\label{lemma-descend-properties-modules}
Let $R \to S$ be a faithfully flat ring map.
Let $M$ be an $R$-module. Then:
\begin{enumerate}
\item If the $S$-module $M \otimes_R S$ is of finite presentation, then
$M$ is of finite presentation.
\item If the $S$-module $M \otimes_R S$ is flat, then $M$ is flat.
\end{enumerate}
\end{lemma}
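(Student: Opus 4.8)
The plan is to handle the two statements separately; in both cases the key is that the faithfully flat base change $-\otimes_R S$ is exact and detects the vanishing of a module.

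\emph{Statement (2).} Recall that $M$ is flat over $R$ exactly when $\mathrm{id}_M \otimes_R \iota : M \otimes_R N' \to M \otimes_R N$ is injective for every injection $\iota : N' \hookrightarrow N$ of $R$-modules. Fix such an $\iota$ and let $Q = \ker(\mathrm{id}_M \otimes_R \iota)$. Applying $-\otimes_R S$, which is exact since $S$ is $R$-flat, shows that $Q \otimes_R S$ is the kernel of $(M\otimes_R N')\otimes_R S \to (M\otimes_R N)\otimes_R S$. Under the natural identification $(M\otimes_R L)\otimes_R S \cong (M\otimes_R S)\otimes_S(L\otimes_R S)$ this map becomes $\mathrm{id}_{M\otimes_R S}\otimes_S(\iota\otimes_R\mathrm{id}_S)$. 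Now $\iota\otimes_R\mathrm{id}_S : N'\otimes_R S \to N\otimes_R S$ is injective because $S$ is $R$-flat, and $M\otimes_R S$ is $S$-flat by hypothesis, so this map is injective and therefore $Q\otimes_R S = 0$. Faithful flatness of $S$ over $R$ then forces $Q = 0$, so $M$ is flat. (One could instead run the same base-change argument on $\mathrm{Tor}_1^R(M, R/I)$ for finitely generated ideals $I$, using that $\mathrm{Tor}$ commutes with the flat base change $R\to S$, or invoke Lemma \ref{lemma-flat-surjective-hom} together with the fact that $\Hom$ out of a finitely presented module commutes with flat base change.)

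\emph{Statement (1).} Here I would first descend finite generation, and then descend the finiteness of a presentation kernel. Since $M\otimes_R S$ is of finite presentation it is in particular finitely generated over $S$. Write $M$ as the filtered union of its finitely generated $R$-submodules $M_\alpha$; by flatness of $S$ the maps $M_\alpha\otimes_R S \to M\otimes_R S$ are injective and exhibit $M\otimes_R S$ as the filtered union of the $M_\alpha\otimes_R S$, so a finite generating set of $M\otimes_R S$ already lies in some $M_\alpha\otimes_R S$. Then $(M/M_\alpha)\otimes_R S = 0$, hence $M/M_\alpha = 0$ by faithful flatness, so $M = M_\alpha$ is finitely generated. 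Now choose a surjection $R^n\to M$ with kernel $K$ and tensor with $S$ to get a short exact sequence $0\to K\otimes_R S \to S^n \to M\otimes_R S \to 0$. Since $M\otimes_R S$ is of finite presentation and $S^n$ is finite free, $K\otimes_R S$ is a finitely generated $S$-module (the relations of a finitely presented module relative to any finite free presentation form a finitely generated module, e.g.\ by Schanuel's lemma). Applying the finite-generation descent just proved to $K$ in place of $M$ shows $K$ is finitely generated, so $R^n\to M$ presents $M$ as finitely presented.

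The only inputs beyond the definitions and the results of the excerpt are two standard facts---faithfully flat descent of finite generation (sketched above) and the finite generation of the relation module of a finitely presented module---which can also be cited from \cite{stacks-project}. The step requiring the most care is the ordering in (1): one has no finite free cover $R^n\to M$ to work with until finite generation of $M$ has itself been descended, so that reduction must come first. By contrast (2) is an essentially immediate consequence of exactness of flat base change together with faithful flatness detecting that the relevant kernel vanishes.
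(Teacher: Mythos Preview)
Your proof is correct. The paper does not actually prove this lemma at all; it simply cites \cite[Lemma 7.114.8]{stacks-project}. So rather than taking the same or a different approach, you have supplied a complete argument where the paper defers to an external reference. Your proof is the standard one: for (2) you base-change the offending kernel and kill it by faithful flatness, and for (1) you first descend finite generation via the filtered-union trick and then descend finite generation of the relation module using Schanuel's lemma. These are exactly the arguments one finds in the cited Stacks Project tag (and its prerequisites), so in effect you have unpacked the citation.
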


\begin{proof}
This is \cite[Lemma 7.114.8]{stacks-project}.
\end{proof}

\begin{proposition}
\label{proposition-ffdescent-finite-projectivity}
Let $R \rightarrow S$ be a faithfully flat ring map.  Let $M$ be an $R$-module.  If the $S$-module $M \otimes_{R} S$ is finite projective, then $M$ is finite projective. 
\end{proposition}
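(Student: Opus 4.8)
The plan is simply to chain together the characterization of finite projective modules in Lemma \ref{lemma-finite-projective} with the descent statements collected in Lemma \ref{lemma-descend-properties-modules}. There is essentially no content beyond these two inputs, so I do not anticipate a genuine obstacle; the only ``work'' is making sure the hypotheses line up on the correct side of the ring map.

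First I would pass to $S$ and apply Lemma \ref{lemma-finite-projective}: the assumption that $M \otimes_R S$ is finite projective over $S$ is equivalent to saying that $M \otimes_R S$ is an $S$-module of finite presentation which is flat over $S$. Next I would invoke Lemma \ref{lemma-descend-properties-modules}: part (1) shows that $M$ is an $R$-module of finite presentation, and part (2) shows that $M$ is flat over $R$. Finally I would run Lemma \ref{lemma-finite-projective} in the reverse direction over $R$: a finitely presented flat $R$-module is finite projective, hence $M$ is finite projective, which is what we wanted.

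It is perhaps worth flagging why this clean argument does not survive the removal of the finiteness hypothesis, as this motivates the rest of the paper: Lemma \ref{lemma-descend-properties-modules}(1) has no analogue for projectivity itself (there is no ``descent of free-ness'' or direct analogue for arbitrary projective modules), so the equivalence ``finite presentation $+$ flat $=$ finite projective'' cannot be replaced by anything that descends formally. This is exactly the gap that will later be bridged via the Mittag-Leffler condition and d\'evissage rather than by a one-line deduction.
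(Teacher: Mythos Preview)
Your argument is correct and is exactly the paper's own proof: the paper simply says ``Follows from Lemmas \ref{lemma-finite-projective} and \ref{lemma-descend-properties-modules},'' which is precisely the chain you spell out. Your closing remark about why the argument fails without the finiteness hypothesis also mirrors the paper's own commentary following the proposition.
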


\begin{proof}
Follows from Lemmas \ref{lemma-finite-projective} and \ref{lemma-descend-properties-modules}.
\end{proof}

\noindent
The rest of this paper is about removing the finiteness assumption by using d\'evissage to reduce to the countably generated case.  In the countably generated case, the strategy is to find a characterization of countably generated projective modules analogous to Lemma \ref{lemma-finite-projective}, and then to prove directly that this characterization descends. We do this by introducing the notion of a Mittag-Leffer module and proving that if a module $M$ is countably generated, then it is projective if and only if it is flat and Mittag-Leffler (Theorem \ref{theorem-projectivity-characterization}).  When $M$ is finitely generated, this statement reduces to Lemma \ref{lemma-finite-projective} (since, according to Examples \ref{examples-ML} (1), a finitely generated module is Mittag-Leffler if and only if it is finitely presented).

\section{Transfinite D\'{e}vissage of modules \`{a} la Kaplansky} 
\label{section-devissage}
\noindent
In this section we introduce a d\'evissage technique for decomposing a module into a direct sum.  The main result is that a projective module is a direct sum of countably generated modules (Theorem \ref{theorem-projective-direct-sum} below).  We follow \cite{K}.

\subsection{Transfinite d\'evissage of modules}
\begin{definition}
\label{definition-devissage}
Let $M$ be an $R$-module.  A \emph{direct sum d\'{e}vissage} of $M$ is a family of submodules $(M_{\alpha})_{\alpha \in S}$, indexed by an ordinal $S$ and increasing (with respect to inclusion), such that:
\begin{enumerate}
\item[(0)] $M_0 = 0$;
\item[(1)] $M = \bigcup_{\alpha} M_{\alpha}$;
\item[(2)] if $\alpha \in S$ is a limit ordinal, then $M_{\alpha} = \bigcup_{\beta < \alpha} M_{\beta}$;
\item[(3)] if $\alpha + 1 \in S$, then $M_{\alpha}$ is a direct summand of $M_{\alpha+1}$.
\end{enumerate}
If moreover
\begin{enumerate}
\item[(4)] $M_{\alpha+1}/M_{\alpha}$ is countably generated for $\alpha+1 \in S$,
\end{enumerate}
then $(M_{\alpha})_{\alpha \in S}$ is called a \emph{Kaplansky d\'{e}vissage} of $M$.
\end{definition}

\noindent
The terminology is justified by the following lemma.
\begin{lemma}
\label{lemma-direct-sum-devissage}
Let $M$ be an $R$-module.  If $(M_{\alpha})_{\alpha \in S}$ is a direct sum d\'evissage of $M$, then $M \cong \bigoplus_{\alpha +1  \in S} M_{\alpha+1}/M_{\alpha}$.
\end{lemma}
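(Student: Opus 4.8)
The plan is to show that the natural map $\bigoplus_{\alpha+1 \in S} M_{\alpha+1}/M_\alpha \to M$, built from splittings, is an isomorphism by transfinite induction. First I would use condition (3) to choose, for each $\alpha$ with $\alpha + 1 \in S$, a submodule $N_{\alpha+1} \subset M_{\alpha+1}$ that is a complement to $M_\alpha$, so that $M_{\alpha+1} = M_\alpha \oplus N_{\alpha+1}$ and $N_{\alpha+1} \cong M_{\alpha+1}/M_\alpha$. The claim to prove by induction on $\gamma \in S$ is that $M_\gamma = \bigoplus_{\alpha + 1 \leq \gamma} N_{\alpha+1}$ (internal direct sum inside $M$); taking $\gamma$ to range over all of $S$ and using condition (1) then yields $M = \bigoplus_{\alpha+1 \in S} N_{\alpha+1} \cong \bigoplus_{\alpha+1 \in S} M_{\alpha+1}/M_\alpha$.

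For the induction: the base case $\gamma = 0$ is condition (0). For the successor step, if $M_\gamma = \bigoplus_{\alpha+1 \leq \gamma} N_{\alpha+1}$, then since $M_{\gamma+1} = M_\gamma \oplus N_{\gamma+1}$ by choice of $N_{\gamma+1}$, we get $M_{\gamma+1} = \bigoplus_{\alpha+1 \leq \gamma+1} N_{\alpha+1}$. For a limit ordinal $\gamma$, condition (2) gives $M_\gamma = \bigcup_{\beta < \gamma} M_\beta$, and since the $M_\beta$ form an increasing chain each equal (by the inductive hypothesis) to the partial direct sum $\bigoplus_{\alpha+1 \leq \beta} N_{\alpha+1}$, the union is exactly $\bigoplus_{\alpha+1 < \gamma} N_{\alpha+1}$: every element of $M_\gamma$ lies in some $M_\beta$ hence is a finite sum of elements from the $N_{\alpha+1}$, and a relation among the $N_{\alpha+1}$ involves only finitely many indices, all below some $\beta < \gamma$, so it is trivial by the inductive hypothesis at stage $\beta$.

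The main thing to be careful about is the limit-ordinal step: one must check both that the $N_{\alpha+1}$ for $\alpha+1 < \gamma$ span $M_\gamma$ and that their sum is direct. Spanning is immediate from condition (2) together with the inductive hypothesis. Directness follows because the direct-sum condition is a statement about finite subsets of indices, and any finite set of ordinals below a limit ordinal $\gamma$ is bounded by some $\beta < \gamma$, where directness already holds by induction. So there is no real obstacle — the argument is a routine transfinite induction — but the bookkeeping of which partial sums equal which $M_\gamma$ is where care is needed. Note also that for this lemma only conditions (0)--(3) are used; condition (4) (countable generation of the quotients) plays no role here.
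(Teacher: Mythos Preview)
Your proof is correct and follows essentially the same transfinite-induction strategy as the paper. The only cosmetic difference is that you fix complements $N_{\alpha+1}\subset M_{\alpha+1}$ and prove the single internal-direct-sum statement $M_\gamma=\bigoplus_{\alpha+1\le\gamma}N_{\alpha+1}$, whereas the paper works with the external map $\bigoplus M_{\alpha+1}/M_\alpha\to M$ and argues surjectivity and injectivity separately; the underlying inductions are the same.
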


\begin{proof}
By property (3) of a direct sum d\'evissage, there is an inclusion $M_{\alpha+1}/M_{\alpha} \rightarrow M$ for each $\alpha \in S$.  Consider the map
\[ f: \bigoplus_{\alpha + 1\in S} M_{\alpha+1}/M_{\alpha} \rightarrow M \]
given by the sum of these inclusions.  Transfinite induction on $S$ shows that the image contains $M_{\alpha}$ for every $\alpha \in S$: for $\alpha = 0$ this is true by (0); if $\alpha+1$ is a successor ordinal then it is clearly true; and if $\alpha$ is a limit ordinal and it is true for $\beta < \alpha$, then it is true for $\alpha$ by (2). Hence $f$ is surjective by (1).  

Transfinite induction on $S$ also shows that for every $\beta \in S$ the restriction
\[ f_{\beta} : \bigoplus_{\alpha + 1 \leq \beta} M_{\alpha+1}/M_{\alpha} \rightarrow M \] 
of $f$ is injective: For $\beta = 0$ it is true.  If it is true for all $\beta' < \beta$, then let $x$ be in the kernel and write $x = (x_{\alpha+1})_{\alpha+1 \leq \beta}$ in terms of its components $x_{\alpha+1} \in M_{\alpha+1}/M_{\alpha}$.  By property (3) both $(x_{\alpha+1})_{\alpha+1 < \beta}$ and $x_{\beta+1}$ map to $0$.  Hence $x_{\beta+1} = 0$ and, by the assumption that the restriction $f_{\beta'}$ is injective for all $\beta' < \beta$, also $x_{\alpha+1} = 0$ for every $\alpha+1 < \beta$.  So $x = 0$ and $f_{\beta}$ is injective, which finishes the induction.  We conclude that $f$ is injective since $f_{\beta}$ is for each $\beta \in S$.
\end{proof}

\begin{lemma}
\label{lemma-Kaplansky-devissage}
Let $M$ be an $R$-module.  Then $M$ is a direct sum of countably generated $R$-modules if and only if it admits a Kaplansky d\'evissage.
\end{lemma}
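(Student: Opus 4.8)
The plan is to prove the two directions separately. The ``if'' direction is immediate: given a Kaplansky dévissage $(M_\alpha)_{\alpha \in S}$, properties (0)--(3) make it a direct sum dévissage, so by Lemma \ref{lemma-direct-sum-devissage} we have $M \cong \bigoplus_{\alpha+1 \in S} M_{\alpha+1}/M_\alpha$, and property (4) guarantees each summand $M_{\alpha+1}/M_\alpha$ is countably generated. So the real content is the ``only if'' direction.

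For ``only if'', suppose $M = \bigoplus_{i \in I} N_i$ with each $N_i$ countably generated. Well-order the index set $I$, so that $I$ is identified with an ordinal, and for each ordinal $\alpha$ (up to the order type of $I$) set $M_\alpha = \bigoplus_{i < \alpha} N_i$, viewed as a submodule of $M$. I would then check the four properties directly. Property (0) holds since the empty direct sum is $0$; property (1) holds since every element of $M$ has finite support; property (2) holds because $\bigoplus_{i < \alpha} N_i = \bigcup_{\beta < \alpha} \bigoplus_{i < \beta} N_i$ for a limit ordinal $\alpha$; property (3) holds because $M_{\alpha+1} = M_\alpha \oplus N_\alpha$, exhibiting $M_\alpha$ as a direct summand; and property (4) holds because $M_{\alpha+1}/M_\alpha \cong N_\alpha$ is countably generated by hypothesis.

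The one genuinely subtle point --- and the step I expect to be the main obstacle --- is indexing correctly so that the family is indexed by an honest ordinal $S$ in the sense of Definition \ref{definition-devissage}, rather than by the set $I$ with some fixed well-ordering. One wants $S$ to be the successor ordinal $\sup\{\alpha : \alpha \in I\} + 1$ (or the order type of $I$ together with a top element, depending on whether $I$ has a maximum) so that $M = \bigcup_{\alpha \in S} M_\alpha$ is actually attained at the top index; the definition asks for $M = \bigcup_\alpha M_\alpha$ and for the filtration to be indexed by $S$ with $M_0 = 0$, so a little care is needed to make the bookkeeping match. Concretely, I would take $S$ to be the least ordinal such that $I$ is order-isomorphic to the set of ordinals strictly less than $S$ is not quite right --- instead let $I = \{\beta : \beta < \gamma\}$ for the order type $\gamma$, set $S = \gamma + 1$, and $M_\alpha = \bigoplus_{\beta < \alpha} N_\beta$ for $\alpha \le \gamma$; then $M_\gamma = M$ and all properties hold. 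Once the indexing is pinned down this way, the verification of (0)--(4) is routine as sketched above.
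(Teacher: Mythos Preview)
Your proposal is correct and follows exactly the paper's approach: invoke Lemma~\ref{lemma-direct-sum-devissage} for the ``if'' direction, and for ``only if'' well-order $I$ and set $M_\alpha = \bigoplus_{j<\alpha} N_j$. The paper's proof is a one-liner that leaves the indexing bookkeeping implicit; your discussion of taking $S = \gamma+1$ so that $M_\gamma = M$ is a legitimate clarification of a point the paper glosses over, but the argument is the same.
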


\begin{proof}
The lemma takes care of the ``if'' direction.  Conversely, suppose $M = \bigoplus_{i \in I} N_i$ where each $N_i$ is a countably generated $R$-module.  Well-order $I$ so that we can think of it as an ordinal.  Then setting $M_{i} = \bigoplus_{j < i} N_{j}$ gives a Kaplansky d\'evissage $(M_i)_{i \in I}$ of $M$.  
\end{proof}

\begin{theorem}
\label{theorem-kaplansky-direct-sum}
Suppose $M$ is a direct sum of countably generated $R$-modules.  If $P$ is a direct summand of $M$, then $P$ is also a direct sum of countably generated $R$-modules.
\end{theorem}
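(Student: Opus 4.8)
The plan is to manufacture a single Kaplansky d\'evissage of $M$ that is simultaneously adapted to the given decomposition $M = \bigoplus_{i \in I} N_i$ (each $N_i$ countably generated) and to a splitting $M = P \oplus Q$ exhibiting $P$ as a direct summand; intersecting this d\'evissage with $P$ will then yield a Kaplansky d\'evissage of $P$, and Lemma \ref{lemma-Kaplansky-devissage} finishes the proof. So fix such decompositions and let $\pi \colon M \to P$ be the projection with kernel $Q$. For $J \subseteq I$ write $M_J = \bigoplus_{i \in J} N_i$, and call $J$ \emph{good} if $\pi(M_J) \subseteq M_J$. One checks at once that for good $J$ one has $M_J = (M_J \cap P) \oplus (M_J \cap Q)$ and $\pi(M_J) = M_J \cap P$, and that the union of any chain of good subsets of $I$ is good.

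The key combinatorial input is: if $J \subseteq I$ is good and $X \subseteq I$ is countable, then there is a good subset $J'$ with $J \cup X \subseteq J'$ and $J' \setminus J$ countable. To prove it I would start with $J_0 = J \cup X$ and iterate: given $J_n$, use countable generation of each $N_i$ to find, for each $i \in J_n \setminus J$, a countable $S_i \subseteq I$ with $\pi(N_i) \subseteq M_{S_i}$ (the image under $\pi$ of a generator lies in a finite sub-sum of $\bigoplus_i N_i$), and set $J_{n+1} = J_n \cup \bigcup_{i \in J_n \setminus J} S_i$. Inductively each $J_n \setminus J$ is countable, so $J' = \bigcup_n J_n$ differs from $J$ by a countable set; and $J'$ is good, since for $i \in J'$ either $i \in J$ and $\pi(N_i) \subseteq M_J$ by goodness of $J$, or $i \in J_n \setminus J$ for some $n$ and $\pi(N_i) \subseteq M_{S_i} \subseteq M_{J_{n+1}} \subseteq M_{J'}$. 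This closure argument, and the bookkeeping of the transfinite recursion below, is the main obstacle; countable generation of the $N_i$ is exactly what keeps the increments countable.

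Next I would build, by transfinite recursion, an increasing chain $(J_\alpha)_{\alpha \le \sigma}$ of good subsets of $I$ with $J_0 = \emptyset$, $J_\sigma = I$, $J_\alpha = \bigcup_{\beta < \alpha} J_\beta$ at limit stages, and, at successor stages, $J_{\alpha+1} \setminus J_\alpha$ countable and nonempty: after well-ordering $I$, apply the key input at stage $\alpha$ with $X = \{i_\alpha\}$, where $i_\alpha$ is the least index not in $J_\alpha$; limit stages are good by the remark above; and the recursion terminates because the $J_\alpha$ strictly increase. Setting $M_\alpha = M_{J_\alpha}$ gives a Kaplansky d\'evissage of $M$ in the sense of Definition \ref{definition-devissage}: (0), (1), (2) are immediate, (3) holds since $M_{\alpha+1} = M_\alpha \oplus M_{J_{\alpha+1} \setminus J_\alpha}$, and (4) holds since $M_{\alpha+1}/M_\alpha \cong M_{J_{\alpha+1} \setminus J_\alpha}$ is countably generated.

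Finally I would put $P_\alpha = \pi(M_\alpha) = M_\alpha \cap P$ and check that $(P_\alpha)$ is a Kaplansky d\'evissage of $P$. Properties (0), (1), (2) follow by intersecting the corresponding properties of $(M_\alpha)$ with $P$. For (3): if $p \colon M_{\alpha+1} \to M_\alpha$ is the projection along $M_{J_{\alpha+1} \setminus J_\alpha}$, then $p(P_{\alpha+1}) \subseteq M_\alpha$, so $\pi \circ p$ restricts to a map $P_{\alpha+1} \to P_\alpha$ which is the identity on $P_\alpha$ (as $p$ and $\pi$ both fix elements of $P_\alpha$), exhibiting $P_\alpha$ as a direct summand of $P_{\alpha+1}$. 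For (4): goodness of $J_\alpha$ and $J_{\alpha+1}$ gives $M_{\alpha+1}/M_\alpha \cong P_{\alpha+1}/P_\alpha \oplus (M_{\alpha+1}\cap Q)/(M_\alpha \cap Q)$, so $P_{\alpha+1}/P_\alpha$ is a quotient of the countably generated module $M_{\alpha+1}/M_\alpha$, hence countably generated. By Lemma \ref{lemma-Kaplansky-devissage}, $P$ is a direct sum of countably generated $R$-modules.
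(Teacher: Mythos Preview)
Your proof is correct and follows essentially the same approach as the paper: both construct a Kaplansky d\'evissage of $M$ in which each $M_\alpha$ has the form $\bigoplus_{i \in J_\alpha} N_i$ and is stable under the projection onto $P$, then intersect with $P$ to obtain a Kaplansky d\'evissage of $P$. Your organization via ``good'' subsets $J \subseteq I$ (those with $\pi(M_J) \subseteq M_J$) and the countable closure lemma is a cleaner packaging of the same iteration the paper carries out element-by-element via its infinite matrix $(x_{mn})$.
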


\begin{proof}
Write $M = P \oplus Q$.  We are going to construct a Kaplansky d\'evissage $(M_{\alpha})_{\alpha \in S}$ of $M$ which, in addition to the defining properties (0)-(4), satisfies:
\begin{enumerate}
\item[(5)] Each $M_{\alpha}$ is a direct summand of $M$;

\item[(6)] $M_{\alpha} = P_{\alpha} \oplus Q_{\alpha}$, where $P_{\alpha} =P \cap M_{\alpha}$ and $Q = Q \cap M_{\alpha}$.
\end{enumerate}
(Note: if properties (0)-(2) hold, then in fact property (3) is equivalent to property (5).)

To see how this implies the theorem, it is enough to show that $(P_{\alpha})_{\alpha \in S}$ forms a Kaplansky d\'evissage of $P$.  Properties (0), (1), and (2) are clear.  By (5) and (6) for $(M_{\alpha})$, each $P_{\alpha}$ is a direct summand of $M$.  Since $P_{\alpha} \subset P_{\alpha + 1}$, this implies $P_{\alpha}$ is a direct summand of $P_{\alpha + 1}$; hence (3) holds for $(P_{\alpha})$.  For (4), note that
\[ 
M_{\alpha +1}/M_{\alpha} \cong P_{\alpha+1}/P_{\alpha} \oplus Q_{\alpha+1}/Q_{\alpha},
\]
so $P_{\alpha+1}/P_{\alpha}$ is countably generated because this is true of $M_{\alpha+1}/M_{\alpha}$.

It remains to construct the $M_{\alpha}$.  Write $M = \bigoplus_{i \in I} N_i$ where each $N_i$ is a countably generated $R$-module.  Choose a well-ordering of $I$.  By transfinite induction we are going to define an increasing family of submodules $M_{\alpha}$ of $M$, one for each ordinal $\alpha$, such that $M_{\alpha}$ is a direct sum of some subset of the $N_i$.

For $\alpha = 0$ let $M_{0} = 0$.  If $\alpha$ is a limit ordinal and $M_{\beta}$ has been defined for all $\beta < \alpha$, then define $M_{\alpha} = \bigcup_{\beta < \alpha} M_{\beta}$.  Since each $M_{\beta}$ for $\beta < \alpha$ is a direct sum of a subset of the $N_i$, the same will be true of $M_{\alpha}$.  If $\alpha+1$ is a successor ordinal and $M_{\alpha}$ has been defined, then define $M_{\alpha+1}$ as follows.  If $M_{\alpha} = M$, then let $M_{\alpha+1} = M$.  If not, choose the smallest $j \in I$ such that $N_{j}$ is not contained in $M_{\alpha}$.  We will construct an infinite matrix $(x_{mn}), m,n = 1, 2, 3, \dots$ such that: 
\begin{enumerate}
\item $N_j$ is contained in the submodule of $M$ generated by the entries $x_{mn}$; 
\item if we write any entry $x_{k\ell}$ in terms of its $P$- and $Q$-components, $x_{k\ell} = y_{k\ell} + z_{k\ell}$, then the matrix $(x_{mn})$ contains a set of generators for each $N_i$ for which $y_{k\ell}$ or $z_{k\ell}$ has nonzero component.
\end{enumerate}
Then we define $M_{\alpha+1}$ to be the submodule of $M$ generated by $M_{\alpha}$ and all $x_{mn}$; by property (2) of the matrix $(x_{mn})$, $M_{\alpha+1}$ will be a direct sum of some subset of the $N_i$.  To construct the matrix $(x_{mn})$, let $x_{11},x_{12},x_{13}, \dots$ be a countable set of generators for $N_j$.  Then if $x_{11} = y_{11} + z_{11}$ is the decomposition into $P$- and $Q$-components, let $x_{21},x_{22},x_{23}, \dots$ be a countable set of generators for the sum of the $N_i$ for which $y_{11}$ or $z_{11}$ have nonzero component.  Repeat this process on $x_{12}$ to get elements $x_{31}, x_{32}, \dots$, the third row of our matrix.  Repeat on $x_{21}$ to get the fourth row, on $x_{13}$ to get the fifth, and so on, going down along successive anti-diagonals as indicated below:
\[
\left(
\vcenter{\xymatrix@R=2mm@C=2mm{
x_{11} & x_{12} \ar[dl] & x_{13} \ar[dl] & x_{14} \ar[dl] & \cdots  \\
x_{21} & x_{22} \ar[dl] & x_{23} \ar[dl] & \cdots  \\
x_{31} & x_{32} \ar[dl] & \cdots \\
x_{41} & \cdots \\
\cdots 
}}
\right).
\]

Transfinite induction on $I$ (using the fact that we constructed $M_{\alpha+1}$ to contain $N_{j}$ for the smallest $j$ such that $N_{j}$ is not contained in $M_{\alpha}$) shows that for each $i \in I$, $N_{i}$ is contained in some $M_{\alpha}$.  Thus, there is some large enough ordinal $S$ satisfying: for each $i \in I$ there is $\alpha \in S$ such that $N_{i}$ is contained in $M_{\alpha}$.  This means $(M_{\alpha})_{\alpha \in S}$ satisfies property (1) of a Kaplansky d\'evissage of $M$.  The family $(M_{\alpha})_{\alpha \in S}$ moreover satisfies the other defining properties, and also (5) and (6) above: properties (0), (2), (4), and (6) are clear by construction;  property (5) is true because each $M_{\alpha}$ is by construction a direct sum of some $N_{i}$; and (3) is implied by (5) and the fact that $M_{\alpha} \subset M_{\alpha+1}$.
\end{proof}

\noindent
As a corollary we get the result for projective modules stated at the beginning of the section.
\begin{theorem}
\label{theorem-projective-direct-sum}
If $P$ is a projective $R$-module, then $P$ is a direct sum of countably generated projective $R$-modules.
\end{theorem}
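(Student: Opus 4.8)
The plan is to exhibit $P$ as a direct summand of a free module and then invoke Theorem \ref{theorem-kaplansky-direct-sum}. Concretely: since $P$ is projective, choose any surjection $\pi: F \rightarrow P$ from a free module $F = R^{(I)}$; projectivity provides a section of $\pi$, so $P$ is isomorphic to a direct summand of $F$. Now $F = \bigoplus_{i \in I} R$ is manifestly a direct sum of countably generated---indeed cyclic---$R$-modules, so Theorem \ref{theorem-kaplansky-direct-sum} applies with $M = F$ and gives a decomposition $P \cong \bigoplus_{j \in J} P_j$ in which each $P_j$ is countably generated.

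To finish I would verify that each $P_j$ is projective. Each $P_j$ is a direct summand of $P$, and $P$ is a direct summand of the free module $F$; composing these inclusions-with-complements shows $P_j$ is a direct summand of $F$, hence projective. Therefore $P$ is a direct sum of countably generated projective $R$-modules, as claimed.

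I do not expect any real obstacle here: essentially all the content is contained in Theorem \ref{theorem-kaplansky-direct-sum}, and the only point requiring (minimal) care is the transitivity of the relation ``is a direct summand of,'' which is precisely what lets us upgrade ``$P_j$ is a summand of $P$'' to ``$P_j$ is a summand of a free module'' and thereby conclude projectivity of the pieces.
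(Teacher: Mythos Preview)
Your proposal is correct and follows essentially the same approach as the paper: the paper's proof is the one-line observation that a projective module is a direct summand of a free module, together with an appeal to Theorem~\ref{theorem-kaplansky-direct-sum}. You have simply unpacked this a bit more explicitly, including the (easy) verification that the countably generated summands are themselves projective.
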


\begin{proof}
A module is projective if and only if it is a direct summand of a free module, so this follows from Theorem \ref{theorem-kaplansky-direct-sum}.
\end{proof}

\subsection{Projective modules over a local ring}
\noindent
In the remainder of this section we prove a result of independent interest from the rest of the paper: a projective module $M$ over a local ring is free (Theorem \ref{theorem-projective-free-over-local-ring} below).  Note that with the additional assumption that $M$ is finite, this result is elementary (\cite[Lemma 7.65.4]{stacks-project}).  In general, by Theorem \ref{theorem-projective-direct-sum}, we have:

\begin{lemma}
\label{lemma-projective-free}
Let $R$ be a ring.  Then every projective $R$-module is free if and only if every countably generated projective $R$-module is free.
\end{lemma}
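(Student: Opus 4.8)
The plan is to prove Lemma \ref{lemma-projective-free} by a straightforward reduction using Theorem \ref{theorem-projective-direct-sum}. The ``only if'' direction is trivial: if every projective $R$-module is free, then in particular every countably generated projective $R$-module is free. So the content is in the ``if'' direction, and the main tool is the decomposition of an arbitrary projective module into a direct sum of countably generated projective modules.

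For the ``if'' direction, suppose every countably generated projective $R$-module is free, and let $P$ be an arbitrary projective $R$-module. By Theorem \ref{theorem-projective-direct-sum} we may write $P \cong \bigoplus_{i \in I} P_i$ where each $P_i$ is a countably generated projective $R$-module. Here I should note that the pieces $P_i$ really are projective: each is a direct summand of $P$ (being a summand of a direct sum), and a direct summand of a projective module is projective. By hypothesis each $P_i$ is then free, say $P_i \cong R^{(J_i)}$ for some set $J_i$. Therefore $P \cong \bigoplus_{i \in I} R^{(J_i)} \cong R^{(\coprod_i J_i)}$ is free, as a direct sum of free modules is free.

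There is essentially no obstacle here; the only thing to be careful about is making explicit that the summands produced by Theorem \ref{theorem-projective-direct-sum} are themselves projective, so that the hypothesis applies to them — but the theorem statement already says ``direct sum of countably generated projective $R$-modules'', so even that is handed to us. The lemma is really just a convenient repackaging of Theorem \ref{theorem-projective-direct-sum}, isolating the observation that to prove all projectives are free it suffices to handle the countably generated case. I would write the proof in two or three sentences.

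In summary, the proof is: the forward implication is immediate; for the reverse, apply Theorem \ref{theorem-projective-direct-sum} to write any projective $P$ as $\bigoplus_i P_i$ with $P_i$ countably generated projective, invoke the hypothesis to get each $P_i$ free, and conclude $P$ is free since a direct sum of free modules is free.
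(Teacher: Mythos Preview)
Your proof is correct and matches the paper's approach exactly: the paper simply states the lemma as an immediate consequence of Theorem \ref{theorem-projective-direct-sum}, and your argument spells out the obvious deduction. There is nothing to add.
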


\noindent
Here is a criterion for a countably generated module to be free.
\begin{lemma}
\label{lemma-freeness-criteria}
Let $M$ be a countably generated $R$-module.  Suppose any direct summand $N$ of $M$ satisfies: any element of $N$ is contained in a free direct summand of $N$.  Then $M$ is free.
\end{lemma}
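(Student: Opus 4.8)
The plan is to build a countable chain of free direct summands of $M$ whose union is all of $M$, with each one a direct summand of the next, and then recognize $M$ as the resulting countable direct sum of free modules.

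First I would fix a countable generating set $x_1, x_2, x_3, \dots$ of $M$. I will construct inductively an increasing sequence of submodules $F_1 \subseteq F_2 \subseteq F_3 \subseteq \cdots$ of $M$ such that each $F_n$ is a free direct summand of $M$, each $F_n$ is a direct summand of $F_{n+1}$, and $x_1, \dots, x_n \in F_n$. To start, apply the hypothesis with $N = M$ to the element $x_1$: it lies in some free direct summand $F_1$ of $M$. For the inductive step, suppose $F_n$ has been constructed, and write $M = F_n \oplus N$ for some complement $N$. Let $x_{n+1} = u + v$ with $u \in F_n$, $v \in N$. Since $N$ is a direct summand of $M$, the hypothesis applies to $N$: the element $v$ lies in a free direct summand $G$ of $N$. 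Then set $F_{n+1} = F_n \oplus G$. This is a direct summand of $F_n \oplus N = M$ (and in particular of itself), it is free as a direct sum of two free modules, it contains $F_n$ as a direct summand, and it contains $u + v = x_{n+1}$ as well as $x_1, \dots, x_n$. So the induction goes through.

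Now $M = \bigcup_n F_n$ since the $F_n$ exhaust the generators. Setting $M_0 = 0$ and $M_n = F_n$ for $n \geq 1$, indexed by the ordinal $\omega + 1$ (or just $\omega$, taking the union at the top), the family $(M_n)$ is a direct sum d\'evissage of $M$: property (0) holds by construction, property (1) holds since the $F_n$ cover a generating set, property (2) is vacuous at the single limit ordinal $\omega$ if we include it with $M_\omega = \bigcup_n F_n = M$, and property (3) holds because each $F_n$ is a direct summand of $F_{n+1}$. By Lemma \ref{lemma-direct-sum-devissage}, $M \cong \bigoplus_{n} F_{n+1}/F_n$. Each successive quotient $F_{n+1}/F_n$ is isomorphic to the free module $G$ appearing at stage $n$ (since $F_{n+1} = F_n \oplus G$), hence free. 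Therefore $M$ is a direct sum of free modules, so $M$ is free.

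The only place requiring care is the inductive step, where one must feed the hypothesis the right submodule: it is essential that the hypothesis is stated for an arbitrary direct summand $N$ of $M$, not just for $M$ itself, because after peeling off $F_n$ we need to find a free direct summand of the complement $N$ containing the $N$-component of $x_{n+1}$, and then reassemble. I expect no genuine obstacle beyond keeping track of the bookkeeping that each $F_{n+1}$ remains a direct summand of all of $M$ (not merely of $F_{n+1}$), which is what lets the hypothesis be reapplied at the next stage.
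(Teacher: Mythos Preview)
Your proposal is correct and follows essentially the same approach as the paper: both fix a countable generating set and inductively peel off a free direct summand of the current complement containing (the relevant component of) the next generator. The paper's $F_i$ are precisely your successive pieces $G$, and its partial sums $\bigoplus_{i=1}^n F_i$ are your $F_n$; the paper simply concludes $M = \bigoplus_i F_i$ directly where you invoke Lemma~\ref{lemma-direct-sum-devissage}, but this is the same argument with the elementary countable case of that lemma made explicit.
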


\begin{proof}
Let $x_1, x_2, \dots$ be a countable set of generators for $M$.  By the assumption on $M$, we can construct by induction free $R$-modules $F_1,F_2, \dots$ such that for every positive integer $n$, $\bigoplus_{i=1}^{n} F_i$ is a direct summand of $M$ and contains $x_1, \dots, x_n$.  Then $M = \bigoplus_{i = 1}^{\infty} F_i$.
\end{proof}

\begin{lemma}
\label{lemma-projective-freeness-criteria}
Let $P$ be a projective module over a local ring $R$.  Then any element of $P$ is contained in a free direct summand of $P$.
\end{lemma}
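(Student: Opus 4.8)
The plan is to first reduce to the countably generated case, then dispose of a prototype case, and then run Kaplansky's inductive construction in general. Write $P$ as a direct summand of a free module $F = P \oplus Q$, fix a basis $(e_i)$ of $F$, and let $p, q \colon F \to F$ be the idempotent endomorphisms with images $P$ and $Q$; let $x \in P$ be given, and assume $x \neq 0$. The first move is to reduce to the countably generated case: by Theorem \ref{theorem-projective-direct-sum} we may write $P = \bigoplus_\alpha P_\alpha$ with each $P_\alpha$ countably generated projective, and $x$ lies in a finite subsum $P'$, which is a countably generated projective direct summand of $P$; since a free direct summand of $P'$ is one of $P$, we may assume $P$ is countably generated, and then, choosing a surjection $R^{(\mathbf{N})} \to P$ and splitting it, that $F = R^{(\mathbf{N})}$. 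Next comes the prototype case: writing $\mathfrak{m}$ for the maximal ideal, $F = P \oplus Q$ gives $\mathfrak{m}F = \mathfrak{m}P \oplus \mathfrak{m}Q$, so $\mathfrak{m}P = \mathfrak{m}F \cap P$, and therefore $x \notin \mathfrak{m}P$ precisely when some coordinate $a_{i_0}$ of $x = \sum a_i e_i$ is a unit; in that case $\{x\} \cup \{e_i : i \neq i_0\}$ is again a basis of $F$, so $Rx$ is a free rank-one direct summand of $F$, hence, being contained in $P$, also a direct summand of $P$, and it contains $x$. This case is then finished.

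In general $x$ may lie in $\mathfrak{m}P$, where the residue field sees nothing about $x$, and one follows Kaplansky. The aim is to construct, by a countable inductive process, a free submodule $L$ of $F$ with $x \in L \subseteq P$ --- equivalently, a new basis $(f_n)$ of $F$ such that $x$ lies in the span of those $f_n$ that happen to lie in $P$ --- since such an $L$, being spanned by part of a basis of $F$ and contained in $P$, is automatically a free direct summand of $P$. Each step attempts the rank-one extraction above: using that $q(x) = 0$ yields a relation $\sum a_i q(e_i) = 0$ in the flat module $Q$, which is trivial by the equational criterion for flatness (Lemma \ref{lemma-flat-eq}), one produces suitable elements of $P$ and coordinate changes of $F$ that strip off a free rank-one direct summand on which $x$ has a component, passes to a complementary direct summand of $P$ (again projective over $R$, so that the argument recurses), and iterates, all the while maintaining a partial basis of $F$ part of which is known to lie in $P$.

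The step I expect to be the real obstacle is exactly this last construction. The rank-one case is immediate, but when $x \in \mathfrak{m}P$ there is no unit coordinate to exploit directly, so the extraction must be bootstrapped through flatness of $Q$; and the genuine difficulty is the simultaneous bookkeeping forcing the module obtained in the limit to be at once \emph{free}, a \emph{direct summand of $P$}, and to \emph{contain $x$}, together with checking that the process terminates or converges appropriately. The remaining ingredients --- the reduction to the countably generated case and the rank-one case --- are routine.
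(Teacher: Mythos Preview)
Your proposal is incomplete: you handle only the case $x \notin \mathfrak{m}P$, and for the remaining case you describe a hoped-for inductive extraction without actually carrying it out. You yourself identify this as the obstacle, and indeed the sketch (``strip off a free rank-one direct summand on which $x$ has a component, pass to a complement, iterate'') does not obviously terminate or converge, and no concrete mechanism is given for producing the successive rank-one pieces when all coordinates of $x$ lie in $\mathfrak{m}$. So as it stands this is not a proof.

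The paper's argument is entirely different, and much shorter; it avoids the case split, the reduction to countable generation, and any limiting process. The key idea you are missing is a minimality trick. Write $F = P \oplus Q$ free, and among all bases of $F$ choose one in which the expression $x = \sum_{i=1}^{n} a_i e_i$ has the \emph{fewest} terms. Minimality forces that no $a_j$ is an $R$-linear combination of the other $a_i$ (else a change of basis would shorten the expression). Now decompose $e_i = y_i + z_i$ with $y_i \in P$, $z_i \in Q$, and write $y_i = \sum_{j=1}^{n} b_{ij} e_j + t_i$ with $t_i$ supported off $e_1,\dots,e_n$. Since $x \in P$ one has $x = \sum_i a_i y_i$, and comparing coefficients of $e_j$ gives $a_j = \sum_i a_i b_{ij}$. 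The minimality condition then forces each off-diagonal $b_{ij}$ to be a non-unit and each $1 - b_{ii}$ to be a non-unit; over a local ring this makes $(b_{ij})$ invertible. Hence $e_i \mapsto y_i$ (and fixing the rest of the basis) is an automorphism of $F$, so $y_1,\dots,y_n$ span a free rank-$n$ direct summand $N$ of $F$ contained in $P$, hence a direct summand of $P$, with $x = \sum_i a_i y_i \in N$. No recursion, no countability reduction, no distinction between $x \in \mathfrak{m}P$ and $x \notin \mathfrak{m}P$ is needed.
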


\begin{proof}
Since $P$ is projective it is a direct summand of some free $R$-module $F$, say $F = P \oplus Q$.  Let $x \in P$ be the element that we wish to show is contained in a free direct summand of $P$.  Let $B$ be a basis of $F$ such that the number of basis elements needed in the expression of $x$ is minimal, say $x = \sum_{i=1}^n a_i e_i$ for some $e_i \in B$ and $a_i \in R$.  Then no $a_j$ can be expressed as a linear combination of the other $a_i$; for if $a_j = \sum_{i \neq  j} a_i b_i$ for some $b_i \in R$, then replacing $e_i$ by $e_i + b_ie_j$ for $i \neq j$ and leaving unchanged the other elements of $B$, we get a new basis for $F$ in terms of which $x$ has a shorter expression.

Let $e_i = y_i + z_i, y_i \in P, z_i \in Q$ be the decomposition of $e_i$ into its $P$- and $Q$-components.  Write $y_i = \sum_{j=1}^{n} b_{ij} e_j + t_i$, where $t_i$ is a linear combination of elements in $B$ other than $e_1, \dots, e_n$.  To finish the proof it suffices to show that the matrix $(b_{ij})$ is invertible.  For then the map $F \rightarrow F$ sending $e_i \mapsto y_i$ for $i=1, \dots, n$ and fixing $B \setminus \{e_1, \dots, e_n\}$ is an isomorphism, so that $y_1, \dots, y_n$ together with $B \setminus \{e_1, \dots, e_n\}$ form a basis for $F$.  Then the submodule $N$ spanned by $y_1, \dots, y_n$ is a free submodule of $P$; $N$ is a direct summand of $P$ since $N \subset P$ and both $N$ and $P$ are direct summands of $F$; and $x \in N$ since $x \in P$ implies $x = \sum_{i=1}^n a_i e_i = \sum_{i=1}^n a_i y_i$.

Now we prove that $(b_{ij})$ is invertible. Plugging $y_i = \sum_{j=1}^{n} b_{ij} e_j + t_i$ into $\sum_{i=1}^n a_i e_i = \sum_{i=1}^n a_i y_i$ and equating the coefficients of $e_j$ gives $a_j = \sum_{i=1}^n a_i b_{ij}$.  But as noted above, our choice of $B$ guarantees that no $a_j$ can be written as a linear combination of the other $a_i$.  Thus $b_{ij}$ is a non-unit for $i \neq j$, and $1-b_{ii}$ is a non-unit---so in particular $b_{ii}$ is a unit---for all $i$.  But a matrix over a local ring having units along the diagonal and non-units elsewhere is invertible, as its determinant is a unit.
\end{proof}

\begin{theorem}
\label{theorem-projective-free-over-local-ring}
If $P$ is a projective module over a local ring $R$, then $P$ is free.
\end{theorem}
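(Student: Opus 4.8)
The plan is to assemble the three preceding lemmas. By Lemma \ref{lemma-projective-free}, it suffices to show that every \emph{countably generated} projective $R$-module is free, so I would fix a countably generated projective module $P$ and aim to apply the freeness criterion of Lemma \ref{lemma-freeness-criteria} to it.

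The hypothesis of Lemma \ref{lemma-freeness-criteria} asks that every direct summand $N$ of $P$ have the property that each of its elements lies in a free direct summand of $N$. So the next step is to observe that any direct summand $N$ of $P$ is again projective (a direct summand of a direct summand of a free module is a direct summand of that free module), and then to invoke Lemma \ref{lemma-projective-freeness-criteria} with $N$ in place of $P$: since $N$ is projective over the local ring $R$, every element of $N$ is contained in a free direct summand of $N$. This verifies the hypothesis of Lemma \ref{lemma-freeness-criteria}, which then yields that $P$ is free, completing the argument.

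There is essentially no obstacle here — all the real work has been done in Lemma \ref{lemma-projective-freeness-criteria} (the basis-shortening argument and the fact that a matrix over a local ring with unit diagonal and non-unit off-diagonal entries is invertible) and in the d\'evissage reduction of Lemma \ref{lemma-projective-free}. The only minor point to be careful about is that the freeness criterion must be applied to every direct summand of $P$, not just to $P$ itself, which is exactly why Lemma \ref{lemma-projective-freeness-criteria} was stated for an arbitrary projective module over a local ring rather than specialized; invoking it for each summand $N$ is what makes the bookkeeping go through. Thus the proof is a short three-line deduction citing Lemmas \ref{lemma-projective-free}, \ref{lemma-freeness-criteria}, and \ref{lemma-projective-freeness-criteria} in turn.
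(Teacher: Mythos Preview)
Your proposal is correct and matches the paper's proof exactly: the paper simply writes ``Follows from Lemmas \ref{lemma-projective-free}, \ref{lemma-freeness-criteria}, and \ref{lemma-projective-freeness-criteria},'' and your expansion of how these three lemmas fit together is accurate.
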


\begin{proof}
Follows from Lemmas \ref{lemma-projective-free}, \ref{lemma-freeness-criteria}, and \ref{lemma-projective-freeness-criteria}.
\end{proof}

\section{Mittag-Leffler modules}
\noindent
The purpose of this section is to define Mittag-Leffler modules and to discuss their basic properties.
 
\subsection{Mittag-Leffler systems}
In the following, $I$ will be a directed partially ordered set.  Let $(A_i, \varphi_{ji}: A_j \rightarrow A_{i})$ be a directed inverse system of sets or of modules indexed by $I$.  For each $i \in I$, the images $\varphi_{ji}(A_j) \subset A_i$ for $j \geq i$ form a decreasing family.  Let $A'_i = \bigcap_{j \geq i} \varphi_{ji}(A_j)$.  Then $\varphi_{ji}(A'_j) \subset A'_i$ for $j \geq i$, hence by restricting we get a directed inverse system $(A'_{i}, \varphi_{ji}|_{A'_j})$.  From the construction of the limit of an inverse system in the category of sets or modules, we have $\lim A_i = \lim A'_i$.  The Mittag-Leffler condition on $(A_i, \varphi_{ji})$ is that $A'_i$ equals $\varphi_{ji}(A_{j})$ for some $j \geq i$ (and hence equals $\varphi_{ki}(A_k)$ for all $k \geq j$):

\begin{definition}
\label{definition-ML-system}
Let $(A_{i},\varphi_{ji})$ be a directed inverse system of sets over $I$.  Then we say  $(A_{i}, \varphi_{ji})$ is \emph{Mittag-Leffler inverse system} if for each $i \in I$, the decreasing family $\varphi_{ji}(A_{j}) \subset A_i$ for $j \geq i$ stabilizes.  Explicitly, this means that for each $i \in I$, there exists $j \geq i$ such that for $k \geq j$ we have $\varphi_{ki}(A_{k}) = \varphi_{ji}( A_{j})$.  If $(A_{i},\varphi_{ji})$ is a directed inverse system of modules over a ring $R$, we say that it is Mittag-Leffler if the underlying inverse system of sets is Mittag-Leffler.
\end{definition}

\begin{example}
\label{example-ML-surjective-maps}
If $(A_i, \varphi_{ji})$ is a directed inverse system of sets or of modules and the maps $\varphi_{ji}$ are surjective, then clearly the system is Mittag-Leffler.  Conversely, suppose $(A_i, \varphi_{ji})$ is Mittag-Leffler.  Let $A'_{i} \subset A_i$ be the stable image of $\varphi_{ji}(A_j)$ for $j \geq i$.  Then $\varphi_{ji}|_{A'_{j}}: A'_{j} \rightarrow A'_i$ is surjective for $j \geq i$ and $\lim A_i = \lim A'_i$.  Hence the limit of the Mittag-Leffler system $(A_i, \varphi_{ji})$ can also be written as the limit of a directed inverse system over $I$ with surjective maps.  
\end{example}

\begin{lemma}
\label{lemma-ML-limit-nonempty}
Let $(A_{i}, \varphi_{ji})$ be a directed inverse system over $I$.  Suppose $I$ is countable.  If $(A_{i}, \varphi_{ji})$ is Mittag-Leffler and the $A_i$ are nonempty, then $\lim A_i$ is nonempty.
\end{lemma}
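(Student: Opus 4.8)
The plan is to reduce, in two steps, to the case $I = \mathbf{N}$ with surjective transition maps, where the conclusion follows from an elementary inductive construction.

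\emph{Step 1: reduce to surjective transition maps.} By Example \ref{example-ML-surjective-maps}, if $A'_i \subset A_i$ denotes the stabilized image $\varphi_{ji}(A_j)$ for $j \geq i$ sufficiently large, then $\lim A_i = \lim A'_i$ and the restricted system $(A'_i, \varphi_{ji}|_{A'_j})$ has surjective transition maps. Moreover each $A'_i$ is nonempty, since it equals $\varphi_{ji}(A_j)$ for suitable $j$, the image of the nonempty set $A_j$. So we may replace $(A_i,\varphi_{ji})$ by $(A'_i,\varphi_{ji}|_{A'_j})$ and assume henceforth that all $\varphi_{ji}$ are surjective (and the Mittag-Leffler hypothesis is no longer needed).

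\emph{Step 2: reduce to $I = \mathbf{N}$.} Since $I$ is countable and directed, I would extract a cofinal increasing sequence: enumerate $I = \{i_1, i_2, \dots\}$, put $k_1 = i_1$, and inductively choose $k_{n+1} \in I$ with $k_{n+1} \geq k_n$ and $k_{n+1} \geq i_{n+1}$, which is possible by directedness. Then $\{k_n\}$ is a cofinal chain in $I$, and restricting an inverse system along a cofinal sub-directed-set does not change the limit, so $\lim_{i\in I} A_i = \lim_n A_{k_n}$. Relabeling, we are reduced to an inverse system indexed by $\mathbf{N}$ with surjective maps $\varphi_{n+1,n}\colon A_{n+1} \to A_n$ and all $A_n$ nonempty.

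\emph{Step 3: conclude.} Here $\lim A_n$ is the set of sequences $(a_n)$ with $\varphi_{n+1,n}(a_{n+1}) = a_n$ for all $n$, and I build one directly: choose any $a_1 \in A_1$, and having chosen $a_n$, use surjectivity of $\varphi_{n+1,n}$ to choose $a_{n+1} \in A_{n+1}$ with $\varphi_{n+1,n}(a_{n+1}) = a_n$. The resulting $(a_n)$ lies in $\lim A_n$, so the limit is nonempty. The only points requiring any care are the two invariance-of-the-limit claims in Steps 1 and 2, but the first is precisely the content of Example \ref{example-ML-surjective-maps} and the second is a standard fact about cofinal subsets of directed sets; I expect no real obstacle.
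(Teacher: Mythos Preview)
Your proof is correct and follows essentially the same approach as the paper's: reduce to a cofinal chain indexed by $\mathbf{N}$ and to surjective transition maps via Example \ref{example-ML-surjective-maps}, then construct an element by induction. The only difference is that you perform the two reductions in the opposite order (surjective first, then cofinal), which is harmless.
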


\begin{proof}
Let $i_1,i_2, i_3, \dots$ be an enumeration of the elements of $I$.  Define inductively a sequence of elements $j_n \in I$ for $n = 1,2,3, \dots$ by the conditions: $j_1 = i_1$, and $j_n \geq i_n$ and $j_{n} \geq j_{m}$ for $m < n$.  Then the sequence $j_n$ is increasing and forms a cofinal subset of $I$.  Hence we may assume $I =\{1,2,3,\dots \}$.  So by Example \ref{example-ML-surjective-maps} we are reduced to showing that the limit of an inverse system of nonempty sets with surjective maps indexed by the positive integers is nonempty.  This is obvious.  
\end{proof}

\noindent
The Mittag-Leffler condition will be important for us because of the following exactness property.
\begin{lemma}
\label{lemma-ML-exact-sequence}
Let
\[ 0 \rightarrow A_i \xrightarrow{f_i} B_i \xrightarrow{g_i} C_i \rightarrow 0 \]
be an exact sequence of directed inverse systems of abelian groups over $I$.  Suppose $I$ is countable.  If $(A_i)$ is Mittag-Leffler, then
\[ 0 \rightarrow \lim A_i \rightarrow \lim B_i \rightarrow \lim C_i\rightarrow 0 \] 
is exact.
\end{lemma}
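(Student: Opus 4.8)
The plan is to notice that the only non-formal assertion is surjectivity of $\lim B_i \to \lim C_i$, and to extract that from Lemma \ref{lemma-ML-limit-nonempty} by exhibiting an auxiliary inverse system of nonempty sets, indexed by the same countable $I$, which is Mittag-Leffler and whose limit is precisely the set of lifts we seek.

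First I would dispose of left exactness, which uses neither the Mittag-Leffler hypothesis nor countability. Injectivity of $\lim A_i \to \lim B_i$ is immediate from injectivity of each $f_i$. For exactness at $\lim B_i$: if $(b_i) \in \lim B_i$ maps to zero in $\lim C_i$, then each $b_i$ lies in $f_i(A_i) = \ker g_i$, so $b_i = f_i(a_i)$ with $a_i \in A_i$ unique, and compatibility of the $b_i$ under the transition maps together with this uniqueness forces $(a_i) \in \lim A_i$; this is just left exactness of $\lim$.

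For the surjectivity, fix $(c_i) \in \lim C_i$ and set $E_i = g_i^{-1}(c_i) \subseteq B_i$. Writing $\psi_{ji}, \varphi_{ji}, \chi_{ji}$ for the transition maps of $(A_i), (B_i), (C_i)$ respectively, the identity $g_i \circ \varphi_{ji} = \chi_{ji} \circ g_j$ shows $\varphi_{ji}(E_j) \subseteq E_i$ for $j \geq i$, so $(E_i, \varphi_{ji}|_{E_j})$ is a directed inverse system of sets; each $E_i$ is nonempty since $g_i$ is surjective, and an element of $\lim E_i$ is exactly a lift of $(c_i)$ to $\lim B_i$. By Lemma \ref{lemma-ML-limit-nonempty} — this is the one place where countability of $I$ is used — it therefore suffices to check that $(E_i)$ is a Mittag-Leffler inverse system. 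The key observation is that $E_i$ is a coset of the subgroup $f_i(A_i) = \ker g_i$, and, using the identity $\varphi_{ji} \circ f_j = f_i \circ \psi_{ji}$ coming from the map of inverse systems, the subset $\varphi_{ji}(E_j) \subseteq E_i$ is a coset of the subgroup $f_i(\psi_{ji}(A_j))$. Since $(A_i)$ is Mittag-Leffler, for each fixed $i$ the subgroups $f_i(\psi_{ji}(A_j))$ stabilize as $j$ grows to some $f_i(A'_i)$; I would then conclude that the cosets $\varphi_{ji}(E_j)$ themselves stabilize, using that $\varphi_{ki}(E_k) = \varphi_{ji}(\varphi_{kj}(E_k)) \subseteq \varphi_{ji}(E_j)$ for $k \geq j \geq i$ and that a coset contained in another coset of the same subgroup must coincide with it.

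I expect the only step requiring a genuine (if small) idea to be this last one: recognizing the $\varphi_{ji}(E_j)$ as cosets of the groups $f_i(\psi_{ji}(A_j))$ and then upgrading the stabilization of those groups — which is literally the Mittag-Leffler property of $(A_i)$ — to stabilization of the cosets. Everything else is formal manipulation of the two commuting squares defining the morphism of inverse systems, together with the appeal to Lemma \ref{lemma-ML-limit-nonempty}.
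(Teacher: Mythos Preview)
Your proposal is correct and follows essentially the same approach as the paper: reduce to showing the auxiliary system $(E_i)$ of fibers is Mittag-Leffler and then invoke Lemma \ref{lemma-ML-limit-nonempty}. The only cosmetic difference is that the paper verifies $\varphi_{ki}(E_k) = \varphi_{ji}(E_j)$ by an explicit element chase, whereas you package the same computation as ``two cosets of the same subgroup, one contained in the other, must coincide.''
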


\begin{proof}
Taking limits of directed inverse systems is left exact, hence we only need to prove surjectivity of $\lim B_i \rightarrow \lim C_i$.  So let $(c_i) \in \lim C_i$.  For each $i \in I$, let $E_i = g_{i}^{-1}(c_i)$, which is nonempty since $g_i: B_i \rightarrow C_i$ is surjective. The system of maps $\varphi_{ji}: B_j \rightarrow B_i$ for $(B_i)$ restrict to maps $E_{j} \rightarrow E_{i}$ which make $(E_i)$ into an inverse system of nonempty sets.  It is enough to show that $(E_i)$ is Mittag-Leffler. For then Lemma \ref{lemma-ML-limit-nonempty} would show $\lim E_i$ is nonempty, and taking any element of $\lim E_i$ would give an element of $\lim B_i$ mapping to $(c_i)$.

By the injection $f_i: A_i \rightarrow B_i$ we will regard $A_i$ as a subset of $B_i$.  Since $(A_i)$ is Mittag-Leffler, if $i \in I$ then there exists $j \geq i$ such that $\varphi_{ki}(A_k) = \varphi_{ji}(A_j)$ for $k \geq j$.  We claim that also $\varphi_{ki}(E_k) = \varphi_{ji}(E_j)$ for $k \geq j$.  Always $\varphi_{ki}(E_k) \subset \varphi_{ji}(E_j)$ for $k \geq j$.  For the reverse inclusion let $e_j \in E_j$, and we need to find $x_k \in E_k$ such that $\varphi_{ki}(x_k) = \varphi_{ji}(e_j)$.  Let $e'_k \in E_k$ be any element, and set $e'_j = \varphi_{kj}(e'_k)$.  Then $g_j(e_j - e'_j) = c_j - c_j = 0$, hence $e_j - e'_j = a_j \in A_j$.  Since $\varphi_{ki}(A_k) = \varphi_{ji}(A_j)$, there exists $a_k \in A_k$ such that $\varphi_{ki}(a_k) = \varphi_{ji}(a_j)$.  Hence
\[ \varphi_{ki}(e'_k + a_k) = \varphi_{ji}(e'_j) + \varphi_{ji}(a_j) = \varphi_{ji}(e_j), \]
so we can take $x_k = e'_k + a_k$.
\end{proof}

\subsection{Mittag-Leffler modules}
\begin{definition}
\label{definition-ML-inductive-system}
Let $(M_i, f_{ij})$ be a directed system of $R$-modules.  We say that $(M_i,f_{ij})$ is a \emph{Mittag-Leffler directed system of modules} if each $M_i$ is of finite presentation and if for every $R$-module $N$, the inverse system $(\Hom_{R}(M_i,N),\Hom_{R}(f_{ij},N))$ is Mittag-Leffler.
\end{definition}

\noindent
We are going to characterize those $R$-modules that are colimits of Mittag-Leffler directed systems of modules.  

\begin{definition}
\label{definition-domination}
Let $f: M \rightarrow N$ and $g: M \rightarrow M'$ be maps of $R$-modules.  Then we say $g$ \emph{dominates} $f$ if for any $R$-module $Q$, we have $\ker(f \otimes_{R} \textnormal{id}_Q) \subset \ker(g \otimes_{R} \textnormal{id}_Q)$.
\end{definition}

\begin{lemma}
\label{lemma-domination-fp}
Let $f: M \rightarrow N$ and $g: M \rightarrow M'$ be maps of $R$-modules.  Then $g$ dominates $f$ if and only if for any finitely presented $R$-module $Q$, we have $\ker(f \otimes_{R} \textnormal{id}_Q) \subset \ker(g \otimes_{R} \textnormal{id}_Q)$. 
\end{lemma}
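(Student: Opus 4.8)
The plan is to prove the nontrivial implication; the other direction is immediate, since if $g$ dominates $f$ then the asserted kernel inclusion holds for \emph{every} $R$-module $Q$, in particular for every finitely presented one. So suppose $\ker(f \otimes_R \textnormal{id}_Q) \subset \ker(g \otimes_R \textnormal{id}_Q)$ for all finitely presented $R$-modules $Q$, and let $Q$ be an arbitrary $R$-module.

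First I would write $Q = \colim_{i \in I} Q_i$ as a directed colimit of finitely presented $R$-modules, exactly as recalled in the proof of Lazard's theorem (Theorem \ref{theorem-lazard}). Since tensor product commutes with directed colimits, $f \otimes_R \textnormal{id}_Q$ is the colimit of the directed system of maps $f \otimes_R \textnormal{id}_{Q_i} \colon M \otimes_R Q_i \to N \otimes_R Q_i$, and likewise for $g$. Now I would invoke exactness of directed colimits in the category of $R$-modules: applying $\colim_i$ to the exact sequences $0 \to \ker(f \otimes_R \textnormal{id}_{Q_i}) \to M \otimes_R Q_i \to N \otimes_R Q_i$ identifies $\ker(f \otimes_R \textnormal{id}_Q)$ with $\colim_i \ker(f \otimes_R \textnormal{id}_{Q_i})$, realized inside $M \otimes_R Q = \colim_i (M \otimes_R Q_i)$ as the union of the images of the submodules $\ker(f \otimes_R \textnormal{id}_{Q_i})$; similarly $\ker(g \otimes_R \textnormal{id}_Q)$ is the union of the images of the $\ker(g \otimes_R \textnormal{id}_{Q_i})$. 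By hypothesis $\ker(f \otimes_R \textnormal{id}_{Q_i}) \subset \ker(g \otimes_R \textnormal{id}_{Q_i})$ inside $M \otimes_R Q_i$ for every $i$, and these inclusions are compatible with the transition maps of the system; passing to the colimit therefore yields $\ker(f \otimes_R \textnormal{id}_Q) \subset \ker(g \otimes_R \textnormal{id}_Q)$, which is what we wanted.

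The only step requiring any care — and it is mild — is the bookkeeping that the kernel of a directed colimit of module maps is the directed colimit of the kernels, compatibly with the ambient modules; this is precisely the exactness of filtered colimits of $R$-modules, and it is what legitimizes comparing the two unions inside $M \otimes_R Q$. I expect no other obstacle.
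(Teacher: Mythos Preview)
Your proof is correct and follows essentially the same approach as the paper: write $Q$ as a directed colimit of finitely presented modules, use that tensor product commutes with directed colimits and that directed colimits are exact to identify the kernels, and pass the termwise inclusion to the colimit. The paper's argument is terser but identical in substance.
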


\begin{proof}
Suppose $\ker(f \otimes_{R} \textnormal{id}_Q) \subset \ker(g \otimes_{R} \textnormal{id}_Q)$ for all finitely presented modules $Q$.  If $Q$ is an arbitrary module, write $Q = \colim_{i \in I} Q_i$ as a colimit of a directed system of finitely presented modules $Q_i$.  Then $\ker(f \otimes_{R} \textnormal{id}_{Q_i}) \subset \ker(g \otimes_{R} \textnormal{id}_{Q_i})$ for all $i$.  Since taking directed colimits is exact and commutes with tensor product, it follows that $\ker(f \otimes_{R} \textnormal{id}_Q) \subset \ker(g \otimes_{R} \textnormal{id}_Q)$.
\end{proof}

\noindent
The above definition of domination is related to the usual notion of domination of maps as follows.
\begin{lemma}
\label{lemma-domination}
Let $f: M \rightarrow N$ and $g: M \rightarrow M'$ be maps of $R$-modules.  Suppose $\coker(f)$ is of finite presentation.  Then $g$ dominates $f$ if and only if $g$ factors through $f$, i.e.\ there exists a module map $h: N \rightarrow M'$ such that $g = h \circ f$.
\end{lemma}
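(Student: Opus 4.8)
The ``if'' direction is immediate and I would dispose of it first: if $g = h \circ f$ then $g \otimes_R \mathrm{id}_Q = (h \otimes_R \mathrm{id}_Q) \circ (f \otimes_R \mathrm{id}_Q)$ for every $R$-module $Q$, so $\ker(f \otimes_R \mathrm{id}_Q) \subset \ker(g \otimes_R \mathrm{id}_Q)$. For the converse the plan is to encode the conclusion ``$g$ factors through $f$'' as the splitting of a short exact sequence whose cokernel term is of finite presentation, and then to quote Lemma~\ref{lemma-universally-exact-split}.

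Concretely, I would form the pushout $P$ of $g \colon M \to M'$ along $f \colon M \to N$, i.e. $P = \coker\bigl((f,-g)\colon M \to N \oplus M'\bigr)$, and write $\iota_N \colon N \to P$, $\iota_{M'} \colon M' \to P$ for the components of the quotient map $N \oplus M' \to P$, so that $\iota_N \circ f = \iota_{M'} \circ g$. The first observation is that $g$ factors through $f$ if and only if $\iota_{M'}$ admits a retraction $r \colon P \to M'$: given such an $r$, the map $h = r \circ \iota_N$ satisfies $h \circ f = g$ since $\iota_N \circ f = \iota_{M'} \circ g$; conversely a factorization $g = h \circ f$ produces the retraction induced by $(n,m') \mapsto h(n) + m'$. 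The second observation is that, directly from the description of $P$, one has $P/\iota_{M'}(M') \cong N/f(M) = \coker(f)$, so there is a complex $M' \xrightarrow{\iota_{M'}} P \to \coker(f) \to 0$ which is exact at $P$ and at $\coker(f)$, and whose last term is of finite presentation by hypothesis.

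It then remains to show that $\iota_{M'}$ is universally injective, and this is where the domination hypothesis enters. Tensoring the right exact sequence $M \xrightarrow{(f,-g)} N \oplus M' \to P \to 0$ with an arbitrary $R$-module $Q$ keeps it right exact; if $\xi \in M' \otimes_R Q$ maps to $0$ in $P \otimes_R Q$, then $(0,\xi)$ lies in the image of $(f \otimes_R \mathrm{id}_Q,\, -g \otimes_R \mathrm{id}_Q)$, so $\xi = -(g \otimes_R \mathrm{id}_Q)(\eta)$ for some $\eta \in M \otimes_R Q$ with $(f \otimes_R \mathrm{id}_Q)(\eta) = 0$; domination forces $(g \otimes_R \mathrm{id}_Q)(\eta) = 0$, whence $\xi = 0$. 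Taking $Q = R$ shows in particular that $\iota_{M'}$ is injective, so $0 \to M' \to P \to \coker(f) \to 0$ is short exact and, by what was just shown, universally exact. Since $\coker(f)$ is of finite presentation, Lemma~\ref{lemma-universally-exact-split} supplies a splitting, i.e. the desired retraction $r$, and the proof is complete. The only nonformal step is the reduction in the previous paragraph --- seeing that the pushout $P$ repackages the problem as the short exact sequence $0 \to M' \to P \to \coker(f) \to 0$ with finitely presented cokernel; after that, the universal injectivity of $\iota_{M'}$ is a direct diagram chase against Definition~\ref{definition-domination}, and the conclusion is a citation of Lemma~\ref{lemma-universally-exact-split}.
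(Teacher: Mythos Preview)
Your proof is correct and follows essentially the same route as the paper: both form the pushout of $f$ and $g$, observe that its cokernel over $M'$ is $\coker(f)$ (hence finitely presented), show that the map $M' \to \text{pushout}$ is universally injective precisely when $g$ dominates $f$, and then invoke Lemma~\ref{lemma-universally-exact-split} to produce the splitting that encodes the factorization. The only cosmetic differences are that you dispose of the easy direction separately and verify universal injectivity by an element chase, whereas the paper packages both directions into the single statement ``$g$ dominates $f$ $\Leftrightarrow$ $f'$ is universally injective $\Leftrightarrow$ $g$ factors through $f$'' and uses a short exact sequence of kernels for the first equivalence.
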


\begin{proof}
Consider the pushout of $f$ and $g$,
\[ 
\xymatrix{
M  \ar[r]^{f} \ar[d]_{g} & N \ar[d]^{g'} \\
M' \ar[r]^{f'} & N'
}
\]
where $N'$ is $M' \oplus N$ modulo the submodule consisting of elements $(g(x),-f(x))$ for $x \in M$.  We are going to show that the two conditions we wish to prove equivalent are each equivalent to $f'$ being universally injective.

From the definition of $N'$ we have a short exact sequence
\[ 0 \rightarrow \ker(f) \cap \ker(g) \rightarrow \ker(f) \rightarrow \ker(f') \rightarrow 0. \]
Since tensoring commutes with taking pushouts, we have such a short exact sequence
\[ 0 \rightarrow \ker(f \otimes \textnormal{id}_{Q} ) \cap \ker(g \otimes \textnormal{id}_{Q}) \rightarrow \ker(f \otimes \textnormal{id}_{Q}) \rightarrow \ker(f' \otimes \textnormal{id}_{Q}) \rightarrow 0 \]
for every $R$-module $Q$.  So $f'$ is universally injective if and only if $\ker(f \otimes \textnormal{id}_{Q} ) \subset \ker(g \otimes \textnormal{id}_{Q})$ for every $Q$, if and only if $g$ dominates $f$.  

On the other hand, from the definition of the pushout it follows that $\coker(f') = \coker(f)$, so $\coker(f')$ is of finite presentation.  Then by Lemma \ref{lemma-universally-exact-split}, $f'$ is universally injective if and only if
\[
0 \rightarrow M' \xrightarrow{f'} N' \rightarrow \coker(f') \rightarrow 0
\]
splits.  This is the case if and only if there is a map $h' : N' \rightarrow M'$ such that $h' \circ f' = \textnormal{id}_{M'}$.  From the universal property of the pushout, the existence of such an $h'$ is equivalent to $g$ factoring through $f$.
\end{proof}

\begin{proposition}
\label{proposition-ML-characterization}
Let $M$ be an $R$-module.  Let $(M_i,f_{ij})$ be a directed system of finitely presented $R$-modules, indexed by $I$, such that $M = \colim M_i$.  Let $f_i: M_i \rightarrow M$ be the canonical map.  The following are equivalent:
\begin{enumerate}
\item For every finitely presented $R$-module $P$ and module map $f: P \rightarrow M$, there exists a finitely presented $R$-module $Q$ and a module map $g: P \rightarrow Q$ such that $g$ and $f$ dominate each other, i.e.\ $\ker(f \otimes_{R} \textnormal{id}_N) = \ker(g \otimes_{R} \textnormal{id}_N)$ for every $R$-module $N$.

\item For each $i \in I$, there exists $j \geq i$ such that $f_{ij}: M_i \rightarrow M_j$ dominates $f_i: M_i \rightarrow M$.

\item For each $i \in I$, there exists $j \geq i$ such that $f_{ij}: M_i \rightarrow M_j$ factors through $f_{ik}: M_i \rightarrow M_k$ for all $k \geq i$.

\item For every $R$-module $N$, the inverse system $(\Hom_{R}(M_i,N),\Hom_{R}(f_{ij},N))$ is Mittag-Leffler.

\item For $N = \prod_{s \in I} M_s$, the inverse system $(\Hom_{R}(M_i,N),\Hom_{R}(f_{ij},N))$ is Mittag-Leffler.
\end{enumerate}
\end{proposition}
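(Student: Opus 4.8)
The plan is to prove $(1) \Rightarrow (2) \Rightarrow (3) \Rightarrow (4) \Rightarrow (5)$ in turn, and then close the circle through $(5) \Rightarrow (3)$ and $(3) \Rightarrow (1)$. Two bookkeeping facts will be used without further comment: the cokernel of a map between finitely presented modules is finitely presented, and for finitely presented $Q$ the functor $\Hom_R(Q,-)$ commutes with directed colimits, so a map $Q \to M = \colim M_k$ factors as $f_k \circ q$ for some $k$, which (by directedness of $I$) may be taken above any prescribed index. Also, since $f_i = f_j \circ f_{ij}$ always gives $\ker(f_{ij} \otimes_R \textnormal{id}_N) \subset \ker(f_i \otimes_R \textnormal{id}_N)$, the statement ``$f_{ij}$ dominates $f_i$'' is the same as ``$f_{ij}$ and $f_i$ dominate each other''.

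For $(1) \Rightarrow (2)$, fix $i$ and apply (1) with $P = M_i$, $f = f_i$ to get a finitely presented $Q$ and $g \colon M_i \to Q$ with $f_i, g$ dominating each other. Although $\coker(f_i)$ need not be finitely presented, $\coker(g)$ is; so from ``$f_i$ dominates $g$'' and Lemma \ref{lemma-domination} we obtain $h \colon Q \to M$ with $f_i = h \circ g$, and then $h = f_k \circ q$ for some $k \geq i$. Comparing $f_k \circ (q \circ g) = f_i = f_k \circ f_{ik}$ and using that $M_i$ is finitely generated yields $k' \geq k$ with $f_{ik'} = f_{kk'} \circ q \circ g$; hence $\ker(f_i \otimes \textnormal{id}_N) = \ker(g \otimes \textnormal{id}_N) \subset \ker(f_{ik'} \otimes \textnormal{id}_N)$ for all $N$, so (2) holds with $j = k'$. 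For $(2) \Rightarrow (3)$, take $j$ as in (2); for any $k \geq i$ the map $f_{ij}$ dominates $f_i$, which dominates $f_{ik}$, hence $f_{ij}$ dominates $f_{ik}$, and since $\coker(f_{ik})$ is finitely presented Lemma \ref{lemma-domination} factors $f_{ij}$ through $f_{ik}$ --- so the same $j$ works for (3). For $(3) \Rightarrow (4)$, fix $N$ and $i$, take $j$ as in (3); the submodules $\mathrm{im}(\Hom_R(f_{ik},N)) \subset \Hom_R(M_i,N)$ decrease with $k$, while writing $f_{ij} = h_k \circ f_{ik}$ shows $\psi \circ f_{ij} = (\psi \circ h_k) \circ f_{ik}$ lies in $\mathrm{im}(\Hom_R(f_{ik},N))$ for every $\psi$, so the images are stable for $k \geq j$. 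Finally $(4) \Rightarrow (5)$ is the case $N = \prod_{s \in I} M_s$.

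The main obstacle is $(5) \Rightarrow (3)$: turning the Mittag-Leffler condition for the single test module $N = \prod_{s \in I} M_s$ into a factoring statement about the transition maps. Fix $i$ and choose $j \geq i$ with $\mathrm{im}(\Hom_R(f_{ik},N)) = \mathrm{im}(\Hom_R(f_{ij},N))$ in $\Hom_R(M_i,N)$ for all $k \geq j$. Via the canonical identification $\Hom_R(M_i,N) = \prod_s \Hom_R(M_i,M_s)$, consider the element $u = (u_s)_s$ with $u_s = f_{is}$ for $s \geq j$ and $u_s = 0$ otherwise. Then $u = \Hom_R(f_{ij},N)(v)$ where $v = (v_s)_s$ has $v_s = f_{js}$ for $s \geq j$ and $v_s = 0$ otherwise, because $f_{js} \circ f_{ij} = f_{is}$. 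By the assumed stability, for each $k \geq j$ there is $w = (w_s)_s \in \Hom_R(M_k,N)$ with $w_s \circ f_{ik} = u_s$ for all $s$; the coordinate $s = j$ reads $w_j \circ f_{ik} = f_{ij}$, so $f_{ij}$ factors through $f_{ik}$. For arbitrary $k \geq i$, choose $l \geq j,k$: then $f_{ij}$ factors through $f_{il} = f_{kl} \circ f_{ik}$, hence through $f_{ik}$. This gives (3). To close the circle, $(3) \Rightarrow (1)$ is routine: given finitely presented $P$ and $f \colon P \to M$, factor $f = f_i \circ p$ and take $j$ as in (3); then $f_{ij}$ dominates $f_i$ (an element of $\ker(f_i \otimes \textnormal{id}_N)$ already dies in some $M_k \otimes_R N$, and $\ker(f_{ik} \otimes \textnormal{id}_N) \subset \ker(f_{ij} \otimes \textnormal{id}_N)$), so taking $Q = M_j$ and $g = f_{ij} \circ p$ we get $\ker(f \otimes \textnormal{id}_N) = (p \otimes \textnormal{id}_N)^{-1}(\ker(f_i \otimes \textnormal{id}_N)) = (p \otimes \textnormal{id}_N)^{-1}(\ker(f_{ij} \otimes \textnormal{id}_N)) = \ker(g \otimes \textnormal{id}_N)$ for every $N$.
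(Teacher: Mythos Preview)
Your proof is correct and follows essentially the same route as the paper's: the cycle $(1)\Rightarrow(2)\Rightarrow(3)\Rightarrow(4)\Rightarrow(5)\Rightarrow(3)$, closing with $(3)\Rightarrow(1)$ (the paper closes with $(2)\Rightarrow(1)$, which is the same argument). In $(1)\Rightarrow(2)$ you are actually more careful than the paper: after factoring $f_i=f_k\circ q\circ g$, you explicitly enlarge to $k'\geq k$ so that $f_{ik'}=f_{kk'}\circ q\circ g$, whereas the paper draws a diagram with $f_{ij}$ and $h'\circ g$ as two maps $M_i\to M_j$ and asserts ``$f_{ij}$ dominates $g$'' without spelling out this enlargement. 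Similarly, in $(5)\Rightarrow(3)$ you handle the case of $k\geq i$ incomparable to $j$ via directedness, which the paper glosses over.
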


\begin{proof}
First we prove the equivalence of (1) and (2).  Suppose (1) holds and let $i \in I$. Corresponding to the map $f_i: M_i \rightarrow M$, we can choose $g: M_i \rightarrow Q$ as in (1).  Since $M_i$ and $Q$ are of finite presentation, so is $\coker(g)$.  Then by Lemma \ref{lemma-domination}, $f_{i} : M_i \rightarrow M$ factors through $g: M_i \rightarrow Q$, say $f_i = h \circ g$ for some $h: Q \rightarrow M$.  Then since $Q$ is finitely presented, $h$ factors through $M_j \rightarrow M$ for some $j \geq i$, say $h = f_j \circ h'$ for some $h': Q \rightarrow M_j$.  In total we have a commutative diagram
\[ 
\xymatrix{
   										  &  M  & 			  \\
M_i \ar[dr]_{g} \ar[ur]^{f_i} \ar[rr]^{f_{ij}} &     & M_j \ar[ul]_{f_j}~ .\\
   										  & Q  \ar[ur]_{h'} &    
}
\]
Thus $f_{ij}$ dominates $g$.  But $g$ dominates $f_i$, so $f_{ij}$ dominates $f_i$.  

Conversely, suppose (2) holds.  Let $P$ be of finite presentation and $f: P \rightarrow M$ a module map.  Then $f$ factors through $f_i: M_i \rightarrow M$ for some $i \in I$, say $f = f_i \circ g'$ for some $g': P \rightarrow M_i$.  Choose by (2) a $j \geq i$ such that $f_{ij}$ dominates $f_i$.  We have a commutative diagram
\[
\xymatrix{
P \ar[d]_{g'} \ar[r]^{f}           & M  \\
M_i \ar[ur]^{f_i} \ar[r]_{f_{ij}} & M_j \ar[u]_{f_j} ~. 
} 
\]
From the diagram and the fact that $f_{ij}$ dominates $f_{i}$, we find that $f$ and $f_{ij} \circ g'$ dominate each other.  Hence taking $g = f_{ij} \circ g' : P \rightarrow M_j$ works.

Next we prove (2) is equivalent to (3).  Let $i \in I$.  It is always true that $f_{i}$ dominates $f_{ik}$ for $k \geq i$, since $f_{i}$ factors through $f_{ik}$.  If (2) holds, choose $j \geq i$ such that $f_{ij}$ dominates $f_{i}$.  Then since domination is a transitive relation, $f_{ij}$ dominates $f_{ik}$ for $k \geq i$. All $M_i$ are of finite presentation, so $\coker(f_{ik})$ is of finite presentation for $k \geq i$.  By Lemma \ref{lemma-domination}, $f_{ij}$ factors through $f_{ik}$ for all $k \geq i$.  Thus (2) implies (3).  On the other hand, if (3) holds then for any $R$-module $N$, $f_{ij} \otimes_{R} \textnormal{id}_{N}$ factors through $f_{ik} \otimes_{R} \textnormal{id}_N$ for $k \geq i$.  So $\ker(f_{ik} \otimes_{R} \textnormal{id}_N) \subset \ker(f_{ij} \otimes_{R} \textnormal{id}_{N})$ for $k \geq i$.  But $\ker(f_i \otimes_{R} \textnormal{id}_{N}: M_i \otimes_{R} N \rightarrow M \otimes_{R} N)$ is the union of $\ker(f_{ik} \otimes_{R} \textnormal{id}_N)$ for $k \geq i$.  Thus $\ker(f_i \otimes_{R} \textnormal{id}_{N}) \subset \ker(f_{ij} \otimes_{R} \textnormal{id}_{N})$ for any $R$-module $N$, which by definition means $f_{ij}$ dominates $f_{i}$.

It is trivial that (3) implies (4) implies (5).  We show (5) implies (3).  Let $N = \prod_{s \in I} M_s$. If (5) holds, then given $i \in I$ choose $j \geq i$ such that
\[ \text{Im}( \Hom(M_j, N) \rightarrow  \Hom(M_i, N)) =  \text{Im}( \Hom(M_k, N) \rightarrow  \Hom(M_i, N)) \]
for all $k \geq j$.  Passing the product over $s \in I$ outside of the $\Hom$'s and looking at the maps on each component of the product, this says
\[  
\text{Im}( \Hom(M_j, M_s) \rightarrow  \Hom(M_i, M_s)) =  \text{Im}( \Hom(M_k, M_s) \rightarrow   \Hom(M_i, M_s))
\]
for all $k \geq j$ and $s \in I$.  Taking $s = j$ we have
\[  
\text{Im}( \Hom(M_j, M_j) \rightarrow  \Hom(M_i, M_j)) =  \text{Im}( \Hom(M_k, M_j) \rightarrow   \Hom(M_i, M_j))
\]
for all $k \geq j$.  Since $f_{ij}$ is the image of $\textnormal{id} \in \Hom(M_j,M_j)$ under $\Hom(M_j, M_j) \rightarrow  \Hom(M_i, M_j)$, this shows that for any $k \geq j$ there is $h \in \Hom(M_k,M_j)$ such that $f_{ij} = h \circ f_{ik}$.  If $j \geq k$ then we can take $h = f_{kj}$.  Hence (3) holds.
\end{proof}

\begin{definition}
Let $M$ be an $R$-module.  We say that $M$ is \emph{Mittag-Leffler} if the equivalent conditions of Proposition \ref{proposition-ML-characterization} hold.
\end{definition}

\begin{remark}
\label{remark-flat-ML}
Let $M$ be a flat $R$-module.  By Lazard's theorem (Theorem \ref{theorem-lazard}) we can write $M = \colim M_i$ where the $M_i$ are free finite $R$-modules.  For $M$ to be Mittag-Leffler, it is enough for the inverse system of duals $(\Hom_{R}(M_i,R),\Hom_{R}(f_{ij},R))$ to be Mittag-Leffler.  This follows from criterion (4) of Proposition \ref{proposition-ML-characterization} and the fact that for a free finite $R$-module $F$, there is a functorial isomorphism $\Hom_{R}(F,R) \otimes_{R} N \cong \Hom_{R}(F,N)$ for any $R$-module $N$.
\end{remark}

\subsection{Interchanging direct products with tensor}
\noindent
Let $M$ be an $R$-module and let $(Q_{\alpha})_{\alpha \in A}$ be a family of $R$-modules.  Then there is a canonical map $M \otimes_{R} \left( \prod_{\alpha \in A} Q_{\alpha} \right) \rightarrow \prod_{\alpha \in A} ( M \otimes_{R} Q_{\alpha})$ given on pure tensors by $x \otimes (q_{\alpha}) \mapsto (x \otimes q_{\alpha})$.  This map is not necessarily injective or surjective, as the following example shows.

\begin{example}
\label{example-Q-not-ML}
Take $R = \mathbf{Z}$, $M = \mathbf{Q}$, and consider the family $Q_n = \mathbf{Z}/n$ for $n \geq 1$.  Then $\prod_{n} (M \otimes Q_n) = 0$.  However there is an injection $\mathbf{Q} \rightarrow M \otimes (\prod_{n} Q_n)$ obtained by tensoring the injection $\mathbf{Z} \rightarrow \prod_{n} Q_n$ by $M$, so $M \otimes (\prod_{n} Q_n)$ is nonzero.  Thus $M \otimes (\prod_{n} Q_n) \rightarrow \prod_{n} (M \otimes Q_n)$ is not injective.

On the other hand, take again $R = \mathbf{Z}$, $M = \mathbf{Q}$, and let $Q_n = \mathbf{Z}$ for $n \geq 1$.  The image of $M \otimes (\prod_{n} Q_n) \rightarrow \prod_{n} (M \otimes Q_n) = \prod_{n} M$ consists precisely of sequences of the form $(a_n/m)_{n \geq 1}$ with $a_n \in \mathbf{Z}$ and $m$ some nonzero integer.  Hence the map is not surjective.
\end{example}

\noindent
We determine below the precise conditions needed on $M$ for the map $M \otimes_{R} \left( \prod_{\alpha} Q_{\alpha} \right) \rightarrow \prod_{\alpha} (M \otimes_{R} Q_{\alpha})$ to be surjective, bijective, or injective for all choices of $(Q_{\alpha})_{\alpha \in A}$.  This is relevant because the modules for which it is injective turn out to be exactly Mittag-Leffler modules (Proposition \ref{proposition-ML-tensor}).  In what follows, if $M$ is an $R$-module and $A$ a set, we write $M^A$ for the product $\prod_{\alpha \in A} M$.

\begin{proposition}
\label{proposition-fg-tensor}
Let $M$ be an $R$-module.  The following are equivalent:
\begin{enumerate}
\item $M$ is finitely generated.
\item For every family $(Q_{\alpha})_{\alpha \in A}$ of $R$-modules, the canonical map $M \otimes_{R} \left( \prod_{\alpha} Q_{\alpha} \right) \rightarrow \prod_{\alpha} (M \otimes_{R} Q_{\alpha})$ is surjective.
\item For every $R$-module $Q$ and every set $A$, the canonical map $M \otimes_{R} Q^{A} \rightarrow (M \otimes_{R} Q)^{A}$ is surjective.
\item For every set $A$, the canonical map $M \otimes_{R} R^{A} \rightarrow M^{A}$ is surjective.
\end{enumerate}
\end{proposition}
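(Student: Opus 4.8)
The plan is to establish the cycle of implications $(1) \Rightarrow (2) \Rightarrow (3) \Rightarrow (4) \Rightarrow (1)$, in which the first three are soft and the last is where the real content lies.

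For $(1) \Rightarrow (2)$, I would choose a surjection $R^n \twoheadrightarrow M$ (possible since $M$ is finitely generated) and contemplate the commutative square whose horizontal arrows are the canonical maps $R^n \otimes_R \prod_\alpha Q_\alpha \to \prod_\alpha (R^n \otimes_R Q_\alpha)$ and $M \otimes_R \prod_\alpha Q_\alpha \to \prod_\alpha (M \otimes_R Q_\alpha)$, and whose vertical arrows are induced by $R^n \to M$. For the free finite module $R^n$ the top arrow is an isomorphism, since (naturally, hence compatibly with the canonical map) both sides are identified with $\prod_\alpha Q_\alpha^{\,n}$. The left vertical arrow is surjective because $-\otimes_R \prod_\alpha Q_\alpha$ is right exact, and the right vertical arrow is surjective because a product of surjections of modules is surjective. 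A diagram chase then forces the bottom arrow to be surjective, which is exactly $(2)$. The implications $(2) \Rightarrow (3)$ and $(3) \Rightarrow (4)$ are immediate specializations: take all the $Q_\alpha$ equal to a single $Q$, and then take $Q = R$ together with the identification $(M \otimes_R R)^A \cong M^A$.

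The hard part will be $(4) \Rightarrow (1)$, and the whole trick is choosing the right test data. I would apply the hypothesis with $A = M$, the underlying set of the module $M$, to the ``tautological'' element $\tau = (x)_{x \in M} \in M^M$ whose $x$-th coordinate equals $x$. By $(4)$ this $\tau$ lies in the image of the canonical map $M \otimes_R R^M \to M^M$, so $\tau$ is the image of some $\xi \in M \otimes_R R^M$. Writing $\xi = \sum_{i=1}^{n} m_i \otimes a_i$ as a finite sum with $m_i \in M$ and $a_i = (a_{i,x})_{x \in M} \in R^M$, and unwinding the definition of the canonical map (pure tensor $x \otimes (r_\alpha) \mapsto (x r_\alpha)$, extended linearly), the $x$-th coordinate of the image of $\xi$ is $\sum_{i=1}^{n} a_{i,x} m_i$. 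Hence the equality $\xi \mapsto \tau$ says precisely that $x = \sum_{i=1}^{n} a_{i,x} m_i$ for every $x \in M$, so $m_1, \dots, m_n$ generate $M$ and $M$ is finitely generated.

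I expect essentially no calculation to be needed anywhere; the only genuine obstacle is the idea of testing $(4)$ on the set $A = M$ and the diagonal/tautological element, after which the conclusion is a one-line unwinding of the definitions. It may be worth remarking explicitly, for use later, that the same commutative-square argument in $(1) \Rightarrow (2)$ shows that if $M$ is finitely generated then the canonical map is surjective for \emph{any} product, not just products of copies of a fixed module.
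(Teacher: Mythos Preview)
Your proposal is correct and follows essentially the same argument as the paper: the same commutative square for $(1)\Rightarrow(2)$, the trivial specializations for $(2)\Rightarrow(3)\Rightarrow(4)$, and the same tautological element $(x)_{x\in M}\in M^M$ for $(4)\Rightarrow(1)$. Your closing remark is superfluous, since surjectivity for an arbitrary product is exactly the content of $(2)$, which you have already proved.
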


\begin{proof}
First we prove (1) implies (2).  Choose a surjection $R^n \rightarrow M$ and consider the commutative diagram
\[
\xymatrix{
R^n \otimes_{R} (\prod_{\alpha} Q_{\alpha})  \ar[r]^{\cong} \ar[d] & \prod_{\alpha} (R^n \otimes_{R} Q_{\alpha}) \ar[d] \\
M \otimes_{R} (\prod_{\alpha} Q_{\alpha})  \ar[r] & \prod_{\alpha} ( M \otimes_{R} Q_{\alpha}).
}
\]
The top arrow is an isomorphism and the vertical arrows are surjections.  We conclude that the bottom arrow is a surjection.

Obviously (2) implies (3) implies (4), so it remains to prove (4) implies (1).  In fact for (1) to hold it suffices that the element $d = (x)_{x \in M}$ of $M^M$ is in the image of the map $f: M \otimes_{R} R^{M} \rightarrow M^M$.  In this case $d = \sum_{i = 1}^{n} f(x_i \otimes a_i)$ for some $x_i \in M$ and $a_i \in R^M$.  If for $x \in M$ we write $p_x: M^M \rightarrow M$ for the projection onto the $x$-th factor, then
\[ x = p_x(d) = \sum_{i = 1}^{n} p_x(f(x_i \otimes a_i)) = \sum_{i=1}^{n} p_x(a_i) x_i. \]
Thus $x_1, \dots, x_n$ generate $M$.
\end{proof}

\begin{proposition}
\label{proposition-fp-tensor}
Let $M$ be an $R$-module.  The following are equivalent:
\begin{enumerate}
\item $M$ is finitely presented.
\item For every family $(Q_{\alpha})_{\alpha \in A}$ of $R$-modules, the canonical map $M \otimes_{R} \left( \prod_{\alpha} Q_{\alpha} \right) \rightarrow \prod_{\alpha} (M \otimes_{R} Q_{\alpha})$ is bijective.
\item For every $R$-module $Q$ and every set $A$, the canonical map $M \otimes_{R} Q^{A} \rightarrow (M \otimes_{R} Q)^{A}$ is bijective.
\item For every set $A$, the canonical map $M \otimes_{R} R^{A} \rightarrow M^{A}$ is bijective.
\end{enumerate}
\end{proposition}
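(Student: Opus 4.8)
The plan is to mirror the proof of Proposition \ref{proposition-fg-tensor}, upgrading ``surjective'' to ``bijective'' throughout. The implications $(2) \Rightarrow (3) \Rightarrow (4)$ are immediate by specialization ($Q_\alpha = Q$, then $Q = R$), so the content is in $(1) \Rightarrow (2)$ and $(4) \Rightarrow (1)$.

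For $(1) \Rightarrow (2)$, I would choose a finite presentation $R^m \to R^n \to M \to 0$. Tensoring with $\prod_\alpha Q_\alpha$ gives an exact sequence $R^m \otimes_R (\prod_\alpha Q_\alpha) \to R^n \otimes_R (\prod_\alpha Q_\alpha) \to M \otimes_R (\prod_\alpha Q_\alpha) \to 0$ by right exactness of tensor. On the other hand, for each $\alpha$ the sequence $R^m \otimes_R Q_\alpha \to R^n \otimes_R Q_\alpha \to M \otimes_R Q_\alpha \to 0$ is exact, and since arbitrary direct products of modules form an exact functor, taking the product over $\alpha$ yields an exact sequence $\prod_\alpha (R^m \otimes_R Q_\alpha) \to \prod_\alpha (R^n \otimes_R Q_\alpha) \to \prod_\alpha (M \otimes_R Q_\alpha) \to 0$. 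The canonical maps assemble into a commutative ladder between these two right-exact sequences, and the first two vertical maps are isomorphisms because the assertion is trivial for the free finite modules $R^m$ and $R^n$ (compare the diagram in the proof of Proposition \ref{proposition-fg-tensor}). The version of the five lemma for two right-exact rows whose first two vertical maps are isomorphisms then forces the third vertical map $M \otimes_R (\prod_\alpha Q_\alpha) \to \prod_\alpha (M \otimes_R Q_\alpha)$ to be an isomorphism, which is $(2)$.

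For $(4) \Rightarrow (1)$: condition $(4)$ in particular gives surjectivity of $M \otimes_R R^A \to M^A$ for every set $A$, so by Proposition \ref{proposition-fg-tensor} the module $M$ is finitely generated. Pick a surjection $R^n \to M$ with kernel $K$, so that $0 \to K \to R^n \to M \to 0$ is exact. For a set $A$, tensoring with $R^A$ gives an exact sequence $K \otimes_R R^A \to R^n \otimes_R R^A \to M \otimes_R R^A \to 0$, while applying $(-)^A$ gives a short exact sequence $0 \to K^A \to (R^n)^A \to M^A \to 0$; these fit into a commutative ladder via the canonical maps, in which $R^n \otimes_R R^A \to (R^n)^A$ is an isomorphism and $M \otimes_R R^A \to M^A$ is an isomorphism by $(4)$. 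A diagram chase—using injectivity of $K^A \to (R^n)^A$ together with injectivity of $M \otimes_R R^A \to M^A$—shows the canonical map $K \otimes_R R^A \to K^A$ is surjective. Since $A$ was arbitrary, Proposition \ref{proposition-fg-tensor} applied to $K$ shows $K$ is finitely generated, and hence $M$ is finitely presented.

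The whole argument is essentially bookkeeping, and I do not expect a genuine obstacle; the two points needing care are the exactness of arbitrary products of modules (used to get the bottom rows exact) and the two short diagram chases, including keeping track of where injectivity versus surjectivity of the canonical maps is actually used in $(4) \Rightarrow (1)$. The one ``idea'' that streamlines everything is to reduce both finite-generation statements to Proposition \ref{proposition-fg-tensor} rather than re-deriving them by hand.
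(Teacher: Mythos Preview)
Your proposal is correct and matches the paper's proof essentially line for line: the paper also proves $(1)\Rightarrow(2)$ by choosing a presentation $R^m\to R^n\to M\to 0$, comparing the two right-exact rows via the canonical maps, and doing a diagram chase, and proves $(4)\Rightarrow(1)$ by first invoking Proposition~\ref{proposition-fg-tensor} to get $M$ finitely generated, then setting up the ladder for $0\to K\to R^n\to M\to 0$ and chasing to conclude $K\otimes_R R^A\to K^A$ is surjective. The only cosmetic difference is that you name the five-lemma variant and explicitly note exactness of products, whereas the paper just says ``diagram chase.''
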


\begin{proof}
First we prove (1) implies (2).  Choose a presentation $R^m \rightarrow R^n \rightarrow M$ and consider the commutative diagram
\[
\xymatrix{
R^m \otimes_{R} (\prod_{\alpha} Q_{\alpha}) \ar[r] \ar[d]^{\cong} & R^m \otimes_{R} (\prod_{\alpha} Q_{\alpha}) \ar[r] \ar[d]^{\cong} & M \otimes_{R} (\prod_{\alpha} Q_{\alpha}) \ar[r] \ar[d] & 0 \\
\prod_{\alpha} (R^m \otimes_{R} Q_{\alpha}) \ar[r] & \prod_{\alpha} (R^n \otimes_{R}Q_{\alpha}) \ar[r] & \prod_{\alpha} (M \otimes_{R} Q_{\alpha}) \ar[r] & 0.
}
\]
The first two vertical arrows are isomorphisms and the rows are exact.  This implies that the map $M \otimes_{R} (\prod_{\alpha} Q_{\alpha})  \rightarrow \prod_{\alpha} ( M \otimes_{R} Q_{\alpha})$ is surjective and, by a diagram chase, also injective.  Hence (2) holds.

Obviously (2) implies (3) implies (4), so it remains to prove (4) implies (1).  From Proposition \ref{proposition-fg-tensor}, if (4) holds we already know that $M$ is finitely generated.  So we can choose a surjection $F \rightarrow M$ where $F$ is free and finite.  Let $K$ be the kernel.  We must show $K$ is finitely generated.  For any set $A$, we have a commutative diagram
\[
\xymatrix{
& K \otimes_{R} R^A \ar[r] \ar[d]_{f_3} & F \otimes_{R} R^A \ar[r] \ar[d]_{f_2}^{\cong} & M \otimes_{R} R^A \ar[r] \ar[d]_{f_1}^{\cong} & 0 \\
0 \ar[r] & K^A \ar[r] & F^A \ar[r] & M^A \ar[r] & 0 .
}
\]
The map $f_1$ is an isomorphism by assumption, the map $f_2$ is a isomorphism since $F$ is free and finite, and the rows are exact.  A diagram chase shows that $f_3$ is surjective, hence by Proposition \ref{proposition-fg-tensor} we get that $K$ is finitely generated. 
\end{proof}

\noindent
We need the following lemma for the next proposition.
\begin{lemma}
\label{lemma-kernel-tensored-fp}
Let $M$ be an $R$-module, $P$ a finitely presented $R$-module, and $f: P \rightarrow M$ a map.  Let $Q$ be an $R$-module and suppose $x \in \ker(P \otimes Q \rightarrow M \otimes Q)$.  Then there exists a finitely presented $R$-module $P'$ and a map $f': P \rightarrow P'$ such that $f$ factors through $f'$ and $x \in \ker(P \otimes Q \rightarrow P' \otimes Q)$.  
\end{lemma}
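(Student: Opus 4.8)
The plan is to deduce the statement from two standard facts: every $R$-module is a filtered colimit of finitely presented $R$-modules, and tensor product commutes with filtered colimits. First I would write $M = \colim_{i \in I} M_i$ as the colimit of a directed system of finitely presented $R$-modules, with canonical maps $f_i \colon M_i \to M$ and transition maps $f_{ij} \colon M_i \to M_j$ for $i \le j$. Since $P$ is of finite presentation, the functor $\Hom_R(P, -)$ commutes with this colimit, so the map $f$ factors as $f = f_i \circ g$ for some $i \in I$ and some $g \colon P \to M_i$.

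Next I would chase the element $x$ through this factorization. Put $x_i = (g \otimes_R \textnormal{id}_Q)(x) \in M_i \otimes_R Q$. Since tensoring commutes with filtered colimits, $M \otimes_R Q = \colim_j (M_j \otimes_R Q)$ with transition maps $f_{ij} \otimes_R \textnormal{id}_Q$, and the image of $x_i$ in this colimit is $(f \otimes_R \textnormal{id}_Q)(x)$, which is $0$ by hypothesis. An element of a filtered colimit of modules that maps to $0$ is already killed by some transition map, so there exists $j \ge i$ with $(f_{ij} \otimes_R \textnormal{id}_Q)(x_i) = 0$ in $M_j \otimes_R Q$.

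Finally I would set $P' = M_j$ and $f' = f_{ij} \circ g \colon P \to P'$. Then $P'$ is finitely presented; the identity $f = f_j \circ f_{ij} \circ g = f_j \circ f'$ exhibits $f$ as factoring through $f'$ (via $f_j \colon P' \to M$); and $(f' \otimes_R \textnormal{id}_Q)(x) = (f_{ij} \otimes_R \textnormal{id}_Q)(x_i) = 0$, which says precisely that $x \in \ker(P \otimes_R Q \to P' \otimes_R Q)$, as desired.

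I do not expect a genuine obstacle in this argument; it is essentially a sequence of formal manipulations with filtered colimits. The one point deserving attention is the step upgrading ``$x_i$ vanishes in the colimit $M \otimes_R Q$'' to ``$x_i$ vanishes in some $M_j \otimes_R Q$'', which relies on the explicit description of filtered colimits of modules. It is also worth recording that finite presentation of $P$ enters only to guarantee that $f$ itself factors through a finite stage $M_i$; the rest of the proof uses nothing about $P$ beyond the existence of this factorization.
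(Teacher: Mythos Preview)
Your argument is correct and is essentially identical to the paper's own proof: both write $M$ as a directed colimit of finitely presented modules, factor $f$ through some $M_i$ using finite presentation of $P$, and then use that tensor commutes with filtered colimits to pass to a further $M_j$ where the image of $x$ vanishes, taking $P'=M_j$. The only cosmetic difference is that you invoke $\Hom_R(P,-)$ commuting with filtered colimits explicitly, whereas the paper simply states that $f$ factors through some $M_j$.
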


\begin{proof}
Write $M$ as a colimit $M = \colim_{i \in I} M_i$ of a directed system of finitely presented modules $M_i$.  Since $P$ is finitely presented, the map $f: P \rightarrow M$ factors through $M_j \rightarrow M$ for some $j \in I$.  Upon tensoring by $Q$ we have a commutative diagram
\[ 
\xymatrix{
& M_j \otimes Q \ar[dr] & \\
P \otimes Q \ar[ur] \ar[rr] & & M \otimes Q .
}
\]
The image $y$ of $x$ in $M_j \otimes Q$ is in the kernel of $M_j \otimes Q \rightarrow M \otimes Q$.  Since $M \otimes Q = \colim_{i \in I} (M_i \otimes Q)$, this means $y$ maps to $0$ in $M_{j'} \otimes Q$ for some $j' \geq j$.  Thus we may take $P' = M_{j'}$ and $f'$ to be the composite $P \rightarrow M_j \rightarrow M_{j'}$.
\end{proof}

\begin{proposition}
\label{proposition-ML-tensor}
Let $M$ be an $R$-module.  The following are equivalent:
\begin{enumerate}
\item $M$ is Mittag-Leffler.
\item For every family $(Q_{\alpha})_{\alpha \in A}$ of $R$-modules, the canonical map $M \otimes_{R} \left( \prod_{\alpha} Q_{\alpha} \right) \rightarrow \prod_{\alpha} (M \otimes_{R} Q_{\alpha})$ is injective.
\end{enumerate}
\end{proposition}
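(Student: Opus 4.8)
The plan is to fix, once and for all, a presentation $M = \colim_{i \in I} M_i$ of $M$ as a directed colimit of finitely presented $R$-modules (any such will do, e.g.\ the one coming from Lazard's theorem or from the construction recalled in the proof of Theorem \ref{theorem-lazard}), with structure maps $f_i \colon M_i \to M$ and transition maps $f_{ij} \colon M_i \to M_j$. The one ingredient that makes everything go is Proposition \ref{proposition-fp-tensor}: since each $M_i$ is finitely presented, the canonical map $M_i \otimes_R (\prod_\alpha Q_\alpha) \to \prod_\alpha (M_i \otimes_R Q_\alpha)$ is an isomorphism, so an element of $M_i \otimes_R (\prod_\alpha Q_\alpha)$ may be freely regarded as a tuple $(\tilde x_\alpha)_\alpha$ with $\tilde x_\alpha \in M_i \otimes_R Q_\alpha$. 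Combined with the fact that $\otimes$ commutes with the filtered colimit (so $M \otimes_R (\prod_\alpha Q_\alpha) = \colim_i (M_i \otimes_R (\prod_\alpha Q_\alpha))$ and an element that dies in $M \otimes_R N$ already dies in some $M_k \otimes_R N$), both implications reduce to bookkeeping about the kernels of the maps $f_{ij} \otimes \mathrm{id}$, which is exactly what the Mittag-Leffler criterion (2) of Proposition \ref{proposition-ML-characterization} controls.

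For (1) $\Rightarrow$ (2): given a family $(Q_\alpha)_{\alpha \in A}$, take $x$ in the kernel of the canonical map and lift it to some $\tilde x \in M_i \otimes_R (\prod_\alpha Q_\alpha)$; writing $\tilde x = (\tilde x_\alpha)_\alpha$ via Proposition \ref{proposition-fp-tensor}, the hypothesis that $x$ dies in $\prod_\alpha (M \otimes_R Q_\alpha)$ says precisely that $\tilde x_\alpha \in \ker(f_i \otimes \mathrm{id}_{Q_\alpha})$ for every $\alpha$. Now invoke criterion (2) of Proposition \ref{proposition-ML-characterization} to produce a single $j \geq i$ with $f_{ij}$ dominating $f_i$, so that $\tilde x_\alpha \in \ker(f_{ij} \otimes \mathrm{id}_{Q_\alpha})$ for all $\alpha$ at once; transporting this back through the isomorphism of Proposition \ref{proposition-fp-tensor} for $M_j$ shows the image of $\tilde x$ in $M_j \otimes_R (\prod_\alpha Q_\alpha)$ is $0$, hence $x = f_j(\tilde x \bmod \cdots) = 0$.

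For (2) $\Rightarrow$ (1): argue by contradiction. If $M$ is not Mittag-Leffler, criterion (2) of Proposition \ref{proposition-ML-characterization} fails at some $i \in I$, so that for every $j \geq i$ the map $f_{ij}$ does not dominate $f_i$; by Lemma \ref{lemma-domination-fp} we may choose, for each such $j$, a finitely presented module $N_j$ and an element $t_j \in \ker(f_i \otimes \mathrm{id}_{N_j})$ with $t_j \notin \ker(f_{ij} \otimes \mathrm{id}_{N_j})$. Put $Q = \prod_{j \geq i} N_j$ and let $t \in M_i \otimes_R Q$ be the element corresponding to $(t_j)_j$ under Proposition \ref{proposition-fp-tensor}. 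Since each $t_j$ dies in $M \otimes_R N_j$, the image of $t$ in $M \otimes_R Q$ maps to $0$ in $\prod_{j} (M \otimes_R N_j)$, so hypothesis (2) forces that image to be $0$; thus $t \in \ker(f_i \otimes \mathrm{id}_Q) = \bigcup_{k \geq i} \ker(f_{ik} \otimes \mathrm{id}_Q)$, and picking $k \geq i$ with $(f_{ik} \otimes \mathrm{id}_Q)(t) = 0$ and reading this componentwise via Proposition \ref{proposition-fp-tensor} for $M_k$ gives $(f_{ik} \otimes \mathrm{id}_{N_j})(t_j) = 0$ for every $j \geq i$; the component $j = k$ then contradicts $t_k \notin \ker(f_{ik} \otimes \mathrm{id}_{N_k})$.

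The step I expect to be the main obstacle is the construction in the second implication: one must manufacture a single test module $Q$ whose tensor product with $M$ detects the failure of domination uniformly over all stages $j \geq i$, and the trick is to take $Q = \prod_{j \geq i} N_j$ the product of the individual witnesses and then feed the stabilization index $k$ back into the indexing set as the label $j = k$ (the ``diagonal'' move). Once that is in place, the rest is routine manipulation with the isomorphisms of Proposition \ref{proposition-fp-tensor} and exactness of filtered colimits.
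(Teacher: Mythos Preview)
Your argument is correct. The direction (1) $\Rightarrow$ (2) is essentially identical to the paper's: both lift $x$ to a stage $M_i$, identify it with a tuple via Proposition \ref{proposition-fp-tensor}, and use the single dominating index $j \geq i$ from criterion (2) of Proposition \ref{proposition-ML-characterization} to kill all components simultaneously.

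The direction (2) $\Rightarrow$ (1), however, follows a genuinely different route. The paper verifies criterion (1) of Proposition \ref{proposition-ML-characterization}: given $f \colon P \to M$ from a finitely presented $P$, it forms the product over \emph{all} pairs $(Q,x)$ with $Q$ finitely presented and $x \in \ker(P \otimes Q \to M \otimes Q)$, and then invokes the auxiliary Lemma \ref{lemma-kernel-tensored-fp} to factor $f$ through a single finitely presented $P'$ that absorbs the entire kernel. Your argument instead verifies criterion (2) by contradiction: you index the product only by $\{j \geq i\}$, pick one witness $(N_j, t_j)$ to the failure of domination at each stage, and then exploit the diagonal coincidence $j = k$ once the colimit forces $t$ to die at some stage $k$. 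Your version is more economical---it bypasses Lemma \ref{lemma-kernel-tensored-fp} entirely and uses a smaller index set---while the paper's version is constructive, explicitly exhibiting the dominating map $P \to P'$ rather than deriving a contradiction.
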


\begin{proof}
First we prove (1) implies (2).  Suppose $M$ is Mittag-Leffler and let $x$ be in the kernel of $M \otimes_{R} (\prod_{\alpha} Q_{\alpha}) \rightarrow \prod_{\alpha} (M \otimes_{R} Q_{\alpha})$.  Write $M$ as a colimit $M = \colim_{i \in I} M_i$ of a directed system of finitely presented modules $M_i$.  Then $M \otimes_{R} (\prod_{\alpha} Q_{\alpha})$ is the colimit of $M_i \otimes_{R} (\prod_{\alpha} Q_{\alpha})$.  So $x$ is the image of an element $x_i \in M_i \otimes_{R} (\prod_{\alpha} Q_{\alpha})$.  We must show that $x_i$ maps to $0$ in $M_j \otimes_{R} (\prod_{\alpha} Q_{\alpha})$ for some $j \geq i$.  Since $M$ is Mittag-Leffler, we may choose $j \geq i$ such that $M_{i} \rightarrow M_j$ and $M_i \rightarrow M$ dominate each other.  Then consider the commutative diagram
\[
\xymatrix{
M \otimes_{R} (\prod_{\alpha} Q_{\alpha}) \ar[r] & \prod_{\alpha} (M \otimes_{R} Q_{\alpha}) \\
M_i \otimes_{R} (\prod_{\alpha} Q_{\alpha}) \ar[r]^{\cong} \ar[d] \ar[u] & \prod_{\alpha} (M_i \otimes_{R} Q_{\alpha}) \ar[d] \ar[u] \\
M_j \otimes_{R} (\prod_{\alpha} Q_{\alpha}) \ar[r]^{\cong} & \prod_{\alpha} (M_j \otimes_{R} Q_{\alpha})
}
\]
whose bottom two horizontal maps are isomorphisms, according to Proposition \ref{proposition-fp-tensor}.  Since $x_i$ maps to $0$ in $\prod_{\alpha} (M \otimes_{R} Q_{\alpha})$, its image in $\prod_{\alpha} (M_i \otimes_{R} Q_{\alpha})$ is in the kernel of the map $\prod_{\alpha} (M_i \otimes_{R} Q_{\alpha}) \rightarrow \prod_{\alpha} (M \otimes_{R} Q_{\alpha})$.  But this kernel equals the kernel of $\prod_{\alpha} (M_i \otimes_{R} Q_{\alpha}) \rightarrow \prod_{\alpha} (M_j \otimes_{R} Q_{\alpha})$ according to the choice of $j$.  Thus $x_i$ maps to $0$ in $\prod_{\alpha} (M_j \otimes_{R} Q_{\alpha})$ and hence to $0$ in $M_j \otimes_{R} (\prod_{\alpha} Q_{\alpha})$.

Now suppose (2) holds. We prove $M$ satisfies formulation (1) of being Mittag-Leffler from Proposition \ref{proposition-ML-characterization}.  Let $f: P \rightarrow M$ be a map from a finitely presented module $P$ to $M$.  Choose a set $B$ of representatives of the isomorphism classes of finitely presented $R$-modules. Let $A$ be the set of pairs $(Q,x)$ where $Q \in B$ and $x \in \ker(P \otimes Q \rightarrow M \otimes Q)$.  For $\alpha = (Q,x) \in A$, we write $Q_{\alpha}$ for $Q$ and $x_{\alpha}$ for $x$.  Consider the commutative diagram
\[
\xymatrix{
M \otimes_{R} (\prod_{\alpha} Q_{\alpha}) \ar[r] & \prod_{\alpha} (M \otimes_{R} Q_{\alpha}) \\
P \otimes_{R} (\prod_{\alpha} Q_{\alpha}) \ar[r]^{\cong} \ar[u] & \prod_{\alpha} (P \otimes_{R} Q_{\alpha}) \ar[u] .
}
\]
The top arrow is an injection by assumption, and the bottom arrow is an isomorphism by Proposition \ref{proposition-fp-tensor}.  Let $x \in P \otimes_{R} (\prod_{\alpha} Q_{\alpha})$ be the element corresponding to $(x_{\alpha}) \in \prod_{\alpha} (P \otimes_{R} Q_{\alpha})$ under this isomorphism.  Then $x \in \ker( P \otimes_{R} (\prod_{\alpha} Q_{\alpha}) \rightarrow M \otimes_{R} (\prod_{\alpha} Q_{\alpha}))$ since the top arrow in the diagram is injective.  By Lemma \ref{lemma-kernel-tensored-fp}, we get a finitely presented module $P'$ and a map $f': P \rightarrow P'$ such that $f: P \rightarrow M$ factors through $f'$ and $x \in \ker(P \otimes_{R} (\prod_{\alpha} Q_{\alpha}) \rightarrow P' \otimes_{R} (\prod_{\alpha} Q_{\alpha}))$.  We have a commutative diagram
\[
\xymatrix{
P' \otimes_{R} (\prod_{\alpha} Q_{\alpha}) \ar[r]^{\cong} & \prod_{\alpha} (P' \otimes_{R} Q_{\alpha}) \\
P \otimes_{R} (\prod_{\alpha} Q_{\alpha}) \ar[r]^{\cong} \ar[u] & \prod_{\alpha} (P \otimes_{R} Q_{\alpha}) \ar[u] .
}
\]
where both the top and bottom arrows are isomorphisms by Proposition \ref{proposition-fp-tensor}.  Thus since $x$ is in the kernel of the left vertical map, $(x_{\alpha})$ is in the kernel of the right vertical map.  This means $x_{\alpha} \in \ker(P \otimes_{R} Q_{\alpha} \rightarrow P' \otimes_{R} Q_{\alpha})$ for every $\alpha \in A$.  By the definition of $A$ this means $\ker(P \otimes_{R} Q \rightarrow P' \otimes_{R} Q) \subset \ker(P \otimes_{R} Q \rightarrow M \otimes_{R} Q)$ for all finitely presented $Q$ and, since $f: P \rightarrow M$ factors through $f': P \rightarrow P'$, actually equality holds.  By Lemma \ref{lemma-domination-fp}, $f$ and $f'$ dominate each other.
\end{proof}

\begin{lemma}
\label{lemma-pure-submodule-ML}
Let $0 \rightarrow M_1 \rightarrow M_2 \rightarrow M_3 \rightarrow 0$ be a universally exact sequence of $R$-modules.  Then:
\begin{enumerate}
\item If $M_2$ is Mittag-Leffler, then $M_1$ is Mittag-Leffler.
\item If $M_1$ and $M_3$ are Mittag-Leffler, then $M_2$ is Mittag-Leffler.
\end{enumerate}
\end{lemma}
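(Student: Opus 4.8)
The plan is to reduce everything to the tensor-product characterization of Mittag-Leffler modules proved in Proposition~\ref{proposition-ML-tensor}: a module $M$ is Mittag-Leffler if and only if for every family $(Q_\alpha)_{\alpha \in A}$ of $R$-modules the canonical map $M \otimes_R (\prod_\alpha Q_\alpha) \to \prod_\alpha (M \otimes_R Q_\alpha)$ is injective. So I fix a family $(Q_\alpha)_{\alpha \in A}$ and set $Q = \prod_\alpha Q_\alpha$. Since the sequence $0 \to M_1 \to M_2 \to M_3 \to 0$ is universally exact, tensoring it with $Q$ gives a short exact sequence $0 \to M_1 \otimes_R Q \to M_2 \otimes_R Q \to M_3 \otimes_R Q \to 0$, and tensoring with each $Q_\alpha$ and then taking the product over $\alpha$ (products being exact in the category of $R$-modules) gives a short exact sequence $0 \to \prod_\alpha (M_1 \otimes_R Q_\alpha) \to \prod_\alpha (M_2 \otimes_R Q_\alpha) \to \prod_\alpha (M_3 \otimes_R Q_\alpha) \to 0$. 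The canonical maps assemble these two short exact sequences into a commutative ladder, and both parts come out of a diagram chase on this ladder.

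For part (1), I would use only the left square of this ladder. The top arrow $M_1 \otimes_R Q \to M_2 \otimes_R Q$ is injective by universal exactness, and the right arrow $M_2 \otimes_R Q \to \prod_\alpha (M_2 \otimes_R Q_\alpha)$ is injective because $M_2$ is Mittag-Leffler; hence the composite $M_1 \otimes_R Q \to \prod_\alpha (M_2 \otimes_R Q_\alpha)$ is injective, which forces the left vertical arrow $M_1 \otimes_R Q \to \prod_\alpha (M_1 \otimes_R Q_\alpha)$ to be injective. Since $(Q_\alpha)$ was arbitrary, Proposition~\ref{proposition-ML-tensor} gives that $M_1$ is Mittag-Leffler.

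For part (2), I would run the full ladder: both rows are exact (as arranged above), and the outer two vertical maps are injective because $M_1$ and $M_3$ are Mittag-Leffler, so a routine diagram chase (the ``four lemma'') yields that the middle vertical map $M_2 \otimes_R Q \to \prod_\alpha (M_2 \otimes_R Q_\alpha)$ is injective; hence $M_2$ is Mittag-Leffler. I do not anticipate any real obstacle here: the only points needing a moment's care are that universal exactness is precisely what makes the tensored top row exact on the left, and that arbitrary products form an exact functor on $R$-modules so that the bottom row is exact as well; once these are in place, both parts are immediate from the four/five lemma.
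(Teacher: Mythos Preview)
Your proposal is correct and is essentially the same argument as the paper's: both set up the commutative ladder with exact rows obtained by tensoring the universally exact sequence with $\prod_\alpha Q_\alpha$ (top row) and with each $Q_\alpha$ followed by the product (bottom row), then invoke Proposition~\ref{proposition-ML-tensor} and a diagram chase. The paper's proof is simply terser, leaving the chase implicit where you have written out the details.
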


\begin{proof}
For any family $(Q_{\alpha})_{\alpha \in A}$ of $R$-modules we have a commutative diagram
\[
\xymatrix{
0 \ar[r] & M_1 \otimes_{R} (\prod_{\alpha} Q_{\alpha}) \ar[r] \ar[d] & M_2 \otimes_{R} (\prod_{\alpha} Q_{\alpha}) \ar[r] \ar[d] & M_3 \otimes_{R} (\prod_{\alpha} Q_{\alpha}) \ar[r] \ar[d] & 0 \\
0 \ar[r] & \prod_{\alpha}(M_1 \otimes Q_{\alpha}) \ar[r] & \prod_{\alpha}(M_2 \otimes Q_{\alpha}) \ar[r] & \prod_{\alpha}(M_3 \otimes Q_{\alpha})\ar[r] & 0
}
\]
with exact rows. Thus (1) and (2) follow from Proposition \ref{proposition-ML-tensor}.
\end{proof}

\begin{lemma}
\label{lemma-direct-sum-ML}
If $M = \bigoplus_{i \in I} M_{i}$ is a direct sum of $R$-modules, then $M$ is Mittag-Leffler if and only if each $M_i$ is Mittag-Leffler.
\end{lemma}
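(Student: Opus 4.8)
The plan is to deduce \emph{both} implications at once from the tensor--product characterization of Mittag-Leffler modules in Proposition \ref{proposition-ML-tensor}: $M$ is Mittag-Leffler if and only if for every family $(Q_\alpha)_{\alpha\in A}$ of $R$-modules the canonical map $\mu_M\colon M\otimes_R(\prod_\alpha Q_\alpha)\to\prod_\alpha(M\otimes_R Q_\alpha)$ is injective. The point is that this condition is manifestly compatible with the decomposition $M=\bigoplus_i M_i$.

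First I would use that tensor product commutes with direct sums to rewrite, for a fixed family $(Q_\alpha)$, the source and target of $\mu_M$ as $M\otimes_R(\prod_\alpha Q_\alpha)\cong\bigoplus_i\bigl(M_i\otimes_R\prod_\alpha Q_\alpha\bigr)$ and $\prod_\alpha(M\otimes_R Q_\alpha)\cong\prod_\alpha\bigoplus_i(M_i\otimes_R Q_\alpha)$. Under these identifications $\mu_M$ becomes the composite
\[
\bigoplus_i\Bigl(M_i\otimes_R\prod_{\alpha}Q_\alpha\Bigr)\longrightarrow\bigoplus_i\prod_{\alpha}(M_i\otimes_R Q_\alpha)\longrightarrow\prod_{\alpha}\bigoplus_i(M_i\otimes_R Q_\alpha),
\]
where the first map is $\bigoplus_i\mu_{M_i}$, with $\mu_{M_i}$ the canonical map attached to $M_i$, and the second is the canonical ``interchange'' map $j$ from a direct sum of products to the product of the direct sums. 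Verifying this factorization is the one computation in the argument; it is routine, checked on generators $\iota_i(x)\otimes(q_\alpha)_\alpha$ with $x\in M_i$ and using naturality of all the maps in sight.

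Next I would observe that $j$ is always injective: an element of $\bigoplus_i\prod_\alpha N_{i\alpha}$ has finite support in $i$, and it maps to zero in $\prod_\alpha\bigoplus_i N_{i\alpha}$ only when every one of its entries vanishes. Hence, for the fixed family $(Q_\alpha)$, the map $\mu_M$ is injective precisely when $\bigoplus_i\mu_{M_i}$ is, which in turn holds precisely when each $\mu_{M_i}$ is injective. Letting $(Q_\alpha)$ range over all families and invoking Proposition \ref{proposition-ML-tensor} at both ends then yields: $M$ is Mittag-Leffler if and only if $\mu_M$ is injective for all families, if and only if $\mu_{M_i}$ is injective for every $i$ and every family, if and only if every $M_i$ is Mittag-Leffler.

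I do not anticipate a genuine obstacle; the only delicate bookkeeping is the commutativity of the diagram identifying $\mu_M$ with $j\circ(\bigoplus_i\mu_{M_i})$, i.e.\ checking that the canonical isomorphisms are compatible with $\mu$. As a sanity check on the ``only if'' half, note that it also follows immediately from Lemma \ref{lemma-pure-submodule-ML}(1): each $M_i$ is a direct summand, hence sits in a split---and therefore universally exact---sequence $0\to M_i\to M\to M/M_i\to 0$. I would mention this as a shortcut, though the uniform argument above already subsumes it.
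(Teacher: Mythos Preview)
Your proof is correct and takes a genuinely different route from the paper's. The paper splits the two directions: for ``only if'' it invokes Lemma~\ref{lemma-pure-submodule-ML}(1) exactly as you note in your closing sanity check, and for ``if'' it first treats finite $I$ via Lemma~\ref{lemma-pure-submodule-ML}(2) and then reduces the general case to the finite one by factoring any map $P\to M$ from a finitely presented $P$ through a finite sub-direct-sum $\bigoplus_{i'\in I'}M_{i'}$, verifying condition~(1) of Proposition~\ref{proposition-ML-characterization}. Your argument instead uses the tensor-product criterion of Proposition~\ref{proposition-ML-tensor} uniformly for both directions, factoring $\mu_M$ as $j\circ\bigl(\bigoplus_i\mu_{M_i}\bigr)$ and exploiting the elementary fact that the interchange map $j\colon\bigoplus_i\prod_\alpha\to\prod_\alpha\bigoplus_i$ is always injective. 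This is slicker and more symmetric: it avoids the finite-versus-infinite case split and does not need to invoke Lemma~\ref{lemma-pure-submodule-ML} at all. The paper's approach, on the other hand, stays closer to the intrinsic definition of Mittag-Leffler via domination and makes the role of finite presentation of $P$ more visible.
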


\begin{proof}
The ``only if'' direction follows from Lemma \ref{lemma-pure-submodule-ML} (1) and the fact that a split short exact sequence is universally exact.  For the converse, first note that if $I$ is finite then this follows from Lemma \ref{lemma-pure-submodule-ML} (2).  For general $I$, if all $M_i$ are Mittag-Leffler then we prove the same of $M$ by verifying condition (1) of Proposition \ref{proposition-ML-characterization}.  Let $f: P \rightarrow M$ be a map from a finitely presented module $P$.  Then $f$ factors as $P \xrightarrow{f'} \bigoplus_{i' \in I'} M_{i'} \hookrightarrow \bigoplus_{i \in I} M_i$ for some finite subset $I'$ of $I$.  By the finite case $\bigoplus_{i' \in I'} M_{i'}$ is Mittag-Leffler and hence there exists a finitely presented module $Q$ and a map $g: P \rightarrow Q$ such that $g$ and $f'$ dominate each other.  Then also $g$ and $f$ dominate each other.
\end{proof}

\subsection{Examples}
\label{section-examples} 
We end this section with some examples and non-examples of Mittag-Leffler modules.

\begin{examples} 
\label{examples-ML}
\begin{enumerate}
\item Any finitely presented module is Mittag-Leffler.  This follows, for instance, from Proposition \ref{proposition-ML-characterization} (1).  In general, it is true that a finitely generated module is Mittag-Leffler if and only it is finitely presented.  This follows from Propositions \ref{proposition-fg-tensor}, \ref{proposition-fp-tensor}, and \ref{proposition-ML-tensor}.

\item A free module is Mittag-Leffler since it satisfies condition (1) of Proposition \ref{proposition-ML-characterization}.

\item By the previous example together with Lemma \ref{lemma-direct-sum-ML}, projective modules are Mittag-Leffler. 
\end{enumerate}
\end{examples}

\noindent
We also want to add to our list of examples power series rings over a Noetherian ring $R$.  This will be a consequence the following lemma.
\begin{lemma}
\label{lemma-flat-ML-criterion}
Let $M$ be a flat $R$-module.  Suppose the following condition holds: if $F$ is a free finite $R$-module and $x \in F \otimes_{R} M$, then there exists a smallest submodule $F'$ of $F$ such that $x \in F' \otimes_{R} M$.  Then $M$ is Mittag-Leffler.
\end{lemma}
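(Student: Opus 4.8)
The plan is to reduce, via Remark \ref{remark-flat-ML}, to showing that the inverse system of $R$-duals of a Lazard presentation of $M$ is Mittag-Leffler, and then to use the hypothesis to identify each stable image explicitly. By Lazard's theorem (Theorem \ref{theorem-lazard}) write $M = \colim_{i \in I} M_i$ with each $M_i$ free and finite, with transition maps $f_{ij} \colon M_i \to M_j$ for $i \le j$ and canonical maps $f_i \colon M_i \to M$. Put $N_i = \Hom_R(M_i,R)$ and $g_{ij} = \Hom_R(f_{ij},R) \colon N_j \to N_i$. By Remark \ref{remark-flat-ML} it is enough to prove $(N_i, g_{ij})$ is Mittag-Leffler. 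Here each $N_i$ is again free and finite, and for $M_i$ free finite there is a natural isomorphism $N_i \otimes_R M \cong \Hom_R(M_i, M)$ carrying $g_{ik} \otimes \mathrm{id}_M$ to precomposition with $f_{ik}$; writing $\xi_i \in N_i \otimes_R M$ for the element corresponding to $f_i$, the relation $f_i = f_k \circ f_{ik}$ becomes $\xi_i = (g_{ik}\otimes \mathrm{id}_M)(\xi_k)$ for $k \ge i$.

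Now fix $i \in I$. Applying the hypothesis to the free finite module $F = N_i$ and the element $\xi_i$, choose the smallest submodule $N_i' \subseteq N_i$ with $\xi_i \in N_i' \otimes_R M$. First I would check the inclusion $N_i' \subseteq \mathrm{Im}(g_{ik})$ for every $k \ge i$: since $M$ is flat, $\mathrm{Im}(g_{ik}) \otimes_R M \to N_i \otimes_R M$ is injective with image $\mathrm{Im}(g_{ik} \otimes \mathrm{id}_M)$, and $\xi_i$ lies in this image by the displayed relation, so minimality of $N_i'$ forces $N_i' \subseteq \mathrm{Im}(g_{ik})$. For the reverse inclusion I would produce a single index $j \ge i$ with $\mathrm{Im}(g_{ik}) \subseteq N_i'$ for all $k \ge j$, as follows. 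Write $\xi_i = \sum_l \psi_l \otimes m_l$ with $\psi_l \in N_i'$ and $m_l \in M$; lift each $m_l$ to an element $m_l'$ of some $M_{j_0}$ with $j_0 \ge i$, and let $\eta \colon M_i \to M_{j_0}$ be the map corresponding to $\sum_l \psi_l \otimes m_l' \in N_i' \otimes_R M_{j_0}$. Then $f_{j_0} \circ \eta = f_i = f_{j_0} \circ f_{ij_0}$, so (using that $M_i$ is finitely generated and $M = \colim M_k$) after passing to a suitable $j \ge j_0$ we obtain $f_{ik} = f_{j_0 k} \circ \eta$ for all $k \ge j$. Consequently, for $\phi \in N_k$,
\[ g_{ik}(\phi) = \phi \circ f_{ik} = (\phi \circ f_{j_0 k}) \circ \eta = \sum_l \phi\bigl(f_{j_0 k}(m_l')\bigr)\,\psi_l, \]
which is an $R$-linear combination of the $\psi_l \in N_i'$, hence lies in $N_i'$. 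Combining the two inclusions gives $\mathrm{Im}(g_{ik}) = N_i'$ for all $k \ge j$, so $(N_i, g_{ij})$ is Mittag-Leffler and therefore, by Remark \ref{remark-flat-ML}, so is $M$.

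I expect the only genuine bookkeeping to be in the last step: passing from $f_{j_0} \circ (\eta - f_{ij_0}) = 0$ to $f_{j_0 j} \circ (\eta - f_{ij_0}) = 0$ for a suitable $j$ (which uses finiteness of $M_i$ together with the description of kernels of maps into a directed colimit), and being careful that $\eta$ genuinely lies in $N_i' \otimes_R M_{j_0}$ rather than merely in $N_i \otimes_R M_{j_0}$ — this is exactly what makes $g_{ik}(\phi)$ land in $N_i'$. The identification $N_i \otimes_R M \cong \Hom_R(M_i,M)$ with its compatibility with composition, and the flatness input $\mathrm{Im}(g_{ik}) \otimes_R M = \mathrm{Im}(g_{ik}\otimes\mathrm{id}_M)$, are routine.
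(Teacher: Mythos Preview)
Your proof is correct and follows essentially the same approach as the paper: both reduce via Remark~\ref{remark-flat-ML} to the Mittag-Leffler property of the dual inverse system $(\Hom_R(M_i,R))$, apply the hypothesis with $F=\Hom_R(M_i,R)$ and $x$ the element corresponding to $f_i$, and show that the images $\mathrm{Im}(g_{ik})$ stabilize to the resulting smallest submodule. The only cosmetic difference is that the paper packages your explicit computation of $g_{ik}(\phi)$ into the observation that $Q_j=\mathrm{Im}(g_{ij})$ is itself the smallest submodule with $f_{ij}\in Q_j\otimes_R M_j$ (using freeness of $M_j$), which lets it conclude $Q_j\subset N_i'$ directly from $f_{ij}\in N_i'\otimes_R M_j$.
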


\begin{proof}
By Theorem \ref{theorem-lazard} we can write $M$ as the colimit $M = \colim_{i \in I} M_i$ of a directed system $(M_i, f_{ij})$ of free finite $R$-modules.  By Remark \ref{remark-flat-ML}, it suffices to show that the inverse system $(\Hom_{R}(M_i, R), \Hom_{R}(f_{ij},R))$ is Mittag-Leffler.  In other words, fix $i \in I$ and for $j \geq i$ let $Q_j$ be the image of $\Hom_{R}(M_j,R) \rightarrow \Hom_{R}(M_i,R)$; we must show that the $Q_{j}$ stabilize.  

Since $M_i$ is free and finite, we can make the identification $\Hom_{R}(M_i,M_j) = \Hom_{R}(M_i,R) \otimes_{R}  M_j$ for all $j$.  Using the fact that the $M_j$ are free, it follows that for $j \geq i$, $Q_j$ is the smallest submodule of $\Hom_{R}(M_i,R)$ such that $f_{ij} \in Q_j \otimes_{R} M_j$.  Under the identification $\Hom_{R}(M_i,M) = \Hom_{R}(M_i,R) \otimes_{R} M$, the canonical map $f_i: M_i \rightarrow M$ is in $\Hom_{R}(M_i,R) \otimes_{R} M$.  By the assumption on $M$, there exists a smallest submodule $Q$ of $\Hom_{R}(M_i,R)$ such that $f_i \in Q \otimes_{R} M$.  We are going to show that the $Q_j$ stabilize to $Q$.

For $j \geq i$ we have a commutative diagram
\[ 
\xymatrix{
Q_j \otimes_{R} M_j \ar[r] \ar[d] & \Hom_{R}(M_i,R) \otimes_{R} M_j \ar[d] \\
Q_j \otimes_{R} M \ar[r] & \Hom_{R}(M_i,R) \otimes_{R} M.
}
\]
Since $f_{ij} \in Q_j \otimes_{R} M_j$ maps to $f_i \in \Hom_{R}(M_i,R) \otimes_{R} M$, it follows that $f_i \in Q_j \otimes_{R} M$.  Hence, by the choice of $Q$, we have $Q \subset Q_j$ for all $j \geq i$.

Since the $Q_j$ are decreasing and $Q \subset Q_j$ for all $j \geq i$, to show that the $Q_j$ stabilize to $Q$ it suffices to find a $j \geq i$ such that $Q_j \subset Q$.  As an element of
\[ 
\Hom_{R}(M_i,R) \otimes_{R} M = \colim_{j \in J} (\Hom_{R}(M_i,R) \otimes_{R} M_j),
\]
$f_i$ is the colimit of $f_{ij}$ for $j \geq i$, and $f_i$ also lies in the submodule
\[ 
\colim_{j \in J} (Q \otimes_{R} M_j) \subset \colim_{j \in J} (\Hom_{R}(M_i,R) \otimes_{R} M_j).
\]
It follows that for some $j \geq i$, $f_{ij}$ lies in $Q \otimes_{R} M_j$.  Since $Q_{j}$ is the smallest submodule of $\Hom_{R}(M_i,R)$ with $f_{ij} \in Q_j \otimes_{R} M_j$, we conclude $Q_j\subset Q$.
\end{proof}

\begin{lemma}
\label{lemma-power-series-ML}
Let $R$ be a Noetherian ring and $n$ a positive integer.  Then the $R$-module $M = R[[t_1, \dots, t_n]]$ is flat and Mittag-Leffler.
\end{lemma}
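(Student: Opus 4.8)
The plan is to handle flatness and the Mittag-Leffler property separately, both resting on the $R$-module identification $M = R[[t_1,\dots,t_n]] \cong R^A$, where $A$ is the set of monomials in $t_1,\dots,t_n$ (a countable set): a power series is precisely an arbitrary family of coefficients indexed by $A$.

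For flatness I would use the equational criterion (Lemma \ref{lemma-flat-eq}) together with the Noetherian hypothesis. Suppose $\sum_{i=1}^{r} a_i x_i = 0$ is a relation in $R^A$, with $a_i \in R$ and $x_i = (x_{i\alpha})_{\alpha \in A}$. For each $\alpha$ the vector $(x_{1\alpha},\dots,x_{r\alpha})$ lies in the kernel $K \subset R^r$ of the map $R^r \to R$ sending the $i$-th basis vector to $a_i$. As $R$ is Noetherian, $K$ is generated by finitely many vectors $v_j = (b_{1j},\dots,b_{rj})$, $j = 1,\dots,m$; writing $(x_{1\alpha},\dots,x_{r\alpha}) = \sum_j c_{j\alpha} v_j$ and setting $y_j = (c_{j\alpha})_{\alpha \in A} \in R^A$ gives $x_i = \sum_j b_{ij} y_j$ with $\sum_i a_i b_{ij} = 0$, so the relation is trivial. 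Hence $M$ is flat. (Equivalently, one may cite that a product of flat modules over a Noetherian ring is flat.)

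For the Mittag-Leffler property I would invoke Lemma \ref{lemma-flat-ML-criterion}: since $M$ is flat, it suffices to show that for every free finite $R$-module $F$ and every $x \in F \otimes_R M$ there is a smallest submodule $F' \subseteq F$ with $x \in F' \otimes_R M$. The key input is Proposition \ref{proposition-fp-tensor}, applied both to $F$ and to submodules of $F$ — which are finitely generated, hence finitely presented, because $R$ is Noetherian — yielding canonical isomorphisms $F \otimes_R M = F \otimes_R R^A \xrightarrow{\sim} F^A$ and $F' \otimes_R M \xrightarrow{\sim} (F')^A$, compatible with the inclusions $F' \hookrightarrow F$ and with the injection $F' \otimes_R M \hookrightarrow F \otimes_R M$ coming from flatness of $M$. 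Under these identifications $x$ becomes a family $(x_\alpha)_{\alpha \in A}$ with $x_\alpha \in F$, and for a submodule $F' \subseteq F$ one has $x \in F' \otimes_R M$ if and only if $x_\alpha \in F'$ for all $\alpha$. Therefore the submodule of $F$ generated by $\{x_\alpha : \alpha \in A\}$ is the desired smallest $F'$, and Lemma \ref{lemma-flat-ML-criterion} applies.

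I do not expect a real obstacle: the only substantive point is flatness of $R^A$ over a Noetherian ring, which the equational criterion settles at once, and the rest is the routine matching-up of the canonical maps $N \otimes_R R^A \to N^A$ as $N$ ranges over $F$ and its submodules — exactly the content of Proposition \ref{proposition-fp-tensor}.
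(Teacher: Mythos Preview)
Your argument is correct, and the Mittag-Leffler half is essentially identical to the paper's: both invoke Lemma~\ref{lemma-flat-ML-criterion}, identify $M$ with $R^A$, use Proposition~\ref{proposition-fp-tensor} on submodules $F'\subset F$ (finitely presented by Noetherianity), and take the submodule generated by the coordinates $x_\alpha$ as the smallest $F'$.

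The only difference is in the flatness step. The paper argues via completion: $R[[t_1,\dots,t_n]]$ is the $(t_1,\dots,t_n)$-adic completion of the Noetherian ring $R[t_1,\dots,t_n]$, hence flat over it, and $R[t_1,\dots,t_n]$ is flat over $R$. You instead go directly through the product description $M\cong R^A$ and the equational criterion, using Noetherianity to get finite generation of the syzygy module $K\subset R^r$. Your route is more self-contained (it avoids importing the nontrivial theorem that adic completions of Noetherian rings are flat) and in fact proves the slightly more general statement that any product $R^A$ is flat over a Noetherian $R$; the paper's route has the virtue of explaining why the ring structure on $M$ is relevant. Either way the argument goes through without obstacle.
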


\begin{proof}
Since $M$ is the completion of the Noetherian ring $R[t_1,\dots,t_n]$ with respect to the ideal $(t_1, \dots, t_n)$, $M$ is flat over $R[t_1, \dots, t_n]$.  Thus since $R[t_1, \dots, t_n]$ is flat over $R$, so is $M$.  We show that $M$ satisfies the condition of Lemma \ref{lemma-flat-ML-criterion}.  Let $F$ be a free finite $R$-module.  As an $R$-module, we make the identification $M = R^{I}$ for a (countable) set $I$.  If $F'$ is any submodule of $F$ then it is finitely presented since $R$ is Noetherian.  So by Proposition \ref{proposition-fp-tensor} we have a commutative diagram
\[
\xymatrix{
F' \otimes_{R} M \ar[r] \ar[d]^{\cong} & F \otimes_{R} M \ar[d]^{\cong} \\
(F')^I \ar[r] & F^I
}
\]
by which we can identify the map $F' \otimes_{R} M \rightarrow F \otimes_{R} M$ with $(F')^I \rightarrow F^I$.  Hence if $x \in F \otimes_{R} M$ corresponds to $(x_i) \in F^I$, then the submodule of $F'$ of $F$ generated by the $x_i$ is the smallest submodule of $F$ such that $x \in F' \otimes_{R} M$.
\end{proof}

\begin{non-examples}
\label{non-examples-ML}
\begin{enumerate}
\item By Example \ref{example-Q-not-ML} and Proposition \ref{proposition-ML-tensor}, $\mathbf{Q}$ is not a Mittag-Leffler $\mathbf{Z}$-module.

\item We prove below (Theorem \ref{theorem-projectivity-characterization}) that for a flat and countably generated module, projectivity is equivalent to being Mittag-Leffler.  Thus any flat, countably generated, non-projective module $M$ is an example of a non-Mittag-Leffler module.  For such an example, see \cite[Remark 7.65.3]{stacks-project}.
\end{enumerate}
\end{non-examples}

\section{A characterization of projective modules}
\noindent
The goal of this section is to prove that a module is projective if and only if it is flat, Mittag-Leffler, and a direct sum of countably generated modules (Theorem \ref{theorem-projectivity-characterization} below).

\begin{lemma}
\label{lemma-ML-countable-colimit}
Let $M$ be an $R$-module.  Write $M = \colim_{i \in I} M_i$ where $(M_i, f_{ij})$ is a directed system of finitely presented $R$-modules.  If $M$ is Mittag-Leffler and countably generated, then there is a directed countable subset $I' \subset I$ such that $M \cong \colim_{i \in I'} M_i$.
\end{lemma}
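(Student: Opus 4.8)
The plan is to build the countable cofinal subset $I'$ by an inductive ``zig-zag'' argument that alternates between two requirements: (a) the submodules $M_i$ with $i \in I'$ must eventually surject onto all of $M$ (so that $\colim_{i \in I'} M_i \to M$ is surjective), and (b) for each $i \in I'$ the transition map $f_{ij}$ to some later $j \in I'$ must factor through $f_{ik}$ for every $k \geq i$ (so that we can use the Mittag-Leffler condition in the form of Proposition \ref{proposition-ML-characterization}(3) to control the kernel). Fix a countable generating set $x_1, x_2, \ldots$ of $M$; since each $x_n$ is the image of some element of some $M_{i}$, there is a countable subset $I_0 \subset I$ such that the images of the $M_i$, $i \in I_0$, together contain all the $x_n$, hence generate $M$.

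First I would enlarge $I_0$ to a countable directed subset: given any countable subset of $I$, each pair of elements has an upper bound in $I$, so adding one upper bound per pair and iterating countably many times produces a countable directed set containing it. Call the result $I_1$. Next, to arrange property (b), I would use condition (3) of Proposition \ref{proposition-ML-characterization}: for each $i \in I$ there is $j(i) \geq i$ such that $f_{i,j(i)}$ factors through $f_{ik}$ for all $k \geq i$. Starting from $I_1$, repeatedly adjoin $j(i)$ for each $i$ currently in the set, then re-close under the directedness operation of the previous step, and iterate $\omega$ times. The union $I'$ of this increasing chain of countable sets is countable, directed, contains $I_0$ (so the $M_i$, $i \in I'$, still generate $M$), and has the property that for every $i \in I'$ there exists $j \in I'$ with $j \geq i$ such that $f_{ij}$ factors through $f_{ik}$ for all $k \geq i$ --- in particular this holds using only indices $k \in I'$.

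It then remains to check that the canonical map $\varphi: \colim_{i \in I'} M_i \to M$ is an isomorphism. Surjectivity is immediate since the $M_i$ for $i \in I'$ generate $M$ and each lies in the image of $\varphi$. For injectivity, suppose $\xi \in \colim_{i \in I'} M_i$ maps to $0$; represent $\xi$ by some $y \in M_i$, $i \in I'$, so that $f_i(y) = 0$ in $M$, where $f_i: M_i \to M$ is the canonical map. Because $M = \colim_{k \in I} M_k$ (over the full index set), there is some $k \geq i$ in $I$ with $f_{ik}(y) = 0$; but I need to push this vanishing into $M_j$ for some $j \in I'$. Here is where property (b) for $I'$ enters: choose $j \in I'$ with $j \geq i$ such that $f_{ij} = h \circ f_{ik'}$ for all $k' \geq i$; taking $k'$ to be an element of $I$ dominating both $j$ and $k$, we get $f_{ij}(y) = h(f_{ik'}(y))$, and since $f_{ik}(y) = 0$ with $k' \geq k$ we have $f_{ik'}(y) = 0$, hence $f_{ij}(y) = 0$. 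Thus $\xi = 0$ already in $\colim_{i \in I'} M_i$, so $\varphi$ is injective.

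The main obstacle is the last step: the naive attempt to prove injectivity fails because the index $k$ witnessing $f_{ik}(y) = 0$ need not lie in $I'$, and $I'$ need not be cofinal in $I$. The Mittag-Leffler hypothesis is exactly what repairs this --- property (b), built into $I'$ during the construction, lets us transfer the vanishing from an arbitrary $k \in I$ back to an index $j \in I'$, using that $f_{ij}$ factors through $f_{ik'}$ for the specific large $k'$ we pick. So the care needed is in the bookkeeping of the $\omega$-step closure (ensuring $I'$ is simultaneously directed and closed under $i \mapsto j(i)$) and in correctly invoking Proposition \ref{proposition-ML-characterization}(3) with the factoring map $h$ at the end.
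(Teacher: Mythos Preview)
Your proposal is correct and follows essentially the same approach as the paper: build $I'$ by alternately closing a countable seed set under upper bounds (to get directedness) and under the operation $i \mapsto j(i)$ provided by Proposition~\ref{proposition-ML-characterization}(3), then check surjectivity from the generators and injectivity via the factoring property. One small notational point: in your injectivity argument the map $h$ in ``$f_{ij} = h \circ f_{ik'}$ for all $k' \geq i$'' depends on $k'$, so you should write $h_{k'}$ (or, as the paper does, simply observe that the factoring gives $\ker(f_{ik'}) \subset \ker(f_{ij})$ for all $k' \geq i$); with this fix the argument goes through exactly as you wrote it.
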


\begin{proof}
Let $x_1,x_2, \dots$ be a countable set of generators for $M$.  For each $x_n$ choose $i \in I$ such that $x_n$ is in the image of the canonical map $f_{i}: M_{i} \rightarrow M$; let $I'_{0} \subset I$ be the set of all these $i$.  Now since $M$ is Mittag-Leffler, for each $i \in I'_{0}$ we can choose $j \in I$ such that $j \geq i$ and $f_{ij}: M_{i} \rightarrow M_{j}$ factors through $f_{ik}: M_i \rightarrow M_k$ for all $k \geq i$  (condition (3) of Proposition \ref{proposition-ML-characterization}); let $I'_1$ be the union of $I'_0$ with all of these $j$.  Since $I'_1$ is a countable, we can enlarge it to a countable directed set $I'_{2} \subset I$.  Now we can apply the same procedure to $I'_{2}$ as we did to $I'_{0}$ to get a new countable set $I'_{3} \subset I$.  Then we enlarge $I'_{3}$ to a countable directed set $I'_{4}$.  Continuing in this way---adding in a $j$ as in Proposition \ref{proposition-ML-characterization} (3) for each $ i \in I'_{\ell}$ if $\ell$ is odd and enlarging $I'_{\ell}$ to a directed set if $\ell$ is even---we get a sequence of subsets $I'_{\ell} \subset I$ for $\ell \geq 0$.  The union $I' = \bigcup I'_{\ell}$ satisfies: 
\begin{enumerate}
\item $I'$ is countable and directed; 
\item each $x_{n}$ is in the image of $f_{i}: M_i \rightarrow M$ for some $i \in I'$;
\item if $i \in I'$, then there is $j \in I'$ such that $j \geq i$ and $f_{ij}: M_{i} \rightarrow M_{j}$ factors through $f_{ik}: M_i \rightarrow M_k$ for all $k \in I$ with $k \geq i$.  In particular $\ker(f_{ik}) \subset \ker(f_{ij})$ for $k \geq i$.
\end{enumerate}
We claim that the canonical map $\colim_{i \in I'} M_i \rightarrow \colim_{i \in I} M_i = M$ is an isomorphism.  By (2) it is surjective.  For injectivity, suppose $x \in \colim_{i \in I'} M_i$ maps to $0$ in $\colim_{i \in I} M_i$.  Representing $x$ by an element $\tilde{x} \in M_i$ for some $i \in I'$, this means that $f_{ik}(\tilde{x}) = 0$ for some $k \in I, k \geq i$.  But then by (3) there is $j \in I', j \geq i,$ such that $f_{ij}(\tilde{x}) = 0$.  Hence $x = 0$ in $\colim_{i \in I'} M_i$.
\end{proof}

\begin{lemma}
\label{lemma-countgen-projective}
Let $M$ be an $R$-module.  If $M$ is flat, Mittag-Leffler, and countably generated, then $M$ is projective.
\end{lemma}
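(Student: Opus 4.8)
The plan is to reduce $M$ to a countable telescope of free finite modules and then convert the Mittag-Leffler hypothesis into the required lifting property via Lemma~\ref{lemma-ML-exact-sequence}. First, since $M$ is flat, Lazard's theorem (Theorem~\ref{theorem-lazard}) lets me write $M = \colim_{i \in I} M_i$ for a directed system $(M_i, f_{ij})$ of free finite $R$-modules. Since $M$ is moreover Mittag-Leffler and countably generated, Lemma~\ref{lemma-ML-countable-colimit} lets me replace $I$ by a directed countable subset, so I may assume $I$ is countable; the $M_i$ are still free and finite and $M = \colim_i M_i$.

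To show $M$ is projective I will split a surjection onto $M$. Choose a free $R$-module $G$ and a surjection $\pi \colon G \to M$, and put $K = \ker(\pi)$. Each $M_i$ is free, hence projective, so applying $\Hom_R(M_i,-)$ to $0 \to K \to G \to M \to 0$ gives a short exact sequence $0 \to \Hom_R(M_i,K) \to \Hom_R(M_i,G) \to \Hom_R(M_i,M) \to 0$, and these assemble into a short exact sequence of directed inverse systems indexed by the countable set $I$. Because $M$ is Mittag-Leffler, condition~(4) of Proposition~\ref{proposition-ML-characterization}, applied to the presentation $M = \colim_i M_i$ and to the module $K$, says the inverse system $(\Hom_R(M_i,K))_{i \in I}$ is Mittag-Leffler. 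Now Lemma~\ref{lemma-ML-exact-sequence} applies --- this is exactly where countability of $I$ is used --- and yields exactness of $0 \to \lim_i \Hom_R(M_i,K) \to \lim_i \Hom_R(M_i,G) \to \lim_i \Hom_R(M_i,M) \to 0$. Since $M = \colim_i M_i$ we have $\lim_i \Hom_R(M_i,-) = \Hom_R(M,-)$ naturally, so surjectivity on the right means $\pi_* \colon \Hom_R(M,G) \to \Hom_R(M,M)$ is surjective. Lifting $\mathrm{id}_M$ through $\pi_*$ produces a section $s \colon M \to G$ of $\pi$, exhibiting $M$ as a direct summand of the free module $G$; hence $M$ is projective.

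In this proof the real content is borrowed from Lemma~\ref{lemma-ML-countable-colimit} and Lemma~\ref{lemma-ML-exact-sequence}, so the only point that needs care is the bookkeeping around changing the index set: after replacing $I$ by the countable directed subset furnished by Lemma~\ref{lemma-ML-countable-colimit}, I must be sure that $(M_i)_{i \in I}$ is still a legitimate presentation of $M$ by finitely presented modules, so that Proposition~\ref{proposition-ML-characterization}(4) can be invoked for it. This is fine because being Mittag-Leffler is characterized by condition~(1) of that proposition, which refers only to $M$ and not to any chosen presentation, so it is inherited by (and transfers to) every presentation of $M$ by finitely presented modules. Everything else is formal: $\Hom$ turns the colimit $M = \colim_i M_i$ into a limit, the freeness of the $M_i$ keeps the relevant $\Hom$-sequences short exact, and Lemma~\ref{lemma-ML-exact-sequence} does the work of promoting the Mittag-Leffler hypothesis into the surjectivity we need --- no step requires a lengthy computation.
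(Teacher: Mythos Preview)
Your proof is correct and follows essentially the same approach as the paper's: both reduce via Lazard and Lemma~\ref{lemma-ML-countable-colimit} to a countable directed system of finite free modules, apply $\Hom_R(M_i,-)$ to a short exact sequence, and invoke Lemma~\ref{lemma-ML-exact-sequence} using the Mittag-Leffler hypothesis. The only cosmetic difference is that the paper verifies exactness of $\Hom_R(M,-)$ on an arbitrary short exact sequence, whereas you apply the argument to a single free presentation $0 \to K \to G \to M \to 0$ and lift $\mathrm{id}_M$; these are equivalent ways of certifying projectivity and the underlying mechanism is identical.
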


\begin{proof}
By Lazard's theorem (Theorem \ref{theorem-lazard}), we can write $M = \colim_{i \in I} M_i$ for a directed system of finite free $R$-modules $(M_i, f_{ij})$ indexed by a set $I$.  By Lemma \ref{lemma-ML-countable-colimit}, we may assume $I$ is countable.  Now let
\[ 0 \rightarrow N_1 \rightarrow N_2 \rightarrow N_3 \rightarrow 0 \]
be an exact sequence of $R$-modules.  We must show that applying $\Hom_R(M,-)$ preserves exactness.  Since $M_i$ is finite free,
\[ 0 \rightarrow \Hom_R(M_i,N_1) \rightarrow \Hom_R(M_i,N_2) \rightarrow \Hom_R(M_i,N_3) \rightarrow 0 \]
is exact for each $i$.  Since $M$ is Mittag-Leffler, $(\Hom_R(M_i, N_{1}))$ is a Mittag-Leffler inverse system.  So by Lemma \ref{lemma-ML-exact-sequence}, 
\[ 0 \rightarrow \lim_{i \in I} \Hom_R(M_i,N_1) \rightarrow \lim_{i \in I} \Hom_R(M_i,N_2) \rightarrow \lim_{i \in I} \Hom_R(M_i,N_3) \rightarrow 0 \]
is exact.  But for any $R$-module $N$ there is a functorial isomorphism $\Hom_R(M,N) \cong \lim_{i \in I} \Hom_R(M_i,N)$, so
\[ 0 \rightarrow \Hom_R(M,N_1) \rightarrow \Hom_R(M,N_2) \rightarrow \Hom_R(M,N_3) \rightarrow 0 \]
is exact.
\end{proof}

\begin{remark}
Lemma \ref{lemma-countgen-projective} does not hold without the countable generation assumption.  For example, the $\mathbf Z$-module $M = \mathbf{Z}[[x]]$ is flat and Mittag-Leffler but not projective.  It is Mittag-Leffler by Lemma \ref{lemma-power-series-ML}.  Subgroups of free abelian groups are free, hence a projective $\mathbf Z$-module is in fact free and so are its submodules.  Thus to show $M$ is not projective it suffices to produce a non-free submodule.  Fix a prime $p$ and consider the submodule $N$ consisting of power series $f(x) = \sum a_i x^i$ such that for every integer $m \geq 1$, $p^m$ divides $a_i$ for all but finitely many $i$.  Then $\sum a_i p^i x^i$ is in $N$ for all $a_i \in \mathbf{Z}$, so $N$ is uncountable.  Thus if $N$ were free it would have uncountable rank and the dimension of $N/pN$ over $\mathbf{Z}/p$ would be uncountable.  This is not true as the elements $x^i \in N/pN$ for $i \geq 0$ span $N/pN$.
\end{remark}

\begin{theorem}
\label{theorem-projectivity-characterization}
Let $M$ be an $R$-module.  Then $M$ is projective if and only it satisfies:
\begin{enumerate}
\item $M$ is flat,
\item $M$ is Mittag-Leffler, 
\item $M$ is a direct sum of countably generated $R$-modules.
\end{enumerate}
\end{theorem}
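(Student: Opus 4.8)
The plan is to prove the two implications separately; essentially all the substantive work has been done in the preceding sections, so the argument is a matter of assembling the right references. For the forward direction, suppose $M$ is projective. Then $M$ is flat since projective modules are flat, $M$ is Mittag-Leffler by Examples \ref{examples-ML} (3), and $M$ is a direct sum of countably generated modules by Theorem \ref{theorem-projective-direct-sum}. This establishes (1)--(3).

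For the converse, suppose $M$ satisfies (1)--(3) and write $M = \bigoplus_{i \in I} M_i$ with each $M_i$ countably generated. The strategy is to show that each summand $M_i$ is itself flat, Mittag-Leffler, and countably generated, so that Lemma \ref{lemma-countgen-projective} applies and yields that each $M_i$ is projective; then one concludes by the standard fact that a direct sum of projective modules is projective (if $M_i \oplus N_i \cong F_i$ with $F_i$ free, then $M \cong \bigoplus_i M_i$ is a direct summand of the free module $\bigoplus_i F_i$). Each $M_i$ is countably generated by hypothesis. Each $M_i$ is Mittag-Leffler by the ``only if'' direction of Lemma \ref{lemma-direct-sum-ML}. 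Finally each $M_i$ is flat: the inclusion $M_i \hookrightarrow M$ is split, hence universally injective by Examples \ref{examples-universally-exact} (1), so Lemma \ref{lemma-ui-flat-domain} applied with $M_2 = M$ (which is flat) gives that $M_i$ is flat.

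The point that makes this short argument go through is simply that all three hypotheses descend to arbitrary direct summands: flatness via Lemma \ref{lemma-ui-flat-domain}, the Mittag-Leffler property via Lemma \ref{lemma-direct-sum-ML}, and condition (3) trivially. So there is no genuine obstacle remaining at this stage — the real content was already absorbed into Lemma \ref{lemma-countgen-projective} (together with its inputs, the Mittag-Leffler exactness Lemma \ref{lemma-ML-exact-sequence} and the countable reduction of Lemma \ref{lemma-ML-countable-colimit}), and into the d\'evissage Theorem \ref{theorem-projective-direct-sum}. The only thing one must be slightly careful about is to invoke the correct descent lemma for each of the two nontrivial properties rather than conflating them.
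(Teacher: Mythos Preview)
Your proof is correct and follows essentially the same approach as the paper: both directions assemble the preceding results in the same way, reducing the converse to Lemma \ref{lemma-countgen-projective} by noting that flatness and the Mittag-Leffler property pass to direct summands. The only difference is cosmetic---you cite specific lemmas (Lemma \ref{lemma-ui-flat-domain}, Lemma \ref{lemma-direct-sum-ML}) where the paper simply asserts that these properties pass to direct summands.
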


\begin{proof}
First suppose $M$ is projective.  Then $M$ is a direct summand of a free module, so $M$ is flat and Mittag-Leffler since these properties pass to direct summands. By Kaplansky's theorem (Theorem \ref{theorem-projective-direct-sum}), $M$ satisfies (3).

Conversely, suppose $M$ satisfies (1)-(3).  Since being flat and Mittag-Leffler passes to direct summands, $M$ is a direct sum of flat, Mittag-Leffler, countably generated $R$-modules.  Thus by the previous lemma $M$ is a direct sum of projective modules.  Hence $M$ is projective.
\end{proof}

\begin{lemma}
\label{lemma-ML-ui-descent}
Let $f: M \rightarrow N$ be universally injective map of $R$-modules.  Suppose $M$ is a direct sum of countably generated $R$-modules, and suppose $N$ is flat and Mittag-Leffler.  Then $M$ is projective. 
\end{lemma}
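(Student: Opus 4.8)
The plan is to verify that $M$ satisfies the three conditions of Theorem \ref{theorem-projectivity-characterization}, namely that $M$ is flat, Mittag-Leffler, and a direct sum of countably generated $R$-modules, and then to conclude that $M$ is projective. The third condition is given to us by hypothesis, so the work is entirely in checking flatness and the Mittag-Leffler property, both of which should follow immediately from the earlier results about universally injective maps.

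For flatness, I would simply invoke Lemma \ref{lemma-ui-flat-domain}: since $f \colon M \to N$ is universally injective and $N$ is flat, $M$ is flat. For the Mittag-Leffler property, the key observation is that a universally injective map $f \colon M \to N$ is in particular injective (tensor with $Q = R$), so we obtain an exact sequence
\[ 0 \to M \xrightarrow{f} N \to \coker(f) \to 0, \]
and this sequence is universally exact precisely because $f$ is universally injective. Since $N$ is Mittag-Leffler by hypothesis, Lemma \ref{lemma-pure-submodule-ML} (1) gives that $M$ is Mittag-Leffler. Together with the hypothesis that $M$ is a direct sum of countably generated modules, Theorem \ref{theorem-projectivity-characterization} then yields that $M$ is projective.

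I do not expect a genuine obstacle here: all the substantive content has been established in the preceding sections, and the argument is a matter of assembling Lemma \ref{lemma-ui-flat-domain}, Lemma \ref{lemma-pure-submodule-ML}, and Theorem \ref{theorem-projectivity-characterization}. The only point worth spelling out is that universal injectivity of $f$ is exactly the hypothesis needed to make the sequence $0 \to M \to N \to \coker(f) \to 0$ universally exact, so that the ``pure submodule'' form of Lemma \ref{lemma-pure-submodule-ML} applies to the submodule $M \subset N$.
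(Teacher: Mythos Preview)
Your proposal is correct and matches the paper's proof exactly: it invokes Lemma \ref{lemma-ui-flat-domain} for flatness, Lemma \ref{lemma-pure-submodule-ML} (1) for the Mittag-Leffler property via the universally exact sequence $0 \to M \to N \to \coker(f) \to 0$, and then concludes by Theorem \ref{theorem-projectivity-characterization}. The only addition you make is the explicit remark that universal injectivity of $f$ is precisely what makes the sequence universally exact, which the paper leaves implicit.
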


\begin{proof}
By Lemmas \ref{lemma-ui-flat-domain} and \ref{lemma-pure-submodule-ML}, $M$ is flat and Mittag-Leffler, so the conclusion follows from Theorem \ref{theorem-projectivity-characterization}.
\end{proof}

\begin{lemma}
Let $R$ be a Noetherian ring and let $M$ be a $R$-module.  Suppose $M$ is a direct sum of countably generated $R$-modules, and suppose there is a universally injective map $M \rightarrow R[[t_1, \dots, t_n]]$ for some $n$.  Then $M$ is projective.  
\end{lemma}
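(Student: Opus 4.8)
The plan is to observe that this statement is an immediate combination of two results already established. First I would invoke Lemma \ref{lemma-power-series-ML}, which asserts that for a Noetherian ring $R$ and any positive integer $n$, the $R$-module $N = R[[t_1, \dots, t_n]]$ is flat and Mittag-Leffler. So the hypothesis gives us a universally injective map $M \to N$ with $N$ flat and Mittag-Leffler, and with $M$ a direct sum of countably generated $R$-modules.

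Next I would apply Lemma \ref{lemma-ML-ui-descent} verbatim: a universally injective map into a flat Mittag-Leffler module, with source a direct sum of countably generated modules, forces the source to be projective. (Recall that internally Lemma \ref{lemma-ML-ui-descent} runs through Lemma \ref{lemma-ui-flat-domain} to deduce $M$ flat, Lemma \ref{lemma-pure-submodule-ML}(1) to deduce $M$ Mittag-Leffler—using that a universally injective map sits in a universally exact sequence—and then Theorem \ref{theorem-projectivity-characterization} to conclude projectivity; but we need not reprove any of this.) This yields that $M$ is projective, completing the argument.

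There is essentially no obstacle here: the content has all been done. The only thing to double-check is purely bookkeeping—that the universal injectivity hypothesis is exactly the input Lemma \ref{lemma-ML-ui-descent} wants, and that the Noetherian hypothesis is consumed precisely (and only) in order to apply Lemma \ref{lemma-power-series-ML}. So the write-up should be a one- or two-sentence proof citing Lemmas \ref{lemma-power-series-ML} and \ref{lemma-ML-ui-descent}, perhaps with a parenthetical remark that this is the concrete special case of the descent machinery that will feed into the faithfully flat descent statement (one can base change along $R \to S$ and use that $R \to S$ faithfully flat makes $M \to M \otimes_R S$ universally injective, and embed suitable completions), though that broader discussion belongs to the next section rather than to the proof itself.
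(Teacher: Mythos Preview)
Your proposal is correct and matches the paper's proof exactly: the paper simply writes ``Follows from the previous lemma and Lemma~\ref{lemma-power-series-ML},'' where the previous lemma is Lemma~\ref{lemma-ML-ui-descent}. Your parenthetical unpacking of Lemma~\ref{lemma-ML-ui-descent} is accurate but unnecessary, and the closing remarks about faithfully flat descent are speculative context that does not belong in the proof (and does not quite match how the paper actually uses this material), so the final write-up should indeed be the one-line citation you suggest.
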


\begin{proof}
Follows from the previous lemma and Lemma \ref{lemma-power-series-ML}.
\end{proof}

\section{Ascending properties}
\noindent
All of the properties of a module in Theorem \ref{theorem-projectivity-characterization} ascend along arbitrary ring maps:
\begin{lemma}
\label{lemma-ascend-properties-modules}
Let $R \rightarrow S$ be a ring map.  Let $M$ be an $R$-module.  Then:
\begin{enumerate}
\item If $M$ is flat, then the $S$-module $M \otimes_{R} S$ is flat.
\item If $M$ is Mittag-Leffler, then the $S$-module $M \otimes_{R} S$ is Mittag-Leffler.
\item If $M$ is a direct sum of countably generated $R$-modules, then the $S$-module $M \otimes_{R} S$ is a direct sum of countably generated $S$-modules.
\item If $M$ is projective, then the $S$-module $M \otimes_{R} S$ is projective.
\end{enumerate}
\end{lemma}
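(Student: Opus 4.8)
The plan is to treat the four statements separately; (1), (3), and (4) are formal, and (2) is the only one needing a short argument.

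For (1), if $N' \to N$ is an injection of $S$-modules, then under the canonical identification $(M \otimes_R S) \otimes_S N \cong M \otimes_R N$ the induced map $(M \otimes_R S) \otimes_S N' \to (M \otimes_R S) \otimes_S N$ becomes the map $M \otimes_R N' \to M \otimes_R N$, which is injective since $M$ is flat over $R$; hence $M \otimes_R S$ is flat over $S$. For (3), tensor product commutes with arbitrary direct sums, so a decomposition $M = \bigoplus_{i \in I} N_i$ with each $N_i$ countably generated over $R$ yields $M \otimes_R S = \bigoplus_{i \in I}(N_i \otimes_R S)$, and if $x_{i,1}, x_{i,2}, \dots$ generate $N_i$ over $R$ then $x_{i,1} \otimes 1, x_{i,2} \otimes 1, \dots$ generate $N_i \otimes_R S$ over $S$; thus $M \otimes_R S$ is a direct sum of countably generated $S$-modules. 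For (4), a projective $R$-module $M$ is a direct summand of a free $R$-module $F$, so $M \otimes_R S$ is a direct summand of the free $S$-module $F \otimes_R S$ and hence projective. (Alternatively, (4) follows from (1)--(3) together with Theorem \ref{theorem-projectivity-characterization}.)

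For (2), I would use the characterization of Mittag-Leffler modules in terms of products from Proposition \ref{proposition-ML-tensor}. Assume $M$ is Mittag-Leffler over $R$ and let $(P_\alpha)_{\alpha \in A}$ be a family of $S$-modules. Restriction of scalars preserves products, so the underlying $R$-module of $\prod_\alpha P_\alpha$ is the product of the restrictions; combined with the associativity isomorphisms $(M \otimes_R S) \otimes_S P_\alpha \cong M \otimes_R P_\alpha$ and $(M \otimes_R S) \otimes_S \left(\prod_\alpha P_\alpha\right) \cong M \otimes_R \left(\prod_\alpha P_\alpha\right)$, the canonical map
\[ (M \otimes_R S) \otimes_S \left(\prod_\alpha P_\alpha\right) \longrightarrow \prod_\alpha \left((M \otimes_R S) \otimes_S P_\alpha\right) \]
is identified with the canonical map $M \otimes_R \left(\prod_\alpha P_\alpha\right) \to \prod_\alpha (M \otimes_R P_\alpha)$ attached to the family $(P_\alpha)$ viewed as $R$-modules. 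The latter is injective because $M$ is Mittag-Leffler over $R$, so the former is injective as well, and Proposition \ref{proposition-ML-tensor} gives that $M \otimes_R S$ is Mittag-Leffler over $S$.

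No part presents a genuine obstacle; the only point requiring attention is the bookkeeping in (2)---checking that the canonical maps correspond under the tensor associativity isomorphisms and that the product of the $P_\alpha$ carries the expected $R$-module structure. One could instead prove (2) via criterion (1) of Proposition \ref{proposition-ML-characterization}, factoring a given map $P \to M \otimes_R S$ from a finitely presented $S$-module $P$ through a finitely presented module, but the product criterion is cleaner.
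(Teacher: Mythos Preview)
Your proof is correct. For parts (1), (3), and (4) you and the paper agree that they are formal. For part (2) you take a different but equally valid route: you invoke the product criterion of Proposition~\ref{proposition-ML-tensor}, whereas the paper uses criterion~(3) of Proposition~\ref{proposition-ML-characterization}. The paper's argument is simply: write $M = \colim_i M_i$ with each $M_i$ finitely presented; since tensoring commutes with colimits, $M \otimes_R S = \colim_i (M_i \otimes_R S)$ is a directed colimit of finitely presented $S$-modules, and if $f_{ij}$ factors through every $f_{ik}$ for $k \geq i$, then $f_{ij} \otimes_R \mathrm{id}_S$ factors through every $f_{ik} \otimes_R \mathrm{id}_S$. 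Your approach avoids choosing a directed system altogether and instead reduces to the $R$-module statement via the associativity isomorphism $(M \otimes_R S) \otimes_S N \cong M \otimes_R N$; the paper's approach avoids that bookkeeping but requires carrying along the colimit presentation. Both are essentially one-line arguments once the right criterion is in hand, and your aside about criterion~(1) being workable but less clean is accurate---the paper's choice of criterion~(3) is the slicker version of that alternative.
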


\begin{proof}
All are obvious except (2).  For this, use formulation (3) of being Mittag-Leffler from Proposition \ref{proposition-ML-characterization} and the fact that tensoring commutes with taking colimits.
\end{proof}

\section{Descending properties}
\noindent
We address the faithfully flat descent of the properties from Theorem \ref{theorem-projectivity-characterization} that characterize projectivity.

\begin{lemma}
\label{lemma-ffdescent-flatness}
Let $R \rightarrow S$ be a faithfully flat ring map.  Let $M$ be an $R$-module.  If the $S$-module $M \otimes_{R} S$ is flat, then $M$ is flat. 
\end{lemma}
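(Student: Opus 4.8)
The plan is to deduce flatness of $M$ from the equational criterion for flatness (Lemma \ref{lemma-flat-eq}): every relation $\sum_i a_i x_i = 0$ in $M$ should be trivial. The idea is that such a relation, after base change, becomes a relation in $M \otimes_R S$, which is trivial there by hypothesis, and then faithful flatness should let me descend the witnessing data (the $b_{ij}$ and the $y_j$) back to $R$ and $M$.

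First I would fix a relation $\sum_{i=1}^n a_i x_i = 0$ in $M$ with $a_i \in R$, $x_i \in M$. The cleanest route is to package the relation as a map: consider the $R$-linear map $\varphi \colon R^n \to M$ sending the $i$-th basis vector to $x_i$, so that the relation says the vector $a = (a_1, \dots, a_n) \in R^n$ lies in $\ker(\varphi)$. By Lemma \ref{lemma-flat-factors-free}, flatness of $M$ is equivalent to: for every such $\varphi$ and every $a \in \ker(\varphi)$, there exist $h \colon R^n \to R^m$ and $g \colon R^m \to M$ with $\varphi = g \circ h$ and $a \in \ker(h)$. So it suffices to produce such $h$ and $g$ for our given $\varphi$ and $a$.

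Next I would base change to $S$: the map $\varphi \otimes_R \mathrm{id}_S \colon S^n \to M \otimes_R S$ has $a \otimes 1$ in its kernel (the image of $a$ under $R^n \to S^n$), and since $M \otimes_R S$ is flat, Lemma \ref{lemma-flat-factors-free} gives $\bar h \colon S^n \to S^m$ and $\bar g \colon S^m \to M \otimes_R S$ with $(\varphi \otimes \mathrm{id}_S) = \bar g \circ \bar h$ and $a \otimes 1 \in \ker(\bar h)$. The task is now to descend this factorization. Here I expect the cleanest formalism: the functor $- \otimes_R S$ together with faithful flatness detects and reflects exactness, and one can descend maps by descent theory along $R \to S$ using the cocycle condition over $S \otimes_R S$. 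Concretely: a map $\bar h \colon S^n \to S^m$ need not descend, but $\ker(\bar h)$, being a finitely generated (indeed the hypothesis gives us more) submodule, and the relevant factorization data, can be chosen compatibly. Alternatively — and this is likely what the paper does — one reduces to the equational criterion directly: triviality of the relation in $M \otimes_R S$ gives $\bar y_j \in M \otimes_R S$ and $\bar b_{ij} \in S$ with $x_i \otimes 1 = \sum_j \bar b_{ij} \bar y_j$ and $\sum_i a_i \bar b_{ij} = 0$; one then argues that the existence of such a solution is an $R$-linear condition (the solvability of a system of linear equations over $M$, equivalently the nonemptiness of a certain affine subset / the exactness of a certain complex of $R$-modules), and faithful flatness of $S$ over $R$ means solvability over $S$ implies solvability over $R$.

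**The main obstacle** will be setting up the last descent step rigorously without circularity, since the statement "a system of $R$-linear equations with values in $M$ solvable over $S$ is solvable over $R$" is itself essentially a flatness/faithful-flatness fact about $S$ and $M$, and one must be careful that it does not secretly require $M$ to be flat. The resolution is that it only requires $S$ to be faithfully flat over $R$: the set of solutions is the kernel (or a coset) of an explicit map between finite direct sums of copies of $M$ and of $R$, and $- \otimes_R S$ commutes with such kernels and with cokernels while reflecting whether they vanish; phrasing everything via Lemma \ref{lemma-descend-properties-modules}-style reflection of exactness along $R \to S$, or by a direct chase with the cokernel of the defining map, closes the gap. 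I would organize the write-up around Lemma \ref{lemma-flat-factors-fp}: it suffices to show every map $P \to M$ from a finitely presented $P$ factors through a finite free module, this factorization exists after $\otimes_R S$, and faithfully flat descent for the (finitely presented!) cokernel data lets one descend the factorization to $R$.
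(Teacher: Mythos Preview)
The paper does not actually prove this lemma: its proof is a one-line citation to the Stacks project. The standard argument there is direct and avoids the equational criterion entirely: for an injection $N' \hookrightarrow N$ of $R$-modules, set $K = \ker(M \otimes_R N' \to M \otimes_R N)$; since $S$ is flat, $K \otimes_R S$ is the kernel of $(M \otimes_R S) \otimes_S (N' \otimes_R S) \to (M \otimes_R S) \otimes_S (N \otimes_R S)$, which vanishes because $N' \otimes_R S \to N \otimes_R S$ is injective and $M \otimes_R S$ is $S$-flat; faithful flatness then gives $K = 0$.

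Your approach via the equational criterion has a real gap at exactly the point you flag. The triviality of a relation $\sum_i a_i x_i = 0$ asks for $b_{ij} \in R$ and $y_j \in M$ with $x_i = \sum_j b_{ij} y_j$ and $\sum_i a_i b_{ij} = 0$. The second family of equations is linear in the $b_{ij}$, but the first is \emph{bilinear} in the unknowns $(b_{ij}, y_j)$, so the solution set is not the kernel or a coset of an $R$-linear map between direct sums of copies of $R$ and $M$. Hence ``faithful flatness reflects exactness'' does not apply, and having $\bar b_{ij} \in S$, $\bar y_j \in M \otimes_R S$ solving the system over $S$ does not formally produce a solution over $R$. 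The same issue recurs in your final reformulation via Lemma~\ref{lemma-flat-factors-fp}: a factorization $P \to F \to M$ through an unspecified finite free $F$ is again not the solution of a linear system (the constraint $\sum_j B_{ij} z_j = f(e_i)$ couples the matrix $B$ and the images $z_j \in M$ bilinearly), so ``descent for the cokernel data'' is not a well-posed step. Finally, invoking Lemma~\ref{lemma-descend-properties-modules} here is circular, since part (2) of that lemma is precisely the statement you are trying to prove. The clean fix is to abandon the equational criterion and argue directly with kernels as above.
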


\begin{proof}
This is \cite[Lemma 7.28.6]{stacks-project}.
\end{proof}

\noindent
In the presence of flatness, the property of being a Mittag-Leffler module descends:
\begin{lemma}
\label{lemma-ffdescent-flat-ML}
Let $R \rightarrow S$ be a faithfully flat ring map.  Let $M$ be an $R$-module.  If the $S$-module $M \otimes_{R} S$ is flat and Mittag-Leffler, then $M$ is flat and Mittag-Leffler. 
\end{lemma}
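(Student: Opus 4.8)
The plan is to reduce the Mittag--Leffler assertion, via Lazard's theorem and Remark \ref{remark-flat-ML}, to a statement about stabilization of images in an inverse system of finite free modules, and then to descend that stabilization along $R \to S$ using the elementary observation that an inclusion $V' \subseteq V$ of submodules of a module which becomes an equality after $- \otimes_R S$ was already an equality. To begin: $M$ is flat by Lemma \ref{lemma-ffdescent-flatness}, so only the Mittag--Leffler property remains. By Lazard's theorem (Theorem \ref{theorem-lazard}) write $M = \colim_{i \in I} F_i$ for a directed system $(F_i, f_{ij})$ of finite free $R$-modules, and for $i \leq j$ let $V_{ij} \subseteq \Hom_R(F_i, R)$ denote the image of $\Hom_R(f_{ij}, R) \colon \Hom_R(F_j, R) \to \Hom_R(F_i, R)$. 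Since $f_{ij'} = f_{jj'} \circ f_{ij}$, the family $(V_{ij})_{j \geq i}$ is decreasing in $j$, and by Remark \ref{remark-flat-ML} it suffices to prove that this family stabilizes for each fixed $i$, i.e.\ that the dual inverse system $(\Hom_R(F_i,R), \Hom_R(f_{ij},R))$ is Mittag--Leffler.

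Now base change. We have $M \otimes_R S = \colim_{i \in I}(F_i \otimes_R S)$, a colimit of a directed system of finite free (hence finitely presented) $S$-modules. Since $M \otimes_R S$ is Mittag--Leffler over $S$, criterion (4) of Proposition \ref{proposition-ML-characterization} applied over $S$ with $N = S$ shows that $(\Hom_S(F_i \otimes_R S, S), \Hom_S(f_{ij} \otimes_R S, S))$ is a Mittag--Leffler inverse system of $S$-modules. Using the natural isomorphism $\Hom_R(F_i, R) \otimes_R S \cong \Hom_S(F_i \otimes_R S, S)$ for finite free $F_i$ (which is functorial, hence compatible with the transition maps), and using flatness of $S$ over $R$ to identify the image of $\Hom_R(f_{ij}, R) \otimes_R S$ with the submodule $V_{ij} \otimes_R S$ of $\Hom_R(F_i, R) \otimes_R S$, this says exactly: for each $i$ the decreasing family $(V_{ij} \otimes_R S)_{j \geq i}$ stabilizes.

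Finally, descend. Fix $i$ and choose $j_0 \geq i$ with $V_{ij} \otimes_R S = V_{ij_0} \otimes_R S$ inside $\Hom_R(F_i, R) \otimes_R S$ for all $j \geq j_0$. For such $j$ we have $V_{ij} \subseteq V_{ij_0}$; tensoring the short exact sequence $0 \to V_{ij} \to V_{ij_0} \to V_{ij_0}/V_{ij} \to 0$ with the flat $R$-module $S$ gives $(V_{ij_0}/V_{ij}) \otimes_R S = (V_{ij_0} \otimes_R S)/(V_{ij} \otimes_R S) = 0$, whence $V_{ij_0}/V_{ij} = 0$ by faithful flatness, i.e.\ $V_{ij} = V_{ij_0}$. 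Thus $(V_{ij})_{j \geq i}$ stabilizes for every $i$, so the dual system of the Lazard presentation is Mittag--Leffler, and $M$ is Mittag--Leffler by Remark \ref{remark-flat-ML}.

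The main obstacle is locating the right statement to descend. The naive route --- base changing the injectivity of $M \otimes_R \bigl(\prod_\alpha Q_\alpha\bigr) \to \prod_\alpha(M \otimes_R Q_\alpha)$ from Proposition \ref{proposition-ML-tensor} --- breaks down because the canonical map $\bigl(\prod_\alpha Q_\alpha\bigr) \otimes_R S \to \prod_\alpha(Q_\alpha \otimes_R S)$ need not be injective even when $S$ is faithfully flat, so one cannot compare the $R$-map with the corresponding $S$-map. Passing instead to the duals of a Lazard presentation replaces these infinite products by images of maps between \emph{finite free} modules, where the relevant ``limit'' data are just submodules of finitely generated modules; for these, the faithfully flat descent step above is immediate.
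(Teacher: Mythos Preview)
Your proof is correct and follows essentially the same route as the paper: descend flatness, apply Lazard's theorem, reduce via Remark \ref{remark-flat-ML} to stabilization of the dual inverse system, identify images after base change using the finite-free Hom isomorphism and flatness of $S$, and then descend the equality of submodules by faithful flatness. Your notation $V_{ij}$ and the explicit short-exact-sequence argument for the descent step are slightly more detailed than the paper's version, and your closing paragraph on why the tensor-product criterion of Proposition \ref{proposition-ML-tensor} does not descend directly is a helpful addition, but the substance is the same.
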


\begin{proof}
By the previous lemma, flatness descends, so $M$ is flat.  Thus by Lazard's theorem (Theorem \ref{theorem-lazard}) we can write $M = \colim_{i \in I} M_i$ where $(M_i,f_{ij})$ is a directed system of free finite $R$-modules.  According to Remark \ref{remark-flat-ML}, to prove $M$ is Mittag-Leffler it is enough to show that $(\Hom_{R}(M_i,R))$ is a Mittag-Leffler inverse system.

Since tensoring commutes with colimits, $M \otimes_{R} S = \colim (M_i \otimes_{R} S)$.  Since $M \otimes_{R} S$ is Mittag-Leffler this means $(\Hom_{S}(M_i \otimes_{R} S, S))$ is a Mittag-Leffler inverse system.  So for every $i \in I$, the family $\text{Im}(\Hom_{S}(M_j \otimes_{R} S, S) \rightarrow \Hom_{S}(M_i \otimes_{R} S, S))$ for $j \geq i$ stabilizes.  Because $M_i$ is free and finite there is a functorial isomorphism $\Hom_{S}(M_i \otimes_{R} S,S) \cong \Hom_{R}(M_i,R) \otimes_{R} S$, and because $R \rightarrow S$ is faithfully flat, tensoring by $S$ commutes with taking the image of a module map.  Thus we find that for every $i \in I$, the family $\text{Im}(\Hom_{R}(M_j, R) \rightarrow \Hom_{R}(M_i, R)) \otimes_{R} S$ for $j \geq i$ stabilizes.  But if $N$ is an $R$-module and $N' \subset N$ a submodule such that $N' \otimes_{R} S = N \otimes_{R} S$, then $N' = N$ by faithful flatness of $S$.  We conclude that for every $i \in I$, the family $\text{Im}(\Hom_{R}(M_j, R) \rightarrow \Hom_{R}(M_i, R))$ for $j \geq i$ stabilizes.  So $M$ is Mittag-Leffler.
\end{proof}

\noindent
At this point the faithfully flat descent of countably generated projective modules follows easily.
\begin{lemma}
\label{lemma-ffdescent-countable-projectivity}
Let $R \rightarrow S$ be a faithfully flat ring map.  Let $M$ be an $R$-module.  If the $S$-module $M \otimes_{R} S$ is countably generated and projective, then $M$ is countably generated and projective. 
\end{lemma}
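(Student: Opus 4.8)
The plan is to reduce everything to results already in hand: the descent of flatness and of the Mittag-Leffler property (Lemmas~\ref{lemma-ffdescent-flatness} and~\ref{lemma-ffdescent-flat-ML}), and the characterization of countably generated projectives as flat Mittag-Leffler modules (Lemma~\ref{lemma-countgen-projective}). The one extra ingredient needed is that countable generation descends along $R \to S$.

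First I would check that $M$ is countably generated. Choose a countable generating set of the $S$-module $M \otimes_R S$; writing each generator as a finite sum of pure tensors and using $m \otimes s = s \cdot (m \otimes 1)$, we may assume these generators have the form $m_n \otimes 1$ with $m_n \in M$, $n \geq 1$. Let $M' = \sum_n R m_n \subseteq M$, a countably generated submodule. Since $R \to S$ is flat, tensoring $0 \to M' \to M \to M/M' \to 0$ with $S$ preserves exactness, and the image of $M' \otimes_R S \to M \otimes_R S$ contains all the $m_n \otimes 1$, hence is everything; therefore $(M/M') \otimes_R S = 0$, and faithful flatness forces $M = M'$. So $M$ is countably generated.

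Next, since $M \otimes_R S$ is projective over $S$, it is flat and Mittag-Leffler over $S$ (a projective module is flat, and is Mittag-Leffler by Examples~\ref{examples-ML}(3)). Hence Lemma~\ref{lemma-ffdescent-flat-ML} applies and shows that $M$ is flat and Mittag-Leffler over $R$. Combining this with the previous paragraph, $M$ is flat, Mittag-Leffler, and countably generated, so Lemma~\ref{lemma-countgen-projective} gives that $M$ is projective. (Equivalently, one may invoke Theorem~\ref{theorem-projectivity-characterization}, viewing the countably generated module $M$ as a direct sum with a single summand.)

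The only step requiring genuine care — the main obstacle, such as it is — is the descent of countable generation: one must justify the reduction to pure-tensor generators and then apply faithful flatness correctly to pass from $(M/M') \otimes_R S = 0$ to $M = M'$. Once that is in place, the rest of the argument is a direct appeal to Lemmas~\ref{lemma-ffdescent-flat-ML} and~\ref{lemma-countgen-projective}.
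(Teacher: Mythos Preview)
Your proof is correct and follows the same route as the paper: descend countable generation, apply Lemma~\ref{lemma-ffdescent-flat-ML} to get flat and Mittag-Leffler, then invoke the characterization of projectivity (Lemma~\ref{lemma-countgen-projective} or Theorem~\ref{theorem-projectivity-characterization}). The only difference is that you spell out the descent-of-countable-generation argument, which the paper simply asserts.
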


\begin{proof}
Follows from Lemma \ref{lemma-ffdescent-flat-ML}, the fact that countable generation descends, and Theorem \ref{theorem-projectivity-characterization}.
\end{proof}

\noindent
All that remains is to use d\'evissage to reduce descent of projectivity in the general case to the countably generated case.  First, two simple lemmas.

\begin{lemma}
\label{lemma-lift-countably-generated-submodule}
Let $R \rightarrow S$ be a ring map, let $M$ be an $R$-module, and let $Q$ be a countably generated $S$-submodule of $M \otimes_{R} S$.  Then there exists a countably generated $R$-submodule $P$ of $M$ such that $\textnormal{Im}(P \otimes_{R} S \rightarrow M \otimes_{R} S)$ contains $Q$.
\end{lemma}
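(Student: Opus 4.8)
\emph{Proof proposal.} The plan is simply to track where the generators of $Q$ come from. First I would choose a countable set $q_1, q_2, \dots$ of $S$-module generators of $Q$, viewed as elements of $M \otimes_R S$. Each $q_n$ is a finite sum of pure tensors, say $q_n = \sum_{k=1}^{\ell_n} m_{n,k} \otimes s_{n,k}$ with $m_{n,k} \in M$ and $s_{n,k} \in S$. The set $\{ m_{n,k} : n \geq 1,\ 1 \leq k \leq \ell_n \}$ is countable, so I let $P \subset M$ be the $R$-submodule it generates; then $P$ is countably generated by construction.

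Next I would observe that $\mathrm{Im}(P \otimes_R S \to M \otimes_R S)$ contains each $q_n$: indeed $m_{n,k} \otimes s_{n,k}$ lies in the image of $P \otimes_R S$ since $m_{n,k} \in P$, and $q_n$ is a finite sum of such elements. Finally, the map $P \otimes_R S \to M \otimes_R S$ is $S$-linear, so its image is an $S$-submodule of $M \otimes_R S$; containing all the $S$-generators $q_n$ of $Q$, it therefore contains $Q$. This gives the desired $P$.

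There is essentially no obstacle here: the only point requiring a word is that it suffices to check the image contains the \emph{generators} of $Q$ rather than all of $Q$, which is immediate once one notes the image is an $S$-submodule. The argument works for an arbitrary ring map $R \to S$, with no flatness needed.
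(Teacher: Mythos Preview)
Your proof is correct and follows exactly the same approach as the paper: pick countable generators of $Q$, express each as a finite sum of pure tensors, and let $P$ be the $R$-submodule of $M$ generated by the (countably many) first tensor factors. The paper's proof is in fact terser than yours, omitting the verification that the image is an $S$-submodule containing the generators, which you spelled out.
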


\begin{proof}
Let $y_1, y_2, \dots$ be generators for $Q$ and write $y_{j} = \sum_{k} x_{jk} \otimes s_{jk}$ for some $x_{jk} \in M$ and $s_{jk} \in S$. Then take $P$ be the submodule of $M$ generated by the $x_{jk}$.
\end{proof}

\begin{lemma}
\label{lemma-adapted-submodule}
Let $R \rightarrow S$ be a ring map, and let $M$ be an $R$-module.  Suppose $M \otimes_{R} S = \bigoplus_{i \in I} Q_i$ is a direct sum of countably generated $S$-modules $Q_i$.  If $N$ is a countably generated submodule of $M$, then there is a countably generated submodule $N'$ of $M$ such that $N' \supset N$ and $\textnormal{Im}(N' \otimes_{R} S \rightarrow M \otimes_{R} S) = \bigoplus_{i \in I'} Q_i$ for some subset $I' \subset I$.
\end{lemma}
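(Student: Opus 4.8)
The plan is to imitate the back-and-forth construction used in the proof of Theorem~\ref{theorem-kaplansky-direct-sum}: I will build $N'$ as a countable increasing union $N' = \bigcup_{k \geq 0} N_k$ of countably generated $R$-submodules of $M$, starting from $N_0 = N$, alternately passing to the $\bigoplus_i Q_i$ side (recording which countably many $Q_i$ are touched by the image of $N_k \otimes_R S$) and coming back to $M$ (enlarging $N_k$ so that the image of $N_{k+1} \otimes_R S$ contains those $Q_i$ outright).

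First I would record two elementary facts. (a) If $P \subset M$ is a countably generated $R$-submodule, then $P \otimes_R S$ is generated over $S$ by the elements $x \otimes 1$ with $x$ ranging over a countable generating set of $P$, so $\text{Im}(P \otimes_R S \to M \otimes_R S)$ is a countably generated $S$-submodule of $\bigoplus_{i \in I} Q_i$; since each such generator has finite support and every element of the image is an $S$-linear combination of finitely many of them, the set $I(P) \subset I$ of indices $i$ for which some element of $\text{Im}(P \otimes_R S)$ has nonzero $i$-th component is countable, and $\text{Im}(P \otimes_R S) \subset \bigoplus_{i \in I(P)} Q_i$. (b) Conversely, for any countable $J \subset I$ the $S$-module $\bigoplus_{i \in J} Q_i$ is countably generated, so by Lemma~\ref{lemma-lift-countably-generated-submodule} there is a countably generated $R$-submodule of $M$ whose image under $-\otimes_R S$ contains $\bigoplus_{i \in J} Q_i$.

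The construction then runs as follows. Put $N_0 = N$. Given $N_k$, set $I_k = I(N_k)$ as in (a), choose by (b) a countably generated $R$-submodule $P_k \subset M$ with $\text{Im}(P_k \otimes_R S \to M \otimes_R S) \supset \bigoplus_{i \in I_k} Q_i$, and set $N_{k+1} = N_k + P_k$; this is again countably generated and contains $N_k$. Let $N' = \bigcup_k N_k$ and $I' = \bigcup_k I_k$. Since the $N_k$ are increasing, so are the submodules $\text{Im}(N_k \otimes_R S)$ and hence the sets $I_k$. Then $N'$ is countably generated and contains $N$. To compute its image I would use that $-\otimes_R S$ commutes with directed colimits, so that $\text{Im}(N' \otimes_R S \to M \otimes_R S) = \bigcup_k \text{Im}(N_k \otimes_R S \to M \otimes_R S)$. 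This is contained in $\bigcup_k \bigoplus_{i \in I_k} Q_i = \bigoplus_{i \in I'} Q_i$, the equality holding because every element of $\bigoplus_{i \in I'} Q_i$ has finite support, which lies in some $I_k$ by increasingness. Conversely $\bigoplus_{i \in I_k} Q_i \subset \text{Im}(P_k \otimes_R S) \subset \text{Im}(N_{k+1} \otimes_R S) \subset \text{Im}(N' \otimes_R S)$ for every $k$, so the same finite-support argument gives $\bigoplus_{i \in I'} Q_i \subset \text{Im}(N' \otimes_R S)$. Hence equality, as desired.

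The construction is essentially routine; the only point requiring care is the bookkeeping that makes the two inclusions in the last step close up, i.e.\ interleaving step (a) (``shrink the target to a countable index set'') and step (b) (``recapture that sub-sum inside a countably generated submodule'') so that the union $N'$ simultaneously has image contained in, and containing, $\bigoplus_{i \in I'} Q_i$. I do not anticipate any genuine obstacle beyond this organization.
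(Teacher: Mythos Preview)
Your proof is correct and follows essentially the same approach as the paper: both build $N'$ as an increasing union of countably generated submodules $N_k$, at each step recording the (countable) set of indices touched by the image of $N_k \otimes_R S$ and then enlarging via Lemma~\ref{lemma-lift-countably-generated-submodule} so that the image of $N_{k+1} \otimes_R S$ contains the corresponding sub-sum. You spell out the verification of the final equality more carefully than the paper does, but the argument is the same.
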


\begin{proof}
Let $N'_0 = N$.  We construct by induction an increasing sequence of countably generated submodules $N'_{\ell} \subset M$ for $\ell = 0,1,2, \dots$ such that: if $I'_{\ell}$ is the set of $i \in I$ such that the projection of $\text{Im}(N'_{\ell} \otimes_{R} S \rightarrow M \otimes_{R} S)$ onto $Q_i$ is nonzero, then $\text{Im}(N'_{\ell+1} \otimes_{R} S \rightarrow M \otimes_{R} S)$ contains $Q_i$ for all $i \in I'_{\ell}$.  To construct $N'_{\ell+1}$ from $N'_{\ell}$, let $Q$ be the sum of (the countably many) $Q_{i}$ for $i \in I'_{\ell}$, choose $P$ as in Lemma \ref{lemma-lift-countably-generated-submodule}, and then let $N'_{\ell + 1} = N'_{\ell} + P$.  Having constructed the $N'_{\ell}$, just take $N' = \bigcup_{\ell} N'_{\ell}$ and $I' = \bigcup_{\ell} I'_{\ell}$.
\end{proof}

\begin{theorem}
\label{theorem-ffdescent-projectivity}
Let $R \rightarrow S$ be a faithfully flat ring map.  Let $M$ be an $R$-module.  If the $S$-module $M \otimes_{R} S$ is projective, then $M$ is projective. 
\end{theorem}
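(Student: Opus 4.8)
The plan is to exhibit a direct sum d\'evissage of $M$ (Definition~\ref{definition-devissage}) whose successive quotients are countably generated \emph{projective} $R$-modules; Lemma~\ref{lemma-direct-sum-devissage} will then present $M$ as a direct sum of projective modules, and a direct sum of projective modules is projective, being a direct summand of a direct sum of free modules. For the setup: $M$ is flat by Lemma~\ref{lemma-ffdescent-flatness}, and since $M\otimes_R S$ is a projective $S$-module, Theorem~\ref{theorem-projective-direct-sum} applied over $S$ gives $M\otimes_R S=\bigoplus_{i\in I}Q_i$ with each $Q_i$ a countably generated projective $S$-module, which I may assume nonzero. Fix a well-ordering of $I$, and for $I'\subseteq I$ write $Q_{I'}=\bigoplus_{i\in I'}Q_i$, a direct summand of $M\otimes_R S$.

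The heart of the argument will be a transfinite recursion constructing an increasing chain $(M_\alpha)$ of $R$-submodules of $M$ and an increasing chain $(I_\alpha)$ of subsets of $I$ with: $M_0=0$, $I_0=\varnothing$; $M_\lambda=\bigcup_{\beta<\lambda}M_\beta$ and $I_\lambda=\bigcup_{\beta<\lambda}I_\beta$ at limits; $\mathrm{Im}(M_\alpha\otimes_R S\to M\otimes_R S)=Q_{I_\alpha}$ for all $\alpha$; and, at each successor step with $I_\alpha\neq I$, the quotient $M_{\alpha+1}/M_\alpha$ countably generated and $I_{\alpha+1}\setminus I_\alpha$ nonempty and countable. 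For the successor step I would set $i_0=\min(I\setminus I_\alpha)$, pass to $\overline M=M/M_\alpha$ (so that $\overline M\otimes_R S\cong(M\otimes_R S)/Q_{I_\alpha}\cong\bigoplus_{i\notin I_\alpha}Q_i$ by right exactness of $\otimes_R S$), use Lemma~\ref{lemma-lift-countably-generated-submodule} to find a countably generated submodule of $\overline M$ whose base-changed image contains the summand $Q_{i_0}$, and then enlarge it by Lemma~\ref{lemma-adapted-submodule} to a countably generated $\overline N\subseteq\overline M$ with $\mathrm{Im}(\overline N\otimes_R S\to\overline M\otimes_R S)=\bigoplus_{i\in J}Q_i$ for some $J\subseteq I\setminus I_\alpha$; necessarily $i_0\in J$, and $J$ is countable because $\bigoplus_{i\in J}Q_i$ is then countably generated with all summands nonzero. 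Finally set $M_{\alpha+1}=M_\alpha+P$, where $P\subseteq M$ is generated by a lift of a countable generating set of $\overline N$; then $M_{\alpha+1}/M_\alpha=\overline N$ is countably generated, a direct computation gives $\mathrm{Im}(M_{\alpha+1}\otimes_R S\to M\otimes_R S)=Q_{I_\alpha}\oplus\bigoplus_{i\in J}Q_i$, and I put $I_{\alpha+1}=I_\alpha\cup J$. The sets $I_{\alpha+1}\setminus I_\alpha$ being nonempty and pairwise disjoint subsets of $I$, the recursion reaches a least stage $\sigma$ with $I_\sigma=I$; then $M_\sigma\otimes_R S\to M\otimes_R S$ is surjective, so $M_\sigma=M$ by faithful flatness, and $(M_\alpha)_{\alpha\le\sigma}$ is a chain, indexed by the ordinal $\sigma+1$, exhausting $M$ and satisfying (0), (1), (2) of Definition~\ref{definition-devissage}.

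To close the argument I would upgrade this chain. For each $\alpha$, $(M/M_\alpha)\otimes_R S\cong\bigoplus_{i\notin I_\alpha}Q_i$ is a direct summand of the projective module $M\otimes_R S$, hence flat, so $M/M_\alpha$ is flat by Lemma~\ref{lemma-ffdescent-flatness}; by Lemma~\ref{lemma-flat-universally-injective} the sequence $0\to M_\alpha\to M\to M/M_\alpha\to 0$ is then universally exact, so $M_\alpha\otimes_R S\to M\otimes_R S$ is injective and identifies $M_\alpha\otimes_R S$ with $Q_{I_\alpha}$. Consequently $(M_{\alpha+1}/M_\alpha)\otimes_R S\cong(M_{\alpha+1}\otimes_R S)/(M_\alpha\otimes_R S)\cong Q_{I_{\alpha+1}}/Q_{I_\alpha}\cong\bigoplus_{i\in I_{\alpha+1}\setminus I_\alpha}Q_i$ is a countably generated projective $S$-module, so $M_{\alpha+1}/M_\alpha$ is countably generated and projective by Lemma~\ref{lemma-ffdescent-countable-projectivity}. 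In particular $0\to M_\alpha\to M_{\alpha+1}\to M_{\alpha+1}/M_\alpha\to 0$ splits, giving property (3), so $(M_\alpha)$ is a direct sum d\'evissage; by Lemma~\ref{lemma-direct-sum-devissage}, $M\cong\bigoplus_{\alpha+1}M_{\alpha+1}/M_\alpha$ is a direct sum of projectives, hence projective. Alternatively, once $M$ is known to be a direct sum of countably generated modules, one could finish with Lemma~\ref{lemma-ML-ui-descent} applied to the natural map $M\to M\otimes_R S$, which is universally injective since $R\to S$ is faithfully flat, its target $M\otimes_R S$ being flat and Mittag-Leffler because projective.

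I expect the recursion to be the main obstacle, and within it the invariant $\mathrm{Im}(M_\alpha\otimes_R S\to M\otimes_R S)=Q_{I_\alpha}$: it is exactly this that makes every $M/M_\alpha$ flat after descent, which is what forces the quotients $M_{\alpha+1}/M_\alpha$ to be projective and lets their base change be computed. Lemmas~\ref{lemma-lift-countably-generated-submodule} and~\ref{lemma-adapted-submodule}, together with the well-ordering of $I$, are precisely what permit one to preserve this invariant while at the same time keeping each new quotient countably generated and ensuring that every $Q_i$ is eventually absorbed, so that the chain exhausts $M$.
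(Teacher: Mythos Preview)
Your proof is correct and follows essentially the same strategy as the paper: build a Kaplansky d\'evissage of $M$ by transfinite recursion, using Lemma~\ref{lemma-adapted-submodule} at each successor step to keep the base-changed filtration aligned with the decomposition $M\otimes_R S=\bigoplus_i Q_i$, and conclude projectivity of the successive quotients via Lemma~\ref{lemma-ffdescent-countable-projectivity}. The only differences are cosmetic: you well-order the index set $I$ (absorbing a new $Q_{i_0}$ at each step) whereas the paper well-orders $M$ itself (absorbing a new element $x\in M$ at each step), and you postpone to a separate ``upgrade'' pass the injectivity of $M_\alpha\otimes_R S\to M\otimes_R S$ that the paper handles inline via flatness of $S$ over $R$.
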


\begin{proof}
We are going to construct a Kaplansky d\'evissage of $M$ to show that it is a direct sum of projective modules and hence projective.  By Theorem \ref{theorem-projective-direct-sum} we can write $M \otimes_{R} S = \bigoplus_{i \in I} Q_i$ as a direct sum of countably generated $S$-modules $Q_i$.  Choose a well-ordering on $M$.  By transfinite induction we are going to define an increasing family of submodules $M_{\alpha}$ of $M$, one for each ordinal $\alpha$, such that $M_{\alpha} \otimes_{R} S$ is a direct sum of some subset of the $Q_i$.

For $\alpha = 0$ let $M_0 = 0$.  If $\alpha$ is a limit ordinal and $M_{\beta}$ has been defined for all $\beta < \alpha$, then define $M_{\beta} = \bigcup_{\beta < \alpha} M_{\beta}$.  Since each $M_{\beta} \otimes_{R} S$ for $\beta < \alpha$ is a direct sum of a subset of the $Q_i$, the same will be true of $M_{\alpha} \otimes_{R} S$.  If $\alpha+1$ is a successor ordinal and $M_{\alpha}$ has been defined, then define $M_{\alpha+1}$ as follows.  If $M_{\alpha} = M$, then let $M_{\alpha +1} = M$.  Otherwise choose the smallest $x \in M$ (with respect to the fixed well-ordering) such that $x \notin M_{\alpha}$. Since $S$ is flat over $R$, $(M/M_{\alpha}) \otimes_{R} S = M \otimes_{R} S/M_{\alpha} \otimes_{R} S$, so since $M_{\alpha} \otimes_{R} S$ is a direct sum of some $Q_{i}$, the same is true of $(M/M_{\alpha}) \otimes_{R} S$.   By Lemma \ref{lemma-adapted-submodule}, we can find a countably generated $R$-submodule $P$ of $M/M_{\alpha}$ containing the image of $x$ in $M/M_{\alpha}$ and such that $P \otimes_{R} S$ (which equals $\text{Im}(P \otimes_{R} S \rightarrow M \otimes_{R} S)$ since $S$ is flat over $R$) is a direct sum of some $Q_{i}$. Since $M \otimes_{R} S = \bigoplus_{i \in I} Q_i$ is projective and projectivity passes to direct summands, $P \otimes_{R} S$ is also projective.  Thus by Lemma \ref{lemma-ffdescent-countable-projectivity}, $P$ is projective.  Finally we define $M_{\alpha+1}$ to be the preimage of $P$ in $M$, so that $M_{\alpha+1}/M_{\alpha} = P$ is countably generated and projective.  In particular $M_{\alpha}$ is a direct summand of $M_{\alpha+1}$ since projectivity of $M_{\alpha+1}/M_{\alpha}$ implies the sequence $0 \rightarrow M_{\alpha} \rightarrow M_{\alpha+1} \rightarrow M_{\alpha+1}/M_{\alpha} \rightarrow 0$ splits.

Transfinite induction on $M$ (using the fact that we constructed $M_{\alpha+1}$ to contain the smallest $x \in M$ not contained in $M_{\alpha}$) shows that each $x \in M$ is contained in some $M_{\alpha}$.  Thus, there is some large enough ordinal $S$ satisfying: for each $x \in M$ there is $\alpha \in S$ such that $x \in M_{\alpha}$.  This means $(M_{\alpha})_{\alpha \in S}$ satisfies property (1) of a Kaplansky d\'evissage of $M$.  The other properties are clear by construction.  We conclude $M = \bigoplus_{\alpha+1 \in S} M_{\alpha+1}/M_{\alpha}$.  Since each $M_{\alpha+1}/M_{\alpha}$ is projective by construction, $M$ is projective.  
\end{proof}

\bibliographystyle{amsplain}
\bibliography{ffdescent}
\end{document}